\newtheorem{theorem}{Theorem}
\newtheorem{corollary}{Corollary}[section]
\newtheorem{lemma}{Lemma}[section]
\newtheorem{prop}{Proposition}[section]
\theoremstyle{remark}
\newtheorem{rem}{Remark}[section]
\newcommand{\Ff}{\mathrm{F}}
\newcommand{\ra}{\rightarrow}
\newcommand{\ql}{{\overline{\mathbb{Q}}_\ell}}
\newcommand{\fl}{{\overline{\mathbb{F}}_\ell}}
\newcommand{\sra}{\twoheadrightarrow}
\newcommand{\hra}{\hookrightarrow}
\newcommand{\rep}{\mathfrak{Rep}}
\newcommand{\id}{\mathrm{Ind}}
\newcommand{\as}{{\lvert-\rvert}}
\newcommand{\abs}{\nu}
\newcommand{\NN}{\mathbb{N}}
\renewcommand*{\det}{\qopname\relax o{det}}
\newcommand{\ZZ}{\mathbb{Z}}
\newcommand{\irr}{\mathfrak{Irr}}
\newcommand{\ain}[3]{#1\in\{#2,\ldots,#3\}}
\newcommand{\ho}{\mathrm{Hom}}
\newcommand{\soc}{\mathrm{soc}}
\newcommand{\De}{\Delta}
\newcommand{\Z}{\mathrm{Z}}
\newcommand{\cc}{^{\mathfrak{c}}}
\newcommand\restr[2]{{\left.\kern-\nulldelimiterspace #1\vphantom{\big|}\right|_{#2}}}
\renewcommand{\abs}[2]{\nu_{#1}^{\frac{#2}{2}}}
\newcommand{\cus}{\mathfrak{C}}
\newcommand{\im}{\mathrm{Im}}
\newcommand{\Ms}{\mathcal{M}\mathrm{ult}}
\newcommand{\fm}{\mathfrak{m}}
\newcommand{\fn}{\mathfrak{n}}
\newcommand{\fd}{\mathfrak{d}}
\newcommand{\rs}{r^\Delta}
\newcommand{\ed}{\mathrm{End}}
\newcommand{\pa}{\mathrm{P}ar}
\newcommand{\ad}{\mathrm{Ad}}
\newcommand{\pp}{\mathfrak{PP}}
\newcommand{\Md}{\mathrm{Mod}}
\newcommand{\md}{\mathrm{mod}}
\newcommand{\rst}{\mathrm{rest}}
\newcommand{\mt}{\mathrm{Mat}}
\newcommand{\fo}{\mathbbm{1}}
\newcommand{\D}{\mathcal{R}ed}
\newcommand{\oltimes}{\overline{\times}}
\newcommand{\sk}{\mathrm{sk}}
\newcommand{\ol}{\overline}
\newcommand{\scu}{\mathfrak{SC}}
\newcommand{\bs}{\backslash}
\newcommand{\len}{\mathrm{len}}
\newcommand{\rk}{\mathrm{rk}}
\newcommand{\CC}{\mathbb{C}}
\newcommand{\otimeslr}{\tensor*[ _R]{\otimes}{_L}}
\date{}
\begin{document}
\title{The $\ell$-modular local theta correspondence in type II and partial permutations}
\author{Johannes Droschl}
\maketitle
\begin{abstract}
    In this paper we compute the multiplicities appearing in the $\fl$-modular theta correspondence in type II over a non-archimedean field $\Ff$, where $\ell$ is a prime not dividing the residue cardinality of $\Ff$.
    Unlike for representations with complex coefficients, highly non-trivial multiplicities can emerge. We show that these multiplicities are precisely governed by the action of symmetric groups on the set of partial permutations, and the $\fl$-representation of symmetric groups these give rise to. The problem is thus reduced to certain branching problems in the modular representation theory of symmetric groups.
    In particular, if $d$ is the order of the residue cardinality of $\Ff$ in $\fl$, and the rank of the involved general linear groups is bounded above by $d\ell$, the behavior of the theta correspondence can be predicted via explicit algorithms coming from Pieri's formula.
\end{abstract}
\section{Introduction}
Let $\Ff$ be a non-archimedean local field with residue cardinality $q$. We fix a prime $\ell$ such that $q$ is invertible in $\fl$ and denote by $d$ the order of $q$ in $\fl$.
In this paper we investigate the $\ell$-modular theta correspondence in type II. Originally developed in \cite{How79}, \cite{How89}, see also \cite{Wal89}, the theta correspondence is one of the main tools in modern local and global representation theory giving rise to lifts of representations of one algebraic group $G$ over $\Ff$ to another one $H$. One of its first applications was the construction of counter-examples to the naive Ramanujan-Petersson conjecture, \cite{HowPia83}, more recent cases being specific constructions of the local Langlands correspondence, \cite{GanTak11}, and the Gan-Gross-Prasad conjectures, \cite{GanIch16}.
The best known instance of the theta correspondence occurs in the case where $G$ is a symplectic group and $H$ is an orthogonal group, or 
\emph{vice versa}, which is known as the theta correspondence in type I.
In this paper, we will however focus on the case of type II, \emph{i.e.} where $G$ and $H$ are two general linear groups. The reason being that the goal of the paper is to understand $\ell$-modular versions of the theta correspondence, since this is the setting in which the $\ell$-modular representation theory is best understood, thanks to the work of \cite{MinSec14} and \cite{Vig96}. We will see that many of the known properties in the case of complex representations, originally proven in \cite{Min08}, break down in interesting ways. Partial results towards modular versions of the theta correspondence have already been obtained in the works of \cite{MosTri}, \cite{Min08}, and \cite{Min12}.
The main motivation for considering the $\ell$-modular versions is that some of the deepest and most surprising arithmetic properties of automorphic forms manifest themselves as certain congruences modulo some prime $\ell$. Those congruences are then directly related to the $\ell$-modular representation theory of the corresponding local representations. Furthermore, the deformation-theory of (local) Galois-representations suggests to us that it might be beneficial to study representations not in a fixed field of coefficients, but rather to vary the coefficients in families, see for example \cite{EmeHel14}, \cite{Hel20}, and \cite{HelMos18}.
An important example being 
representations over $\ol{\ZZ}_\ell$, which specialize to representations over $\ql$ and $\fl$. In particular, one is interested in constructing invariants of representations that behave well in families. So is for example the $\gamma$-factor of representations of the general linear group, see \cite{Min12}, such a well-behaved object, but the standard Godement-Jacquet $L$-factor introduced in the same paper is not. Both these objects are intimately tied to the theta correspondence in type II, as we will try to highlight throughout the remaining introduction.

Let us recall the main players of our story. We let $G_n$ be the $\Ff$-points of the general linear group and $\mt_{n,m}$ the space of $n\times m$-matrices with entries in $\Ff$, on which $G_n\times G_m$ acts by left- and right-translation. For an algebraically closed field $K$, we let $S_K(X)$ be the space of compactly supported and locally constant functions $f\colon X\ra K$. 
For irreducible, smooth $K$-representations $\pi$ of $G_n$ and $\pi'$ of $G_m$ one considers in the theta correspondence in type II the Hom-space
\[\ho(S_K(\mt_{n,m}),\pi\otimes\pi').\]
Howe's duality conjecture, which was originally postulated in \cite{How79}, then asserts the following.
\begin{theorem}[{\cite{Min08}}]
    Assume that $\mathrm{char}(K)=0$, $n\le m$, and let $\pi$ be an irreducible and smooth $K$-representation of $G_n$. Then there exists a unique (up to isomorphism) irreducible, smooth $K$-representation $\pi'$ of $G_m$ with
    $\ho(S_K(\mt_{n,m}),\pi\otimes\pi')\neq 0$. In this case
    \[\dim_K\ho(S_K(\mt_{n,m}),\pi\otimes\pi')=1\]
    and if $n=m$, we have $\pi'\cong \pi^\lor$.
\end{theorem}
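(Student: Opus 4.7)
My approach is to exploit the fact that $G_n\times G_m$ acts on $\mt_{n,m}$ with only finitely many orbits, namely those given by matrices of each fixed rank $k\in\{0,\ldots,n\}$. Stratifying by rank yields a $G_n\times G_m$-equivariant filtration
\[0=F_{-1}\subset F_0\subset\cdots\subset F_n=S_K(\mt_{n,m}),\]
where $F_k$ consists of functions supported on matrices of rank at most $k$, and $F_k/F_{k-1}$ is, via Mackey theory, the compact induction to $G_n\times G_m$ of the trivial representation of the stabilizer $S_k$ of a chosen rank-$k$ representative. The plan is then to analyze each graded piece in turn, starting from the open stratum $k=n$, which will produce the actual lift, and then to control the closed strata $k<n$ in order to prove uniqueness and multiplicity one.

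A direct computation identifies $S_n\subset G_n\times G_m$ (the stabilizer of $(I_n\mid 0)$) with the subgroup of pairs $(g,h)$ such that $h$ belongs to the parabolic $\overline{P}\subset G_m$ of Levi $G_n\times G_{m-n}$ opposite to the standard one, and such that the first Levi-component of $h$ equals $g$. Frobenius reciprocity, combined with the identification of $\overline{N}$-invariants with the Jacquet module, then yields
\[\ho_{G_n\times G_m}(F_n/F_{n-1},\pi\otimes\pi')\;\cong\;\ho_{G_n}\bigl(\pi^\lor,\,(\pi'_{\overline{P}})^{G_{m-n}}\bigr).\]
When $n=m$ the parabolic is trivial and this collapses to $\ho_{G_n}(\pi^\lor,\pi')$, which is one-dimensional exactly for $\pi'\cong\pi^\lor$; this settles existence, multiplicity one, and the self-duality assertion in the equal rank case. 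When $n<m$, Frobenius reciprocity in the reverse direction translates nonvanishing into the condition that $\pi'$ be a constituent of the parabolically induced representation $\pi^\lor\times\mathbbm{1}_{G_{m-n}}$ in a distinguished position, and the Langlands/Zelevinsky classification in characteristic zero pins down a unique such irreducible $\pi'$, yielding the candidate lift together with multiplicity one on the top stratum.

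The main obstacle is to rule out extra contributions from the lower strata $F_k/F_{k-1}$ with $k<n$. An analogous Mackey computation there produces Hom spaces involving simultaneous Jacquet modules of both $\pi$ and $\pi'$ along parabolics of $G_n$ and $G_m$ with matching first Levi block $G_k$, while the second Levi factors contribute via $G_{n-k}$- and $G_{m-k}$-invariants. To conclude that these cannot produce a second copy of $\pi\otimes\pi'$, nor a lift to some $\pi''\not\cong\pi'$, I would follow the inductive strategy of \cite{Min08}: organize by the length of the Zelevinsky multisegment of $\pi$, and use derivative calculus to show that any lower-stratum contribution either forces $\pi$ to have a Jacquet module incompatible with its multisegment, or else reproduces an irreducible quotient already accounted for in $F_n/F_{n-1}$. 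This derivative and multisegment bookkeeping is by far the most delicate step, and is what ultimately secures both the uniqueness of the lift and the multiplicity one statement.
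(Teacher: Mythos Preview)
This theorem is not proved in the paper; it is quoted in the introduction from \cite{Min08} as the characteristic-zero baseline, so there is no argument here to compare against directly. That said, your rank-stratification strategy is precisely the method of \cite{Min08}, and it is also the machinery the present paper adapts in Section~5 to the modular setting, so in spirit you are on the right track.

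Your filtration, however, is set up in the wrong direction, and as written it is vacuous. For $k<n$ the set $\{M\in\mt_{n,m}:\rk(M)\le k\}$ is a proper Zariski-closed subvariety and hence has empty interior in the non-archimedean topology; a locally constant compactly supported function whose support lies in a set with empty interior is identically zero. Thus your $F_k$ is $0$ for every $k<n$, and the filtration collapses to $0\subset S(\mt_{n,m})$. The correct filtration, used in \cite{Min08} and recorded in Section~5.1 of this paper, is by the \emph{open} sets $\mt_{n,m}^{\ge r}=\{M:\rk(M)\ge r\}$: one has subrepresentations $S(\mt_{n,m}^{\ge r})\subseteq S(\mt_{n,m})$, decreasing in $r$, with successive quotients $S(\mt_{n,m}^{r})$. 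The full-rank orbit is then the \emph{smallest} subrepresentation, and one analyzes a morphism $S(\mt_{n,m})\to\pi\otimes\pi'$ by first asking whether it restricts nontrivially to $S(\mt_{n,m}^{\ge n})$.

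Two smaller points. First, the identification of $S(\mt_{n,m}^r)$ with an induced representation carries the modulus twists $\xi_{r,n,m}$ recorded in Section~5.1; you have dropped these, so for $n<m$ the candidate lift is a twist of the induced representation you wrote down, not $\pi^\lor\times\fo_{G_{m-n}}$ on the nose. Second, your final paragraph correctly locates the real difficulty --- excluding contributions from the lower-rank strata --- but only names it; this step is where \cite{Min08} does essentially all of the work, and ``derivative and multisegment bookkeeping'' is an accurate label but not yet an argument.
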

In particular, if $K=\CC$ and $n=m$, the morphism
\[S_\CC(\mt_{n,n})\ra\pi\otimes \pi^\lor\] is constructed as follows. First, one constructs for generic $s\in \CC$ a morphism
\[f_s\colon S_\CC(\mt_{n,n})\ra\pi\as^s\otimes \pi^\lor\as^{-s}\]
and then analytically continues $f_s$ to the whole complex plane. 
After accounting for a possible pole at $s=0$, one can evaluate $f_s$ at $s=0$ and the order of the pole is by definition the order of the pole of the Godement-Jacquet $L$-function at $s=\frac{1-n}{2}$.

We now come to the main contents of the paper and set from now on $K=\fl$. Before stating our main theorem, we need to recall some concepts from the modular representation-theory of the symmetric group $S_n$. We recall firstly that the irreducible representations of $S_n$ are parametrized by $\ell$-restricted partitions of $n$, \emph{i.e.} partitions in which every entry is repeated at most $\ell-1$-times. We denote the set of $\ell$-restricted partitions by $\pa_n^\ell$, $\pa^\ell=\bigcup_{n\in\NN}\pa_n^\ell$, and the representation corresponding to $\lambda\in\pa_n^\ell$ by $D_\lambda$. Our convention will be that $\pa_0^\ell$ consists of the $0$-partition $(0)$ and the empty partition $\emptyset$. 

Next, we consider for $n,m\in\NN$ the set \[\pp_{n,m}\coloneq \{(U,V,f):U\subseteq \{1,\ldots,n\},V\subseteq \{1,\ldots,m\}, f\colon V\ra U\text{ bijective}\}\] of \emph{partial permutations}, on which
the groups $S_n$ and $S_m$ act from the left and the right. For $\lambda\in\pa_n^\ell$ and $\mu\in\pa_m^\ell$ we let
\[d_{\lambda,\mu}\coloneq\dim_\fl\ho(\fl[\pp_{n,m}],D_\lambda {\otimeslr} D_\mu).\]
If $\lambda$ or $\mu$ is the empty partition, we set $d_{\lambda,\mu}=0$, and if both are non-empty and one of them is $0$, we set $d_{\lambda,\mu}=1$.
In \Cref{S:symmetric}, we discuss how to compute $d_{\lambda,\mu}$ if $n,m<\ell$ and remark how in general it is related to branching problems in the theory of symmetric groups.

Let $\irr_n$ be the set of irreducible, smooth $\fl$-representations of $G_n$. In \Cref{S:final} we define via representation-theoretic means a map, called the \emph{skeleton},
\[\sk\colon \irr_n\times\irr_m\ra\pa^\ell\times\pa^\ell.\]
Before we can remark on some properties of this map, we need to introduce the following notation.
Let $r\in \NN$, recall that $d$ is the order of the residue cardinality in $\fl$, and denote by $\Pi_r\coloneq \id_{P_{d,\ldots,d}}^{G_{rd}}(\fo_d\otimes\ldots\otimes\fo_d)$ the normalized parabolic induction of the trivial representation of the Levi-subgroup of the standard parabolic subgroup of $G_{rd}$ corresponding to the partition $(d,\ldots,d)$.
Then the Zelevinsky-classification of irreducible representations of \cite{MinSec14} allows us to associate to any irreducible subquotient $\pi$ of $\Pi_r$ a partition $\lambda$ of $r$, in which case we write $\pi=\Z(\lambda)$. In \Cref{S:move} we prove that $\Z(\lambda)$ is a subrepresentation of the induced representation $\Pi_r$ if and only if $\lambda$ is  $\ell$-restricted. The heart of this section is the following theorem, which we prove using the intertwining operators of \cite{Dat05}.
\begin{theorem}
    Let $r\in\NN$. Then there exists an explicit isomorphism of algebras
    \[T\colon \fl[S_r]\ra\ed(\Pi_r),\]
    where multiplication on $\ed(\Pi_r)$ is given by composition. 
\end{theorem}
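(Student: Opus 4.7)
The plan is to build $T$ using the normalized intertwining operators of \cite{Dat05}, check it respects the Coxeter presentation of $S_r$, and conclude by a dimension count.

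\textbf{Step 1 (dimension count).} By Frobenius reciprocity,
\[\ed(\Pi_r)\cong \ho_{L_r}\bigl(r_{P_r}^{G_{rd}}(\Pi_r),\fo_d^{\boxtimes r}\bigr),\]
where $L_r=G_d^r$ and $P_r=P_{d,\ldots,d}$. The geometric lemma provides a filtration of $r_{P_r}^{G_{rd}}\Pi_r$ indexed by the set of minimal length $(W_{L_r},W_{L_r})$-double coset representatives in $W_{G_{rd}}$, which is naturally $S_r$. Since the inducing representation is trivial on each $G_d$-factor, every graded piece is again isomorphic to $\fo_d^{\boxtimes r}$, so contributes exactly one dimension to the Hom-space. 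Hence $\dim_{\fl}\ed(\Pi_r)=r!=\dim_{\fl}\fl[S_r]$.

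\textbf{Step 2 (construction of $T$).} For each simple transposition $s_i\in S_r$ I would take $T_{s_i}\in\ed(\Pi_r)$ to be the normalized intertwining operator swapping the $i$-th and $(i+1)$-th blocks of $\fo_d$, built from the integral normalization of \cite{Dat05}. Because the inducing data is trivial on $G_d\times G_d$ in each pair of adjacent blocks, these operators are precisely the Hecke algebra generators and satisfy the braid relations for disjoint and adjacent simple reflections. The key local calculation to perform is the quadratic relation
\[(T_{s_i}-1)(T_{s_i}+q^d)=0\]
in $\ed(\Pi_r)$: this is the rank-one computation for $\id_{P_{d,d}}^{G_{2d}}(\fo_d\otimes\fo_d)$, which reduces to a standard Gindikin--Karpelevich type identity that Dat's normalization makes valid integrally, hence after reduction modulo $\ell$.

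\textbf{Step 3 (specialization and isomorphism).} By definition $d$ is the order of $q$ in $\fl^\times$, so $q^d=1$ in $\fl$ and the quadratic relation collapses to $T_{s_i}^2=1$. Together with the braid relations from Step 2, the assignment $s_i\mapsto T_{s_i}$ extends to an algebra homomorphism $T\colon\fl[S_r]\ra\ed(\Pi_r)$. To see $T$ is an isomorphism one only needs injectivity, since the two sides have the same dimension by Step 1; injectivity follows from the fact that, under Frobenius reciprocity, the operator $T_w$ for $w\in S_r$ corresponds to a morphism supported on the open piece of the geometric-lemma filtration associated with the double coset of $w$, so the $T_w$ are linearly independent.

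\textbf{Main obstacle.} The delicate point is the modular quadratic relation in Step 2: the classical normalization of the intertwining operator by a ratio of $L$-factors acquires zeros and poles at $q^d=1$, so a naive reduction mod $\ell$ is not possible. This is precisely where Dat's integral renormalization is required: one must verify that his normalization stays well-defined and unipotent, and that the constants in the Gindikin--Karpelevich identity reduce to give exactly $(T-1)(T+1)$ after specializing $q^d\mapsto 1$. The braid relations and the matching with double cosets are comparatively routine once the quadratic relation is in hand.
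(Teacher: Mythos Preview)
Your approach is essentially the same as the paper's: build $T$ from Dat's normalized intertwining operators, verify the relations for simple reflections, and conclude by a dimension count via Frobenius reciprocity and the Geometric Lemma. Two small points of difference are worth noting.

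First, Step~1 as written only yields $\dim_{\fl}\ed(\Pi_r)\le r!$: left-exactness of $\ho(-,\fo_d^{\boxtimes r})$ applied to the geometric-lemma filtration gives an upper bound, not an equality, unless you also control the relevant $\mathrm{Ext}^1$'s. The paper uses only the inequality, and gets the matching lower bound exactly as you do in Step~3 (linear independence of the $T_w$ via their support in the filtration).

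Second, and more substantively, the paper does not pass through a Hecke-style quadratic relation $(T_{s_i}-1)(T_{s_i}+q^d)=0$; that particular form is not obvious here since $\fo_d$ is not cuspidal, and a Gindikin--Karpelevich computation would have to be redone from scratch in this non-Iwahori setting. Instead the paper computes Dat's invariants directly: using multiplicativity of $\alpha$ and the cuspidal base case one gets $\alpha(\Z(\Delta),\Z(\Delta))=0$, whence $\Lambda(\Z(\Delta),\Z(\Delta))=0$ and $M_{\Z(\Delta),\Z(\Delta)}$ is an isomorphism that is not a scalar; one then simply normalizes it to square to the identity. Once this is done, Dat's multiplicativity (your braid relations) gives the algebra map, and the argument concludes as you describe. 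So your diagnosis of the ``main obstacle'' is right, but the resolution is a computation of $\alpha$ and $\Lambda$ rather than a specialization of a Hecke parameter.
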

Although we do not treat this topic in this paper, we believe that the above theorem can be used to give an alternative proof of the classification in \cite{MinSec14}, completely bypassing the theory of Iwahori-Hecke algebras.

We now come to the promised properties of $\sk$.
\begin{enumerate}
    \item The image of $\sk(\irr_n,\irr_m)$ consists of partitions of at most $\frac{n}{d}$ in the first, and at most $\frac{m}{d}$ in the second component.
    \item If $\lambda,\mu\in \pa^\ell$, then $\sk(\Z(\lambda),\Z(\mu))=(\lambda,\mu)$.
    \item Let $\pi_1,\pi_2\in\irr_m$ be such that the first components of $\sk(\pi,\pi_1),\, \sk(\pi,\pi_2)$ are different from $\emptyset$. Then the first components agree, and we will denote them by $\sk_m(\pi)$. If no $\pi'\in\irr_m$ exists such that the first component of $\sk(\pi,\pi')$ is different from $\emptyset$, we set $\sk_m(\pi)=\emptyset$. 
    \item If $m=m'\mod d$, $\sk_m(\pi)=\sk_{m'}(\pi)$. 
\end{enumerate}
Furthermore, if $d=1,$ (2) allows us to compute $\sk$ in terms of the Zelevinsky-classification, and hence by the Mullineux correspondence, in terms of the Langlands classification.
At the moment we cannot offer a general algorithm to compute $\sk_m$ in terms of the Zelevinsky-classification in the case $d>1$.

In \Cref{S:finalbig} we then prove the main theorem of the paper.
\begin{theorem}
    Let $\pi\in \irr_n$, $\pi'\in \irr_m$. Then
    \[\dim_\fl\ho(S_\fl(\mt_{n,m}),\pi\otimes\pi')=d_{\sk(\pi,\pi')}.\]
\end{theorem}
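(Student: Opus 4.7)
My plan is to set up parallel rank-stratifications on the two sides of the equality and match them through the algebra isomorphism $T\colon\fl[S_r]\cong\ed(\Pi_r)$ of the preceding theorem. On the theta side, $\mt_{n,m}$ stratifies by matrix rank, giving a $G_n\times G_m$-equivariant filtration $F_r=S_\fl(\mt_{n,m}^{\le r})$ whose successive quotients are compactly induced from the stabiliser in $G_n\times G_m$ of a fixed rank-$r$ matrix. On the symmetric group side, $\pp_{n,m}$ stratifies by $|U|=|V|$, giving a layerwise decomposition of $\fl[\pp_{n,m}]$ as an $(S_n,S_m)$-bimodule. In both pictures the running index is the rank $s$, and the goal is a term-by-term comparison.

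\textbf{Theta side.} By Frobenius reciprocity, $\ho(F_r/F_{r-1},\pi\otimes\pi')$ computes a Hom between the Jacquet modules of $\pi$ and $\pi'$ along the maximal parabolics of type $(r,n-r)$ and $(r,m-r)$, glued along the common $G_r$ factor inherited from the stabiliser. Using the description of Zelevinsky-type subrepresentations of $\Pi_\bullet$ and the definition of $\sk$ from \Cref{S:final}, I would argue that only ranks of the form $r=sd$ contribute, and that the surviving piece of each Jacquet module is a $\Pi_s$-isotypic component whose multiplicity space is determined by $\sk_m(\pi)$ and $\sk_n(\pi')$. The isomorphism $T$ then endows each such multiplicity space with a canonical $\fl[S_s]$-module structure, yielding
\[\dim_\fl\ho(F_{sd}/F_{sd-1},\pi\otimes\pi')=\dim_\fl\ho_{\fl[S_s]}\bigl(M_s(\pi),N_s(\pi')\bigr)\]
for explicit $\fl[S_s]$-modules $M_s(\pi),N_s(\pi')$ built from the Jacquet data.

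\textbf{Symmetric group side and matching.} Independently, the rank stratification of $\fl[\pp_{n,m}]$ identifies the rank-$s$ layer with the bimodule $\fl[S_n]\otimes_{\fl[S_s\times S_{n-s}]}\fl[S_s]\otimes_{\fl[S_s\times S_{m-s}]}\fl[S_m]$. Two applications of Frobenius reciprocity produce the parallel formula
\[d_{\sk(\pi,\pi')}=\sum_{s\ge 0}\dim_\fl\ho_{\fl[S_s]}(M'_s,N'_s),\]
with $M'_s,N'_s$ permutation-type $\fl[S_s]$-modules attached to the partitions $\lambda,\mu$ defining $\sk(\pi,\pi')$. The remaining task is the identification $M_s(\pi)\cong M'_s$, $N_s(\pi')\cong N'_s$, which ultimately rests on the compatibility of $T$ with the Weyl-group action permuting $d$-blocks inside $G_{sd}$, together with the combinatorics of parabolic induction encoded by the Zelevinsky classification.

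\textbf{Main obstacle.} The delicate step is converting the rank filtration into a genuine sum, i.e.\ showing that extensions between graded pieces do not inflate or collapse the total Hom. In characteristic zero with $n\le m$ only one rank contributes and this is automatic; in the modular setting several ranks can contribute simultaneously. I would handle this via the $\ell$-block decomposition of $\rep(G_n\times G_m)$: the results of \Cref{S:move} suggest that the $\Pi_s$-isotypic pieces across different $s$ live in pairwise orthogonal blocks, which forces the relevant $\mathrm{Ext}^1$-groups between distinct graded layers to vanish and yields the required strict additivity in $s$.
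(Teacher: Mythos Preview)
Your proposal has two genuine gaps that prevent it from going through as stated.

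First, you bypass the recursive definition of $\sk$ entirely. The map $\sk$ is defined in \Cref{S:final} via two preliminary reductions: Step~1 strips off the $\rho$-reduced part of $\pi$ and $\pi'$ using the derivative calculus of \Cref{T:reduc1}, and Step~2 applies the Fourier transform duality of \Cref{T:reduc2}. Only after both reductions do $\pi$ and $\pi'$ become subrepresentations of some $\Pi_{n'}$ and $\Pi_{m'}$, which is the situation in which your ``only ranks $r=sd$ contribute'' and ``the surviving piece is $\Pi_s$-isotypic'' claims are even meaningful. For a generic $\pi\in\irr_n$ the Jacquet module $r_{(r,n-r)}(\pi)$ need not have any $\Pi_s$-isotypic component at all, and ranks not divisible by $d$ can perfectly well contribute to $\ho(F_r/F_{r-1},\pi\otimes\pi')$. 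The paper uses Steps~1 and~2 precisely to force the problem into the saturated regime where \Cref{T:sec4} applies; your direct stratification argument does not explain how to recover these reductions.

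Second, your handling of the extension problem is incorrect. You propose that the $\Pi_s$-isotypic pieces for different $s$ lie in pairwise orthogonal $\ell$-blocks, forcing the relevant $\mathrm{Ext}^1$ groups to vanish. But in the saturated case every $\Pi_s$ has cuspidal support a multiple of the trivial character of $G_d$, so all of these representations live in the \emph{same} block of $\rep(G_n)$ (and likewise on the $G_m$ side). There is no block orthogonality to invoke, and in fact the filtration by rank is genuinely non-split in general. The paper circumvents this not by an $\mathrm{Ext}$-vanishing argument but by explicitly constructing, for each $r$, an integral operator $T_r\colon S(\mt_{n,m})\to \id(\fo\otimes\fo)$ (see \Cref{S:fourier}) that kills $S(\mt_{n,m}^{\ge r+1})$ and whose induced map on $S(\mt_{n,m}^r)$ has image exactly the piece computed in \Cref{T:sec4}. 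These maps provide explicit liftings of morphisms from the graded pieces back to $S(\mt_{n,m})$, giving the lower bound that matches the upper bound coming from the filtration.
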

In particular,  if $\ell d>\max(n,m)$ one can explicitly compute the multiplicities using Pieri's formula.
The main techniques we use are the ones developed in \cite{Dro25II}, together with some Fourier-theoretic techniques, which were inspired by the original proof of Howe duality in type II in \cite{Min08}.

We return now for a moment to the $L$-functions mentioned at the beginning of the introduction and end this introduction on a speculative note. It is well known that they do not behave well in families, and in fact it is the $\gamma$-factors of Godement-Jacquet $L$-functions that are better behaved. For example, assume $n=dr$. Then
the $L$-function of the trivial $\ql$-representation $\widetilde{\fo_n}$, $L(T,\widetilde{\fo_n})$ has simple poles at $T=q^{\frac{-n-1+2i}{2}},\, \ain{i}{1}{n}$ and the coefficient-wise reduction $\mod\ell$, $r_\ell(L(T,\widetilde{\fo_n}))$, has a pole of order $r$ at $T=q^{\frac{1-n}{2}}$.
But on the other hand, if $\fo_n$ is the trivial $\fl$-representation of $G_n$, the $L$-function $L(T,\fo_n)$ has a simple pole at $T=q^{\frac{1-n}{2}}$ if $d>1$ and no pole if $d=1$, see \cite{Dro24}.
It seems that the poles of $L(T,\widetilde{\fo_n})$ wandered off into extra morphisms in $\ho(S_\fl(\mt_{n,n}),\fo_n\otimes\fo_n)$.
Indeed, the results of \cite[Theorem 3]{Dro25II}, together with \Cref{L:int2}, hint that the following modification of $L(T,\pi)$ might be better behaved.
\[L^\theta(T,\pi)\coloneq \prod_{i\in \ZZ/(d\ZZ)}(1-q^{-\frac{1-n+2i}{2}}T)^{-d_i(\pi)}L(T,\pi),\] where\[ d_i(\pi)\coloneq \dim_\fl\ho(S(\mt_{n,n}),\as^{i}\pi\otimes\as^{-i}\pi^\lor)-1.\]
In particular, it follows that $L^\theta(T,\pi)$ satisfies the same functional equation as $L(T,\pi)$ by \Cref{T:reduc2} and if $n<d$ it agrees with the standard Godement-Jacquet $L$-factor.
We hope to explore this more in depth in future work. 
\subsection*{Acknowledgements}
I am grateful to Hengfei Lu for his valuable feedback and for allowing me to double-check my results against previous computations conducted by him. I would also like to thank Corina Ciobotaru and Alberto M{\'i}nguez for the insightful discussions on earlier versions of this manuscript. The author was supported by a research grant (VIL53023) from VILLUM FONDEN.
\section{The setup}
\numberwithin{theorem}{section}
 We fix for the rest of the paper a prime $\ell$ such that $q$ is invertible in $\fl$ and denote the order of $q$ in $\fl$ by $d$. We also normalize the choice of an absolute value $\lvert-\lvert$ on $\Ff$ by demanding that $\lvert\varpi\rvert=q^{-1}$, where $\varpi$ is a uniformizer of $\Ff$.
\subsection{Partitions}
Let us fix our conventions regarding partitions and compositions. For us a composition $\lambda$ of a natural number $n$ will be an ordered tuple $(\lambda_1,\ldots,\lambda_k)$ with entries some positive integers such that 
\[\sum_{i=1}^k\lambda_i=n.\] The opposite composition $\ol{\lambda}$ is defined as $(\lambda_k,\ldots,\lambda_1)$. For $k\in \ZZ_{>0}$, we denote by $k\lambda$ the composition of $nk$ given by entrywise multiplication.
If $\mu=(\mu_1,\ldots,\mu_l)$ is a second composition of $n$, we write $\mu\ge \lambda$ if for all $\ain{i}{1}{\min(k,l)}$ 
\[\sum_{j=1}^i\mu_j\ge \sum_{j=1}^i\lambda_j.\]

If $\lambda_1\ge\ldots\ge\lambda_k$, we refer to the composition $\lambda$ as a partition. We denote by $\pa_n$ the set of partitions of $n$ and $\pa=\bigcup_{n\in\NN}\pa_n$. For our purposes we let $\pa_0$ be a set with two elements which we denote by $0$ and $\emptyset$.
\subsection{Partial permutations and the symmetric group}\label{S:symmetric}
We start by investigating what will turn out to be the core of our considerations, namely the symmetric group, its $\fl$-representations and its action on the set of partial permutations.
This will be a brief summary of the results relevant to us. For a reader interested in a more exhaustive treatment, we refer for example to \cite{GorKer81}. 

Let $S_n$ be the $n$-th symmetric group. Its generators are given by elementary transpositions and are denoted by $s_1,\ldots,s_{n-1}$. We let $\len\colon S_n\ra \NN$ denote the length function sending $w=s_{i_1}\ldots s_{i_k}\,,s_{i_j}\in\{s_1,\ldots,s_{n-1}\}$ to $k$ if $s_{i_1}\ldots s_{i_k}$ is a reduced expression of $w$.

For $\alpha$ a composition of $n$, we consider the standard subgroup $S_\alpha=S_{\alpha_1}\times\ldots\times S_{\alpha_k}$ and $\fl[S_\alpha]$ the associated algebra.
We let $\Md_L^\alpha$ and $\Md_R^\alpha$ be the category of finite dimensional left- respectively right-modules of $\fl[S_\alpha]$ and denote the induction functor
\[\id_\alpha\colon \Md_*^\alpha\ra\Md_*^n,\,*\in\{L,R\},\]
with right-adjoint the restriction functor.
Similarly, if $\beta$ is a second composition of $m$, we let $\Md^{\alpha,\beta}$ be the category of finite dimensional left-right modules of $\fl[S_\alpha]\otimes \fl[S_\beta]$ and denote the induction functor
\[\id_{\alpha,\beta}\colon \Md^{\alpha,\beta}\ra\Md^{n,m},\]
and again with right-adjoint the restriction functor.
Finally, we denote as usual the tensor-products
\[\otimeslr\colon \Md_L^\alpha\times \Md_R^\beta\ra \Md^{\alpha,\beta}.\]
If $(k,n-k)$ is a composition of $n$, we note that there is an isomorphism of $S_{k,n-k}$-modules $\fl[S_n]\cong \binom{n}{k}\cdot \fl[S_{k,n-k}]$.
The isomorphism is given as follows. Any $w\in S_n$ can be written uniquely as $w=vu$, where $u\in S_{k,n-k}$ and $v$ is the unique element of minimal length in $wS_{k,n-k}$. Next we number the $\binom{n}{k}$ $S_{k,n-k}$-cosets and send an element $w\in w_iS_{k,n-k}$ to the element $u$ in the $i$-th copy of $S_{k,n-k}$.

Let \[\pp_{n,m}\coloneq \{(U,V,f):U\subseteq \{1,\ldots,n\},V\subseteq \{1,\ldots,m\}, f\colon V\ra U\text{ bijective}\}\] be the set of \emph{partial permutations}.
The groups $S_n$ and $S_m$ act from the left and the right and
the orbits are given precisely by $\pp_{n,m}^r,\, \ain{r}{0}{\min(n,m)}$, where \[\pp_{n,m}^r=\{(U,V,f):\#U=\#V=r\}.\]

We thus obtain a left-$\fl[S_n]$, right-$\fl[S_m]$ module \[\fl[\pp_{n,m}]=\bigoplus_{r=0}^{\min(n,m)}\fl[\pp_{n,m}^r].\]
Moreover, it is not hard to see that
\[\fl[\pp_{n,m}^r]\cong \id_{(r,n-r),(m-r,r)}(\fo_{n-r}\otimeslr  \fl[S_r]{}_{L}\otimes_R\fo_{m-r}).\]

We recall that irreducible representations of $S_n$ are parametrized by $\ell$-restricted partitions $\lambda$ of $n$, \emph{i.e.}, partitions $(\lambda_1,\ldots,\lambda_k)$ such that $\lambda_i$ appears at most with multiplicity $\ell-1$. We denote this set by $\pa_n^\ell$ and $\pa^\ell=\bigcup_{n\in\NN}\pa_n^\ell$.
We denote the representation associated to $\lambda\in\pa^\ell$ by $D_\lambda$. The trivial representation corresponds to the trivial partition $(n)$ for any $n$. To $\lambda$, we can also associate its Specht-module $Sp_\lambda$, which admits $D_\lambda$ as a unique quotient, and is hence generated by a single element. Note that if $n<\ell$, $Sp_\lambda=D_\lambda$. Furthermore, we note that $Sp_\lambda$ is a subrepresentation of \[M_\lambda\coloneq \id_\lambda (\fo_{\lambda_1}\otimes\ldots\otimes \fo_{\lambda_k}).\] Finally, $D_\mu$ appears in the composition series of $M_\lambda$ if and only if $\mu\le \lambda$ and $D_\lambda$ appears with multiplicity $1$. We denote the left- and right-module of $\fl[S_n]$ corresponding to a representation with the same letter.

Next, for $M\in \Md_L^\alpha$ we denote by $\widetilde{M}$ the right-module obtained by twisting the action of $S_\alpha$ by $w\mapsto w^{-1}$ and vice versa. Note that we have $\widetilde{D_\lambda}\cong D_\lambda$ and $\widetilde{Sp_\lambda}\cong Sp_\lambda$.

In the latter parts of the paper the following multiplicities will play a crucial role. We assume without loss of generality that $n\le m$.
Let $\lambda$ be a partition of $n$ and $\mu$ a partition of $m$. We then set for $\ain{r}{0}{n}$
\[d_{\lambda,\mu}^r=\dim_\fl\ho(\fl[\pp_{n,m}^r],D_\lambda\otimeslr D_\mu),\]
\[d_{\lambda,\mu}=\sum_{r=0}^{n}d_{\lambda,\mu}^r=\dim_\fl\ho(\fl[\pp_{n,m}],D_\lambda\otimeslr D_\mu).\]
If $\lambda$ or $\mu$ is equal to $0$ and the other does not equal $\emptyset$, we set $d_{\lambda,\mu}\coloneq 1$, and if one of them is $\emptyset$, then $d_{\lambda,\mu}\coloneq 0$.
As an example, note that if $\lambda=\mu=(n)$,
\[d_{\lambda,\mu}^r=1,\, d_{\lambda,\mu}=n+1.\]

Let us note that if $n\ge \ell$, the computation of these multiplicities is an open problem and only in the case $r=n-1$ and $n=m$ and $r=0$ and $n=1=m$ an explicit answer is known, see \cite{KleI}, \cite{KleII}, \cite{KleIII}, \cite{KleIV}. 
In particular, if $n+1=m$ then $d_{\lambda,\mu}\le 1$. We can obtain explicit upper bounds if $\ell$ is large enough, that is in the semi-simple range, \emph{i.e.} $m<\ell$. Then it is possible to compute these multiplicities using the following two ingredients.
   Let $\lambda$ be a partition of $n-k\ge 0$ with entries $\lambda_1\ge\ldots\ge\lambda_r>0$.
    Let $\lambda+k$ be the set of all partitions $\mu$ of $n$ with entries $\mu_1\ge\ldots\ge\mu_{r}\ge \mu_{r+1}\ge 0$ such that for all $i$ $\mu_i\ge \lambda_i$ and $\lambda_i\ge \mu_{i+1}$.
\begin{theorem}[Pieri's formula, (\emph{cf}. \cite{macdonald1995})]
    Assume $k\le n\in \NN, \,n< \ell$ and let $\lambda=(\lambda_1,\ldots,\lambda_m)$ be a partition of $n-k$.
    Then \[\id_{k,n-k}(\fo_k\otimes D_\lambda)=\bigoplus_{\mu\in \lambda+k}D_\mu.\]
\end{theorem}
Secondly, we recall that if $n<\ell$ we have
\[\fl[S_n]=\bigoplus_{\lambda\in \pa_n} D_\lambda \otimeslr  D_\lambda.\] More generally, we have that for any  $\lambda,\mu\in\pa_n^\ell$
\[\ho_\fl( D_\lambda \otimeslr  D_\mu,\fl[S_n])=\begin{cases}
    1&\mu=\lambda,\\
    0&\text{otherwise.}
\end{cases}\]
These two ingredients allow us to compute explicitly for any two partitions $\lambda$ and $\mu$ of $n,m< \ell$ the quantities $d_{\lambda,\mu}^r$ and $d_{\lambda,\mu}$. At the moment we do not know whether there exists a more efficient way of computing these coefficients than pure brute-force.
In \Cref{S:move}, we will need the following definitions.

For $*\in\{L,R\}$, we let $\md_*^n$ be the category consisting of left respectively right submodules of $\fl[S_n]$ which are generated by one element. A morphism in this category is a morphism of modules preserving the inclusion.
In particular the module $\fl[S_n]$ is contained in both $\md_L^n$ and $\md_R^n$.
Moreover the image of a map between such two modules is again contained in $\md_*^n$. For $\alpha$ a composition of $n$, we can define analogously the category $\md_*^\alpha$ consisting of submodules of $\fl[S_\alpha]$ with a generator and define the induction functor 
\[\id_\alpha\colon \md_*^\alpha\ra\md_*^n.\]
Note that every simple (left or right) module $M$ of $\fl[S_n]$ can be embedded in $\fl[S_n]$, hence this embedding gives rise to a corresponding object in $\md_L^n$ and $\md_R^n$.
\subsection{Representation theory of $G_n$}\label{S:reptheor}
We start with some generalities on the structure of $G_n$. For $\alpha=(\alpha_1,\ldots,\alpha_k)$ a composition of $n$, we let $P_\alpha$ be the parabolic subgroup containing the upper-diagonal matrices with Levi-subgroup $M_\alpha\cong G_\alpha\coloneq G_{\alpha_1}\times\ldots\times G_{\alpha_k}$ and unipotent part $N_\alpha$. If $P$ is a parabolic subgroup of $G_n$, we denote by $\overline{P}$ the opposite parabolic subgroup of $G_n$ and we note that $\overline{P_\alpha}$ is conjugate to $P_{\ol{\alpha}}$.
Assume $\alpha$ is of the form \[\alpha=\overbrace{(d,\ldots,d)}^r.\] We then note that the $P_\alpha\times P_\alpha$-orbits on $G_n$ are 
parametrized by $S_r$ and for an element $w\in S_r$ we fix a representative $w_d$ of the corresponding orbit, which is the permutation matrix of $w$, where each $1$ has been replaced with the identity matrix of $G_d$.

Let $G\subseteq G_n$ be a closed subgroup. We denote by $\rep(G)$ the category of smooth, $\fl$-representations of finite length of $G$. The trivial representation is denoted by $\fo_G$ and $\fo_n$ if $G=G_n$. Note that the trivial representation of $S_n$ is denoted by the same letter, but we hope that it is clear from the context which one we are referring to.
If $\alpha=(\alpha_1,\ldots,\alpha_k)$ is a composition of $n$ we write $\rep_\alpha=\rep(G_\alpha)$, $\irr_\alpha$ for the set of isomorphism classes of irreducible representations in $\rep_\alpha$ and set \[\rep\coloneq\bigcup_{n\in \NN}\rep_n,\, \irr\coloneq\bigcup_{n\in \NN}\irr_n.\]
If $H\subseteq G\subseteq G_n$ are closed subgroups, we denote the functor of normalized, compactly supported induction by $\id_H^G$. 
We recall the normalized Jacquet functor and parabolic induction corresponding to parabolic subgroups $P$ of $G_n$ with Levi-subgroup $M$,
which give rise to the exact functors
\[r_P \colon \rep_n\ra \rep(M),\, \id_{P}^{G_n}\colon \rep(M)\ra \rep_n.\]
We write $r_\alpha\coloneq r_{P_\alpha}$ and $\ol{r_\alpha}\coloneq r_{\ol{P_\alpha}}$. 
Recall that by Frobenius reciprocity respectively Bernstein reciprocity $r_{\alpha}$ respectively $\ol{r_\alpha}$ is the left adjoint respectively right adjoint of $\id_{P_\alpha}^{G_n}$ and $(r_{\alpha}(-))^\lor=\ol{r_{\alpha}}((-)^\lor)$. By abuse of notation we will also notate the maps they induce between the respective Grothendieck groups by the same letters.
As is convention, we will write
\[\pi_1\times\ldots\times\pi_k\coloneq \id_{P_\alpha}^{G_n}(\pi_1\otimes\ldots\otimes\pi_k),\, \pi_1\oltimes\ldots\oltimes\pi_k\coloneq \id_{\overline{P_\alpha}}^{G_n}(\pi_1\otimes\ldots\otimes\pi_k).\]
For $\chi$ a smooth $\fl$-character of $\Ff^\times$, we denote by
\[\chi_n\coloneq\chi\circ \det_n\colon G_n\ra\fl\]
and set $\nu_n\coloneq \lvert-\rvert_n.$ 
If $\chi$ is a smooth character of $\Ff^\times$ and $\pi\in \rep_n$, we denote by $\chi\pi=\chi_n\otimes\pi$.
We set for $\pi\in \rep_n$, the degree $\deg(\pi)=n$ and denote injective respectively surjective maps in $\rep_n$ by $\hra$ and $\sra$.
Furthermore, we consider the involution $g\mapsto {}^tg^{-1}$ and denote the twist of a representation $\pi$ by this involution by $\pi\cc$. If $\ell\neq 2$ and $\pi$ is irreducible, then $\pi\cc\cong \pi^\lor$, see \cite{BerZel76}, \cite{Vig96}. As a consequence, we obtain for irreducible representations $\pi,\pi_1,\ldots,\pi_k\in\irr$ an equality
\[\dim_\fl\ho(\pi,\pi_1\times\ldots\times\pi_k)=\dim_\fl\ho(\pi_k\times\ldots\times\pi_1,\pi)\] if $\ell\neq 2$.

If $\pi\in \rep_n$, we denote the corresponding element in the Grothendieck group of $\rep_n$ by $[\pi]$.
Finally, we recall that parabolic induction on $G_n$ is commutative on the level of Grothendieck groups, in particular for $\pi,\pi'\in \irr$ such that $\pi\times\pi'$ is irreducible, $\pi\times\pi'\cong\pi'\times \pi$, see \cite[1.16]{Vig96} for $\ell>2$ and \cite[Proposition 2.6]{MinSec14} in general.
If $n\in \NN,\pi\in \rep$ we write
\[\pi^{\times n}=\overbrace{\pi\times\dots\times\pi}^n.\]
\subsection{Geometric Lemma of Bernstein and Zelevinsky}
Let $P=MU$ and $Q=NV$ be two parabolic subgroups of $G_n$. We recall the description of the functor $r_{Q}\circ \id_P^G\colon \rep(M)\ra \rep(N)$.  Define for $\mathfrak{w}\in P\bs G_n/Q$ the groups
 \[M'\coloneq M\cap w^{-1}Nw,\, N'\coloneq wM'w^{-1},\,V'\coloneq M\cap w^{-1}Vw,\, U'\coloneq N\cap w Uw^{-1},\]
where $w$ is a representative of $\mathfrak{w}$. Let \[\delta_1\coloneq \delta_{U}^{\frac{1}{2}}\cdot\delta_{U\cap w^{-1}Qw}^{-{\frac{1}{2}}},\, \delta_2\coloneq\delta_{V}^{\frac{1}{2}}\cdot\delta_{V\cap wPw^{-1}}^{-{\frac{1}{2}}}\] be characters of $M'$ respectively $N'$. Finally, let $\mathrm{Ad}(w)\colon \rep(M')\ra \rep(N')$ be the pullback by conjugation by $w$ and set $\delta\coloneq \delta_1w^{-1}(\delta_2)$.
Order the $Q$-orbits of $P\bs G_n$ as  $\mathfrak{w}_1,\ldots, \mathfrak{w}_l$ such that $\overline{{\mathfrak{w}_i}}=\bigcup_{i\le j}{\mathfrak{w}_j}.$

 Define for $\mathfrak{w}\in P\bs G_n/Q$ the functor \[F(\mathfrak{w})\coloneq \id_{N'U'}^{N}\circ \mathrm{Ad}(w)\circ \delta\circ r_{V'}\colon \rep(M)\ra\rep(N).\]
Then the following holds.
\begin{lemma}[\cite{BerZel77} Lemma 2.11]
The functor \[F\coloneq r_Q\circ \id_P^G\colon \rep(M)\ra\rep(N)\] has a filtration $0=F_0\subseteq F_1\subseteq F_2\subseteq\ldots\subseteq F_l=F$ with subquotients $F_{i-1}\bs F_i\cong F(\mathfrak{w}_i)$. 
\end{lemma}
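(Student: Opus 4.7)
The plan is to realize the composite functor $r_Q \circ \id_P^G$ geometrically, as coinvariants of a space of sections on $G$, and to obtain the filtration from the closed/open stratification of $G$ by the double cosets $P \backslash G / Q$. Since $\id_P^{G}\sigma$ is by definition the space of smooth, compactly supported sections of the $G$-equivariant line bundle on $P\backslash G$ associated to $\sigma \otimes \delta_U^{1/2}$, applying $r_Q$ amounts to taking (normalized) coinvariants under the unipotent radical $V$ and viewing the result as a smooth representation of $N = M_Q$. Concretely, one obtains sections over the quotient space $P\backslash G/V$, which is naturally stratified by the $N$-orbits coming from the double cosets $P \backslash G / Q$; the chosen ordering $\mathfrak{w}_1,\ldots,\mathfrak{w}_l$ with $\overline{\mathfrak{w}_i}=\bigcup_{j\ge i}\mathfrak{w}_j$ ensures that each partial union $\bigcup_{j\le i}\mathfrak{w}_j$ is open, and therefore extension-by-zero gives the required filtration $F_\bullet$ of $F$ by $N$-subrepresentations.

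Next, I would identify each subquotient $F_{i-1}\backslash F_i$ with $F(\mathfrak{w}_i)$ by localizing to a single double coset. For a fixed representative $w = w_i$, pulling back along the orbit map yields sections over the single orbit $P w Q/V \cong N \cdot (Pw/V)$, which by a classical Mackey argument are isomorphic to the (compactly supported, unnormalized) induction from the stabilizer in $N$ of the base point $Pw$. A direct calculation shows that this stabilizer is $(wM'w^{-1})\cdot U' = N'U'$, where $M' = M\cap w^{-1}Nw$ and $U' = N\cap wUw^{-1}$ as in the statement, and that the fiber on which it acts is, unnormalizedly, the pullback via $w$ of the $V'$-coinvariants of $\sigma$, i.e. $\ad(w) \circ r_{V'}\sigma$ twisted by the appropriate modulus character.

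The last step is then accounting for the normalizations that turn the unnormalized induction and Jacquet functors into their normalized counterparts $\id_P^G$ and $r_Q$. Each of these normalizations contributes a factor of the square root of a modulus character: from $\id_P^G$ one picks up $\delta_U^{1/2}$, from $r_Q$ one picks up $\delta_V^{-1/2}$, and from the Mackey-type identification with induction from $N'U'$ (replacing the full unipotent $V$ by $U'$) one must subtract $\delta_{U\cap w^{-1}Qw}^{1/2}$ and $\delta_{V\cap wPw^{-1}}^{1/2}$. Collecting these yields exactly the characters $\delta_1$ on $M'$ and $\delta_2$ on $N'$, which combine into the character $\delta = \delta_1\cdot w^{-1}(\delta_2)$ appearing in the definition of $F(\mathfrak{w}_i)$.

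The main obstacle is the bookkeeping in the last paragraph: one must verify that the various modulus characters truly combine into $\delta_1$ and $w^{-1}(\delta_2)$, which requires a careful decomposition of $U$ and $V$ according to the intersection pattern with $wPw^{-1}$ and $w^{-1}Qw$, together with the identity $\delta_{U} = \delta_{U\cap w^{-1}Vw}\cdot \delta_{U\cap w^{-1}Nw}$ and its analogue for $V$. The stratification and Mackey steps are essentially formal once the ordering of the double cosets is fixed; the modulus character calculation is the only genuinely arithmetic piece.
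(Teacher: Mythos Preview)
The paper does not give its own proof of this lemma: it is stated with a citation to \cite{BerZel77} and used as a black box thereafter. Your sketch is exactly the classical Bernstein--Zelevinsky argument from that reference (open--closed stratification by double cosets, Mackey identification on each stratum, then the modulus-character bookkeeping), so there is nothing to compare and your outline is correct.
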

\subsection{Cuspidal representations}
Following \cite{MinSec14}, we now give a short treatment of the $\fl$-representations of $G_n$.

A representation $\rho\in\irr_n$ is called cuspidal if for all non-trivial compositions $\alpha$ of $n$, $r_\alpha(\rho)=0$. It is called supercuspidal if there exists no non-trivial composition $\alpha$ and $\pi\in \irr_\alpha$ such that $\rho$ appears as a subquotient of $\id_{P_\alpha}^{G_n}\pi$. We denote the subset of $\irr_n$ consisting of cuspidal respectively supercuspidal representations by $\cus_n$ respectively $\scu_n$ and define \[\cus\coloneq\bigcup_{n\in \NN}\cus_n.\]
We recall the following notions, see for example \cite[§3.4, §4.5]{MinSecSte14}.
Let $\rho\in \cus$ and recall that $\rho\times\chi\rho$ is reducible if and only if $\chi\cong \as^{\pm}$.
We recall the cuspidal line \[\ZZ[\rho]\coloneq \{[\rho \as^k]:\, k\in \ZZ\}\] and denote the cardinality of $\ZZ[\rho]$ by $o(\rho)$. Note that $o(\rho)$ is finite and if $\rho=\fo_1$, then $o(\rho)=d$. Set \[e(\rho)\coloneq \begin{cases}
    o(\rho)&\text{ if }o(\rho)>1,\\
    \ell&\text{ otherwise.}
\end{cases}\]
We can describe the behavior of cuspidal representations with respect to parabolic induction in the simplest case as follows, see for example in \cite[§9]{MinSec14}.
\begin{lemma}\label{L:cus}
    Let $\rho\in \cus$.
    \begin{enumerate}
        \item If $o(\rho)>2$, then
    $\rho\times\rho\as$ admits a unique subrepresentation, a unique quotient, and is of length $2$. Moreover, the quotient and the subrepresentation are not isomorphic.
    \item If $o(\rho)=2$, then
    $\rho\times\rho\as$ admits a unique subrepresentation, a unique quotient, and is of length $3$. Moreover, the quotient and the subrepresentation are not isomorphic. The third irreducible subquotient is a cuspidal representation.
    \item If $o(\rho)=1$ and $\ell\neq 2$, $\rho\times\rho$ is semi-simple of length $2$ and its summands are not isomorphic.
    \item  If $o(\rho)=1$ and $\ell=2$, $\rho\times\rho$ is of length $3$ with a unique subrepresentation and a unique quotient, which are isomorphic. The third irreducible subquotient is a cuspidal representation.
    \end{enumerate}
\end{lemma}
\subsection{Multisegments}
We now recall the combinatorics of multisegments, \emph{cf. }\cite{Zel80}, \cite{MinSec14}.
Let $\rho\in \cus_m$ and $a\le b\in \ZZ$. A segment is a sequence
\[[a,b]_\rho=([\rho\as^a],\ldots,[\rho\as^b])\] and two segments $[a,b]_\rho$ and $[a',b']_{\rho'}$ are equal if and only if $\rho'\as^a\cong \rho\as^{a'}$ and $b-a=b'-a'$. We also let $[a,b]_\rho^\lor=[-b,-a]_{\rho^\lor}$.
Set \[ b_\rho([a,b]_\rho)=a\in 
    \ZZ/(o(\rho)\ZZ),\, e_\rho([a,b]_\rho)=b\in
    \ZZ/(o(\rho)\ZZ).\]
We define length and degree of a segment as $l([a,b]_\rho)\coloneq b-a+1\in \ZZ$ and $\deg([a,b]_\rho)=(b-a+1)m\in \ZZ$. The cuspidal support of $[a,b]_\rho$ is defined as $\mathrm{cusp}([a,b]_\rho)\coloneq[\rho\as^a]+\ldots+[\rho\as^b]$.

A multisegment is a formal finite sum of segments and we extend the length $l$, the degree $\deg$, the notion of cuspidal support, and $(-)^\lor$ linearly. We let $\Ms$ be the set of multisegments and $\Ms(\rho)$ be the set of multisegments consisting only of segments of the form $[a,b]_\rho$. A multisegment is called aperiodic if it does not contain a sub-multisegment of the form
\[[a,b]_\rho+[a+1,b+1]_\rho+\ldots+[a+e(\rho)-1,b+e(\rho)-1]_\rho.\]
We now recall the Zelevinsky classification of {\cite[§9]{MinSec14} and \cite[§6]{Zel80}}. It takes the form of a surjective map
\[\Z\colon \Ms\ra \irr,\]
whose most important properties we recall below.
\begin{theorem}[{\cite[§9]{MinSec14}, \cite[§6]{Zel80}}]\label{T:msZ}
The map
\[\Z\colon \Ms\ra \irr\]
satisfies the following.
\begin{enumerate}
    \item $\deg(\Z(\fm))=\deg(\fm)$.
    \item $\Z$ restricted to aperiodic multisegments is a bijection.
    \item $\Z(\fm)^\lor\cong \Z(\fm^\lor)$.
    \item If $\fm=\fm_1+\fm_2$ then $\Z(\fm)$ is a subquotient of $\Z(\fm_1)\times \Z(\fm_2)$ and appears with multiplicity one in its Jordan-Hölder decomposition.
    \item $\Z([a,b]_\rho)$ is a subrepresentation of $\rho\as^a\times\ldots \times\rho\as^b$.
    \item If $\rho_1,\ldots,\rho_l\in \cus$ lie in pairwise different cuspidal lines and $\fm=\fm_1+\ldots+\fm_k$ with $\fm_i\in \Ms(\rho_i)$,
    \[\Z(\fm)\cong \Z(\fm_1)\times\ldots\times \Z(\fm_k).\]
\end{enumerate}
\end{theorem}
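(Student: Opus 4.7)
The plan is to follow the strategy pioneered in \cite{Zel80} for the complex case and adapted to the modular setting in \cite{MinSec14}, constructing $\Z$ first on single segments and then on multisegments via an ordered product.

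First I would treat the segment case, defining $\Z([a,b]_\rho)$ as the unique irreducible subrepresentation of $\rho\as^a\times\ldots\times\rho\as^b$. Existence and uniqueness follow by computing $r_{(m,\ldots,m)}$ via the Bernstein--Zelevinsky geometric lemma: precisely one double coset contributes the term $\rho\as^a\otimes\ldots\otimes\rho\as^b$ with appropriate ordering, and Frobenius reciprocity then isolates a unique irreducible subrepresentation. This yields properties (1) and (5) at once. For (6), one invokes that representations whose cuspidal supports lie in disjoint cuspidal lines admit irreducible products (as recorded in \cite[Proposition 2.6]{MinSec14}), so the construction is compatible with decomposition along cuspidal lines.

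Next, for a general multisegment $\fm=\Delta_1+\ldots+\Delta_r$, I would fix an ordering in which no $\Delta_j$ precedes $\Delta_i$ for $j>i$ (always possible for the linking preorder), and define $\Z(\fm)$ to be the unique irreducible subrepresentation of $\Z(\Delta_1)\times\ldots\times\Z(\Delta_r)$. Independence of the ordering follows from the fact that swaps between non-linked adjacent segments preserve the unique subrepresentation. Property (4) is then immediate from transitivity of parabolic induction applied to any grouping $\fm_1+\fm_2$ compatible with the chosen order. For (3), I would check that $\Z(\fm)^\lor$ satisfies the defining property of $\Z(\fm^\lor)$: the contragredient sends subrepresentations to quotients and reverses the order of parabolic induction, while $\pi\cc\cong\pi^\lor$ (for $\ell\neq 2$) converts this into the ordered product defining $\Z(\fm^\lor)$.

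The main obstacle is property (2), the bijectivity of $\Z$ on aperiodic multisegments. Surjectivity follows from the existence of supercuspidal support: every irreducible representation of $G_n$ has such a support, which, once distributed over cuspidal lines, produces a multisegment whose image under $\Z$ (constructed as above) recovers $\pi$. Injectivity is far more delicate in the modular setting: one argues by induction on $\deg(\fm)$, recovering the ``largest'' segment of $\fm$ from a cleverly chosen Jacquet module of $\Z(\fm)$. The aperiodicity condition is indispensable because, when $o(\rho)=1$ and $e(\rho)=\ell$, periodic configurations like $[a,b]_\rho+[a+1,b+1]_\rho+\ldots+[a+\ell-1,b+\ell-1]_\rho$ collapse under $\Z$ onto representations already parametrized by genuinely shorter multisegments, reflecting the failure of semisimplicity of $\rho\as\times\rho\as^2\times\ldots$ at length $e(\rho)$. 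The rigorous verification that exactly the aperiodic multisegments furnish a parametrizing set requires the full machinery of types and affine Hecke algebras developed in \cite{MinSec14}, transferring the question to the analogous combinatorial classification for modules over type $A$ affine Hecke algebras at roots of unity.
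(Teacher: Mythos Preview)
The paper does not prove this theorem at all: it is stated with the attribution \cite[§9]{MinSec14}, \cite[§6]{Zel80} in the header and no argument is given in the text. Your proposal is therefore not to be compared against anything in the paper; rather, it is a sketch of the original arguments in the cited references, and as such it is broadly accurate. One caveat: your treatment of property~(3) appeals to $\pi\cc\cong\pi^\lor$ ``for $\ell\neq 2$'', but the theorem is asserted for all $\ell$, so in a full proof you would need the argument of \cite{MinSec14} that handles the duality compatibility without this restriction.
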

As an example, let us note that if $\rho=\fo_1$, then 
\[\Z([a,b]_\rho)=\abs{b-a+1}{b+a}.\]
Next, let us recall the following properties of representations of the form $\Z(\De)$.
\begin{lemma}[{\cite[Proposition 7.5]{MinSec14}}]\label{L:rese}
    Let $[a,b]_\rho$ be a segment and $(s,t)$ a composition of $\deg([a,b]_\rho)$. If $\deg(\rho)$ does not divide $s$, then
    \[r_{s,t}(\Z([a,b]_\rho))=0.\]
    Otherwise, write $s=u\deg(\rho)$.
    Then \[r_{s,t}(\Z([a,b]_\rho))=\Z([a,a+u-1]_\rho)\otimes \Z([a+u,b]_\rho).\]
\end{lemma}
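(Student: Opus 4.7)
The plan is to deduce the lemma from the Geometric Lemma of Bernstein--Zelevinsky applied to the embedding of $\Z([a,b]_\rho)$ into the induced representation of its cuspidal factors.

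First I would embed $\Z([a,b]_\rho) \hookrightarrow \rho\as^a \times \rho\as^{a+1} \times \cdots \times \rho\as^b$ using property (5) of \Cref{T:msZ}. Since $r_{s,t}$ is exact, $r_{s,t}(\Z([a,b]_\rho))$ embeds into $r_{s,t}(\rho\as^a \times \cdots \times \rho\as^b)$, so it suffices to understand the latter and then to single out the correct subquotient. The Geometric Lemma applied to $r_{s,t} \circ \id_{P_{(\deg\rho,\ldots,\deg\rho)}}^{G_{\deg([a,b]_\rho)}}$ gives a filtration whose subquotients $F(\mathfrak{w})$ are indexed by double cosets $P_{s,t}\backslash G / P_{(\deg\rho,\ldots,\deg\rho)}$, which amount to specifying how the ordered blocks of size $\deg(\rho)$ are distributed between the $G_s$- and $G_t$-factor.

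The key observation is that each $F(\mathfrak{w})$ involves applying a Jacquet functor $r_{V'}$ to each $\rho\as^i$, which by cuspidality of $\rho$ forces $V'$ to be trivial on each factor. Hence only those double cosets survive in which the composition $(s,t)$ cleanly partitions the $\deg(\rho)$-blocks, which is impossible unless $\deg(\rho)\mid s$. This immediately gives the vanishing statement. When $s=u\deg(\rho)$, the surviving subquotients correspond to the $\binom{b-a+1}{u}$ choices of $u$ of the factors $\rho\as^a,\ldots,\rho\as^b$ to send to the left, and each contributes a parabolic induction of a tensor product of the chosen $\rho\as^i$'s with the remaining ones on the right.

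The main obstacle is then singling out the correct subquotient. I would do this as follows. Order the double cosets so that the filtration in the Geometric Lemma begins with the open orbit, which corresponds to sending the first $u$ factors $\rho\as^a,\ldots,\rho\as^{a+u-1}$ to the left and $\rho\as^{a+u},\ldots,\rho\as^b$ to the right, producing $(\rho\as^a\times\cdots\times \rho\as^{a+u-1})\otimes(\rho\as^{a+u}\times\cdots\times\rho\as^b)$ as the bottom piece $F_1$. Composing with the defining embedding $\Z([a,b]_\rho)\hookrightarrow\rho\as^a\times\cdots\times\rho\as^b$ and using second adjointness, one checks that $r_{s,t}(\Z([a,b]_\rho))$ lands inside $F_1$ and that it is precisely the unique subrepresentation $\Z([a,a+u-1]_\rho)\otimes \Z([a+u,b]_\rho)$ of that first layer, again by property (5) of \Cref{T:msZ}. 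The reverse inclusion, i.e.\ that no extra factors appear, can be obtained by a cuspidal support count: the total cuspidal support of $r_{s,t}(\Z([a,b]_\rho))$ must equal $\rho\as^a+\cdots+\rho\as^b$, and the candidate $\Z([a,a+u-1]_\rho)\otimes\Z([a+u,b]_\rho)$ already saturates it. The delicate point is ensuring that the distinguished embedding of $\Z([a,b]_\rho)$ is compatible with the open cell, which is the analogue of the classical argument of Zelevinsky and is where the $\fl$-modular case requires care; however, this compatibility has already been established in the cited references \cite{MinSec14} and \cite{Zel80}, so the proof ultimately reduces to invoking that structural input and patching together the cuspidal-support bookkeeping above.
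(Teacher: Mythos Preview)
The paper offers no proof of this lemma; it is quoted directly from \cite[Proposition~7.5]{MinSec14}. Your approach via the Geometric Lemma is the standard one, and the vanishing argument when $\deg(\rho)\nmid s$ is clean and correct.

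The identification step, however, has a genuine gap. Your cuspidal-support saturation argument asserts that because the candidate $\Z([a,a+u-1]_\rho)\otimes\Z([a+u,b]_\rho)$ already accounts for the multiset $\rho\as^a+\cdots+\rho\as^b$, no further constituents can occur. But every irreducible constituent of $r_{s,t}(\Z([a,b]_\rho))$ is a representation of $G_s\times G_t$ and therefore carries a cuspidal support of total size $b-a+1$ as well; nothing in your count prevents a second constituent with the same (or a permuted) cuspidal support from appearing. What you are tacitly using is that $r_{(\deg\rho,\ldots,\deg\rho)}(\Z([a,b]_\rho))$ has length one, and that is exactly the lemma for $u=1$ iterated via transitivity of Jacquet functors --- so the argument as written is circular. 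The fix, which is how the result is actually proved in \cite{Zel80} and \cite{MinSec14}, is to run an induction on $b-a$: establish the case $s=\deg(\rho)$ directly from the embedding $\Z([a,b]_\rho)\hookrightarrow\rho\as^a\times\Z([a+1,b]_\rho)$ and the resulting two-step filtration, then deduce the general case by transitivity. A smaller bookkeeping point: the piece you describe as $F_1$ (first $u$ factors to the left, in order) corresponds to the identity double coset, which is the \emph{closed} orbit and gives the top quotient of the filtration in the paper's convention, not the bottom sub; your ``lands inside $F_1$'' step would need to be rephrased accordingly.
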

\begin{lemma}[{\cite[Theorem 6.4.1]{Dro24}}]\label{L:end}
    Let $\fm=\De_1+\ldots+\De_k\in \Ms(\rho)$ be a multisegment and $\Z(\fn)$ an irreducible subquotient of
    \[\Z(\De_1)\times\ldots\times \Z(\De_k).\] If $e_\rho(\De_1)=\ldots=e_\rho(\De_k)$, then for any segment $\Gamma\in \fn$, we have $e_\rho(\Gamma)=e_\rho(\De_1)$.
    Similarly,
    if $b_\rho(\De_1)=\ldots=b_\rho(\De_k)$, then for any segment $\Gamma\in \fn$, we have $b_\rho(\Gamma)=b_\rho(\De_1)$.
\end{lemma}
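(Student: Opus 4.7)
The two parts of the lemma are exchanged by the contragredient involution: by \Cref{T:msZ}(3), $\Z(\fm)^\lor\cong\Z(\fm^\lor)$, and $[a,b]_\rho^\lor=[-b,-a]_{\rho^\lor}$ swaps the starting and ending residues of a segment. It therefore suffices to prove only the claim for $b_\rho$. Set $d=\deg(\rho)$, let $\bar b=b_\rho(\De_i)\in\ZZ/o(\rho)\ZZ$ be the common value, and write $\pi=\Z(\De_1)\times\ldots\times\Z(\De_k)$.

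My first step would be to derive a uniform upper bound on the set of characters appearing in the second slot of $\ol{r_{\cdot,d}}(\Z(\fn))$. \Cref{L:rese} applied in the extremal case $(s,t)=((b_i-a_i)d,d)$ yields $\ol{r_{\cdot,d}}(\Z(\De_i))=\Z([a_i,b_i-1]_\rho)\otimes\rho\as^{b_i}$, so the second tensor slot is precisely the rightmost character of $\De_i$. A Tadic-type computation via the Geometric Lemma for $\ol{r_{\cdot,d}}(\pi)$ then shows that any graded piece whose second slot has degree exactly $d$ must be a single-character suffix stripped from one factor $\Z(\De_i)$: the other factors can only contribute suffixes of degree $0$ or a positive multiple of $d$, so degree considerations force all of $d$ to come from one factor. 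Hence every second-slot character of $\ol{r_{\cdot,d}}(\pi)$ is of the form $\rho\as^{b_i}$, all congruent to $\rho\as^{\bar b}$ modulo $o(\rho)$. By exactness of the Jacquet functor and the subquotient relation $\Z(\fn)\subseteq\pi$, the same congruence controls every second-slot character of $\ol{r_{\cdot,d}}(\Z(\fn))$.

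For the converse direction, I would show that for each $\Gamma_j$ in the decomposition $\fn=\Gamma_1+\ldots+\Gamma_s$, the character $\rho\as^{b_\rho(\Gamma_j)}$ genuinely appears as a second-slot character in $\ol{r_{\cdot,d}}(\Z(\fn))$. By iterating \Cref{T:msZ}(5), $\Z(\fn)$ occurs with multiplicity one as a subquotient of $\Z(\Gamma_1)\times\ldots\times\Z(\Gamma_s)$, and a suitable dominance-type ordering of the $\Gamma_j$'s places $\Z(\Gamma_j)$ in the terminal slot with $\Z(\fn)$ realized as the socle (or cosocle). Frobenius reciprocity combined with the single-segment Jacquet formula from \Cref{L:rese} then exhibits a nonzero subquotient of $\ol{r_{\cdot,d}}(\Z(\fn))$ whose second slot is $\rho\as^{b_\rho(\Gamma_j)}$. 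Combined with the upper bound of the previous paragraph, this forces $b_\rho(\Gamma_j)\equiv\bar b$ for every $j$, completing the proof.

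The technical heart, and main obstacle, is this last realization step. In the $\ell$-modular setting the socles and cosocles of parabolic inductions need not be simple, so cleanly identifying $\Z(\fn)$ inside an ordered induction of $\Z(\Gamma_1),\ldots,\Z(\Gamma_s)$ requires the refined Zelevinsky classification of \cite{MinSec14} together with a careful dominance argument on the segments. Once this identification is in hand, the persistence of the distinguished Jacquet-module contribution under passage to the subquotient $\Z(\fn)$ reduces to routine Frobenius/Geometric-Lemma bookkeeping.
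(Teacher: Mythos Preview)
The paper does not actually prove this lemma: it is quoted verbatim from \cite[Theorem~6.4.1]{Dro24} and no argument is given in the text. So there is no in-paper proof to compare against; what follows is an assessment of your sketch on its own merits.

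Your reduction via contragredient and your Step~2 (the upper bound on second-slot characters of $r_{n-\deg\rho,\deg\rho}(\Z(\fn))$ via the Geometric Lemma and \Cref{L:rese}) are correct and clean. The gap is in Step~3. You assert that ``a suitable dominance-type ordering of the $\Gamma_j$'s places $\Z(\Gamma_j)$ in the terminal slot with $\Z(\fn)$ realized as the socle (or cosocle)'', and then read off $\rho\lvert{-}\rvert^{b_\rho(\Gamma_j)}$ via Frobenius reciprocity. This fails already in characteristic~$0$: in the Zelevinsky ordering the segment allowed in the terminal slot must be one that is not preceded by any other segment of $\fn$, so if $\Gamma_j$ is preceded by some $\Gamma_k$ you simply cannot put $\Gamma_j$ last in any admissible ordering. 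Thus your argument only shows $b_\rho(\Gamma_j)\equiv\bar b$ for those $j$ that are extremal in the precedence order, not for all $j$.

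A second, subtler issue: once you have shown that every second-slot character of $r_{n-\deg\rho,\deg\rho}(\Z(\fn))$ is congruent to $\bar b$, you still need to know that \emph{each} endpoint $b_\rho(\Gamma_j)$ is witnessed there. But this is essentially the statement that the right $\rho$-derivative of $\Z(\fn)$ detects every removable box of $\fn$, which in the $\ell$-modular setting is part of the derivative calculus developed in \cite{MinSec14} and \cite{Dro24}. Invoking that calculus here is tantamount to invoking the theorem you are trying to prove. A genuine independent proof would need either an inductive scheme that does not break the ``common endpoint'' hypothesis when one strips a cuspidal (note that shortening one $\Delta_i$ shifts its endpoint to $\bar b-1$, so naive induction on degree collapses), or a direct combinatorial argument from the cuspidal-support multiplicities $m_c$; neither is supplied. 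This is presumably why the paper defers to the cited reference rather than reproving the result.
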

Let $\rho\in\cus$. We say $\pi\in \irr$ is \emph{strongly $\rho$-reduced} if there exists no $\tau\in\irr$ and integer $c\le 0$ such that $\pi\hra \Z([c,0]_\rho)\times \tau$.

On the other hand, we call a representation $\pi$ \emph{$\rho$-saturated} if there exist integers $c_1,\ldots,c_k\le 0$ such that $\pi\hra\Z([c_1,0]_\rho)\times\ldots\times \Z([c_k,0]_\rho)$.
Note that if $\Z(\fm)$ is $\rho$-saturated, then every segment in $\fm$ ends in $0$ by \Cref{L:end}.
\begin{lemma}\label{L:reduced}
    Let $\rho\in \cus$ and $\pi\in \irr$. There exists a unique $\rho$-saturated representation $\rho(\pi)$ and a $\rho$-reduced representation $\D_\rho(\pi)$ such that $\pi\hra\rho(\pi)\times \D_\rho(\pi)$.
\end{lemma}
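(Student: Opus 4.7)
I would argue by induction on $\deg(\pi)$. If $\pi$ is already strongly $\rho$-reduced, take $\rho(\pi)$ to be the trivial representation of $G_0$ (vacuously $\rho$-saturated, corresponding to the empty multisegment) and $\D_\rho(\pi)=\pi$; existence and uniqueness are immediate. Otherwise, by the very definition of strongly $\rho$-reduced, one can choose $c\le 0$ and an irreducible $\tau$ with $\pi\hra\Z([c,0]_\rho)\times\tau$. Since $\deg(\tau)<\deg(\pi)$, the inductive hypothesis supplies an irreducible $\rho$-saturated $\rho(\tau)$ and an irreducible strongly $\rho$-reduced $\D_\rho(\tau)$ with $\tau\hra\rho(\tau)\times\D_\rho(\tau)$. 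Composing, $\pi\hra\Z([c,0]_\rho)\times\rho(\tau)\times\D_\rho(\tau)$.

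The next step is to repackage the first two factors into a single irreducible $\rho$-saturated representation. Write $\rho(\tau)\hra\Z([c_2,0]_\rho)\times\cdots\times\Z([c_k,0]_\rho)$, so that $\Z([c,0]_\rho)\times\rho(\tau)$ sits inside a product of segments ending at $0$, and every irreducible subrepresentation of it is therefore $\rho$-saturated. Applying Frobenius reciprocity to $\pi\hra X\times\D_\rho(\tau)$ with $X=\Z([c,0]_\rho)\times\rho(\tau)$ gives a nonzero map $r_{\deg X,\deg\D_\rho(\tau)}(\pi)\ra X\otimes\D_\rho(\tau)$; since $\D_\rho(\tau)$ is irreducible, the image is of the form $X'\otimes\D_\rho(\tau)$ for some $X'\subseteq X$. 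Choosing an irreducible constituent $\sigma$ of $\soc(X')\subseteq\soc(X)$ and combining Frobenius reciprocity with the exactness of parabolic induction, one promotes this to an embedding $\pi\hra\sigma\times\D_\rho(\tau)$. Then $\sigma$ is irreducible and $\rho$-saturated, $\D_\rho(\tau)$ remains strongly $\rho$-reduced, and we set $\rho(\pi)=\sigma$, $\D_\rho(\pi)=\D_\rho(\tau)$.

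For uniqueness, suppose $\pi\hra\sigma_i\times\pi_i$ ($i=1,2$) are two such decompositions. Using \Cref{L:rese} and \Cref{L:end}, the cuspidal support of each $\sigma_i$ consists entirely of segments of the form $[c,0]_\rho$, so the multiset of those segments is determined by the $\rho$-part of the cuspidal support of $\pi$ that can be ``peeled off'' as segments ending at $0$. The strongly $\rho$-reduced hypothesis on $\pi_i$ is exactly the maximality statement that no further such segment can be pulled out; if $\sigma_1\not\cong\sigma_2$ one of the two could absorb an extra factor $\Z([c,0]_\rho)$ from $\pi_i$, contradicting its strongly $\rho$-reduced property. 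A cleaner incarnation of this argument proceeds by comparing the two pullbacks of $\pi$ via the adjunctions for $r_\alpha$ and $\ol{r_\alpha}$ at the composition $\alpha=(\deg\sigma_i,\deg\pi_i)$, forcing $\sigma_1\cong\sigma_2$ and then $\pi_1\cong\pi_2$.

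\textbf{Main obstacle.} The technical heart is the socle analysis in the inductive step: extracting an irreducible $\sigma\hra\Z([c,0]_\rho)\times\rho(\tau)$ through which the embedding of $\pi$ actually factors. This uses the structure of subrepresentations of $X\otimes Y$ for $Y$ irreducible, exactness of parabolic induction, and the classification of subquotients of products of segments implicit in \Cref{T:msZ}. Once this is in place, the uniqueness argument is largely bookkeeping on cuspidal supports via \Cref{L:end}.
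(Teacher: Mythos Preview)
Your inductive setup is reasonable and parallels the paper's first move, but the existence argument has a real gap precisely at the step you flag as the ``main obstacle''. From the Frobenius map $r_\alpha(\pi)\ra X\otimes\D_\rho(\tau)$ you correctly extract an image $X'\otimes\D_\rho(\tau)$ with $X'\subseteq X$, but the two requirements on $\sigma$ pull in opposite directions. Choosing $\sigma\hra\soc(X')$ makes $\sigma$ a subrepresentation of $X$ (hence $\rho$-saturated), yet gives no reason for the embedding $\pi\hra X'\times\D_\rho(\tau)$ to factor through the subspace $\sigma\times\D_\rho(\tau)$; choosing instead $\sigma$ to be an irreducible \emph{quotient} of $X'$ does yield $\pi\hra\sigma\times\D_\rho(\tau)$ via Frobenius, but then $\sigma$ is only a subquotient of $X$, and a subquotient of a product $\Z([c_1,0]_\rho)\times\cdots\times\Z([c_k,0]_\rho)$ need not itself embed in such a product. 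Neither exactness of parabolic induction nor \Cref{T:msZ} bridges this.

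The paper closes the gap by a different mechanism. After reducing to $o(\rho)>1$, so that $\ell\neq 2$ and the MVW-type identity $\dim_\fl\ho(\pi,\pi_1\times\cdots\times\pi_k)=\dim_\fl\ho(\pi_k\times\cdots\times\pi_1,\pi)$ is available, it first obtains (as you would) an irreducible subquotient $\sigma$ of $Y=\Z([c_1,0]_\rho)\times\cdots\times\Z([c_k,0]_\rho)$ with $\pi\hra\sigma\times\pi'$ and $\pi'$ strongly $\rho$-reduced. It then dualizes to get a surjection $\pi'\times Y\sra\pi$, composes with $\pi\hra\sigma\times\pi'$, and applies the Geometric Lemma to the resulting map $r_{\deg\sigma,\deg\pi'}(\pi'\times Y)\ra\sigma\otimes\pi'$: the strongly $\rho$-reduced hypothesis on $\pi'$ forces this map not to vanish on the bottom filtration piece $\Z([c_k,0]_\rho)\times\cdots\times\Z([c_1,0]_\rho)\otimes\pi'$, whence $\sigma$ is a quotient of the reversed product and therefore, by MVW once more, a subrepresentation of $Y$. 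The same Geometric Lemma argument, rerun with a second decomposition $(\sigma',\pi'')$, handles uniqueness; your cuspidal-support sketch is too coarse there, since distinct $\rho$-saturated irreducibles can share a cuspidal support.
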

\begin{proof}
    Using \Cref{T:msZ} (6) it is easy to show that it suffices to prove the claim if the cuspidal support of $\pi$ consists only of elements in $\ZZ[\rho]$. We can moreover assume $o(\rho)>1$, as otherwise the claim is trivial. In particular $\ell\neq 2$. 
    We first show the existence of such a pair of representations. We can find integers
    $c_1,\ldots,c_k\le 0$ such that \[\pi\hra\Z([c_1,0]_\rho)\times\ldots\times \Z([c_k,0]_\rho)\times \pi'\] and $\pi'$ is strongly $\rho$-reduced. We thus obtain a subquotient $\sigma$ of $\Z([c_1,0]_\rho)\times\ldots\times \Z([c_k,0]_\rho)$ such that 
    \[\pi\hra\sigma\times\pi'.\]
    We need to show that $\sigma$ is not just a subquotient but rather a subrepresentation of the above induced representation to show that it is saturated.
    Since $\ell\neq 2$, we have a non-zero map
    \[\pi'\times\Z([c_1,0]_\rho)\times\ldots\times \Z([c_k,0]_\rho)\sra \pi\hra \sigma\times \pi'\]
    and by Frobenius reciprocity a map
    \[r_{\deg(\sigma),\deg(\pi')}(\pi'\times\Z([c_1,0]_\rho)\times\ldots\times \Z([c_k,0]_\rho))\sra \sigma\otimes \pi'.\]
    We apply the Geometric Lemma and will show that this map cannot vanish on \[\Z([c_k,0]_\rho)\times\ldots\times \Z([c_1,0]_\rho)\otimes \pi',\] which in turn would imply that $\sigma$ is a subrepresentation of $\Z([c_1,0]_\rho)\times\ldots\times \Z([c_k,0]_\rho)$. Indeed, otherwise there would exist a non-trivial composition $(a,b)$ of $\deg(\sigma)$, non-trivial quotient $\pi_1$ of $r_{a,b}(\Z([c_k,0]_\rho)\times\ldots\times \Z([c_1,0]_\rho))$, and an irreducible representation $\tau$ 
    such that there exists a $G_{\deg \pi'}$-invariant morphism
    \[\id_{G_a\times P_{b,\deg\tau}}^{G_b\times G_{\deg\pi'}}(\pi_1\otimes\tau) \sra \pi'.\]
    But by the Geometric Lemma we can give an explicit composition series of \[r_{a,b}(\Z([c_k,0]_\rho)\times\ldots\times \Z([c_1,0]_\rho)),\]
all of whose $G_b$-part are again of the form
\[\Z([c_1',0]_\rho)\times\ldots\times \Z([c_j',0]_\rho).\]
We thus obtain that $\pi'$ is not strongly $\rho$-reduced, a contradiction.

To prove that such $\sigma$ and $\pi'$ are unique, 
we employ the same strategy of analyzing the map obtained by Frobenius reciprocity. Indeed, let $\sigma',\pi''$ be a second possible such pair. We then have a non-zero map \[\pi''\times\sigma'\sra \pi\hra\sigma\times\pi'\]
and hence by Frobenius reciprocity a non-zero map
\[r_{\deg(\sigma),\deg(\pi')}(\pi''\times\sigma')\sra \sigma\otimes\pi'.\]
Again we use the Geometric Lemma and repeat the above argument. The only extra step necessary is that $\pi_1$ in the above notation is a priori just a quotient of $r_{a,b}(\sigma')$. However, since for suitable $c_1',\ldots,c_l'$
\[r_{a,b}(\Z([c_k',0]_\rho)\times\ldots\times \Z([c_1',0]_\rho))\sra r_{a,b}(\sigma'),\] the same argument works.
\end{proof}
We also extract the following from the proof.
\begin{corollary}\label{C:soc}
    Let $\pi$ be strongly $\rho$-reduced and $\sigma$ $\rho$-saturated. Then
    \[\dim_\fl\ho(\pi\times\sigma,\sigma\times\pi)=1\]
    and every morphism factors through the necessarily irreducible unique subrepresentation of $\sigma\times\pi$.
\end{corollary}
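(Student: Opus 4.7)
The plan is to adapt the Frobenius-reciprocity-plus-Geometric-Lemma argument that drives the uniqueness part of \Cref{L:reduced}, now applied directly to morphisms $\pi\times\sigma\to\sigma\times\pi$.

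For existence, take any irreducible subrepresentation $\pi^*\hookrightarrow \sigma\times\pi$, which exists since $\sigma\times\pi$ is non-zero of finite length. Applying the $\cc$-involution (which reverses the order of parabolic factors: $(\sigma\times\pi)\cc \cong \pi\cc\times\sigma\cc$) followed by the contragredient, and using $\pi\cc\cong \pi^\lor$ valid for $\ell\ne 2$, one obtains a surjection $\pi\times\sigma\twoheadrightarrow \pi^*$. Composing with the original inclusion yields a non-zero morphism $\pi\times\sigma\to\sigma\times\pi$.

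For the upper bound, apply Frobenius reciprocity
\[\ho(\pi\times\sigma,\sigma\times\pi)\cong \ho(r_{\deg\sigma,\deg\pi}(\pi\times\sigma),\sigma\otimes\pi),\]
and decompose the first argument via the Geometric Lemma. The filtration is indexed by $P_{\deg\pi,\deg\sigma}$-$P_{\deg\sigma,\deg\pi}$ double cosets, parametrized by $a\in\{0,\ldots,\min(\deg\pi,\deg\sigma)\}$ via the $2\times2$ non-negative integer matrix with row sums $(\deg\pi,\deg\sigma)$ and column sums $(\deg\sigma,\deg\pi)$. The $a=0$ piece is precisely $\sigma\otimes\pi$ (after the natural Weyl swap) and contributes at most $1$ to the Hom-dimension. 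For each $a>0$, the piece is a non-trivial parabolic induction whose $G_{\deg\sigma}$-component has the form $\id_{P_{a,\deg\sigma-a}}^{G_{\deg\sigma}}(\pi_1\otimes\sigma_1)$, with $\pi_1$ a degree-$a$ constituent of a Jacquet module of $\pi$. A non-trivial morphism from this piece to $\sigma\otimes\pi$ would, by Bernstein reciprocity on the $G_{\deg\sigma}$-side, force an embedding of $\pi_1\otimes\sigma_1$ into an opposite Jacquet module of $\sigma$. Since $\sigma\hookrightarrow \Z([c_1,0]_\rho)\times\ldots\times\Z([c_k,0]_\rho)$, the Jacquet-module description of \Cref{L:rese} combined with the Geometric Lemma and the end-preservation result of \Cref{L:end} forces $\pi_1$ itself to be of the form $\Z([c,0]_\rho)$ with $c\le 0$. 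Frobenius then yields an embedding $\pi\hookrightarrow\Z([c,0]_\rho)\times\tau$ for some $\tau$, contradicting the strong $\rho$-reducedness of $\pi$. Hence only the $a=0$ piece contributes and $\dim_\fl\ho=1$.

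For the unique irreducible subrepresentation, write $\soc(\sigma\times\pi)=\bigoplus_{\pi^*}(\pi^*)^{\oplus n_{\pi^*}}$ with $n_{\pi^*}=\dim\ho(\pi^*,\sigma\times\pi)$. The $\cc\lor$-involution yields an isomorphism $\ho(\pi\times\sigma,\pi^*)\cong \ho(\pi^*,\sigma\times\pi)$, hence
\[\sum_{\pi^*}n_{\pi^*}^2=\dim\ho(\pi\times\sigma,\soc(\sigma\times\pi))\le \dim\ho(\pi\times\sigma,\sigma\times\pi)=1.\]
Since the socle is non-zero, exactly one $n_{\pi^*}$ equals $1$ and all others vanish, so the socle is simple, and every morphism $\pi\times\sigma\to\sigma\times\pi$ has image equal to this unique irreducible sub. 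The main obstacle is the middle paragraph's case analysis: classifying the degree-$a$ Jacquet factors of the $\rho$-saturated representation $\sigma$ and matching them against the forbidden embeddings ruled out by strong $\rho$-reducedness, which is the combinatorial core also driving the uniqueness proof of \Cref{L:reduced}.
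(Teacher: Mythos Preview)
Your argument is correct and is precisely what the paper means by ``extract from the proof'' of \Cref{L:reduced}: Frobenius reciprocity, the Geometric Lemma filtration, and the segment-end structure of Jacquet modules of $\rho$-saturated representations combine to show that only the swap piece contributes, while your $\sum_{\pi^*} n_{\pi^*}^2\le 1$ step cleanly handles the simplicity of the socle, which the paper leaves implicit.

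One small overclaim worth flagging: the irreducible $\pi_1$ you extract (as the $G_a$-component of a subrepresentation of $\bar r_{a,\deg\sigma-a}(\sigma)$) need not be a single segment $\Z([c,0]_\rho)$; in general it is only $\rho$-saturated, i.e.\ of the form $\Z(\fn)$ with every segment of $\fn$ ending at $0$ (this is what \Cref{L:rese} and \Cref{L:end} actually give). This is harmless for your argument: such $\pi_1$ still embeds into a product $\Z([c_1',0]_\rho)\times\cdots\times\Z([c_j',0]_\rho)$, so from $\pi\hra\pi_1\times\pi_2$ one obtains $\pi\hra\Z([c_1',0]_\rho)\times\tau'$ and then, passing to an irreducible $\tau$, the desired contradiction with strong $\rho$-reducedness of $\pi$.
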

\begin{rem}
    The reader familiar with the notion of $\rho$-derivatives might wonder if there is a connection between the above notions and $\rho$-derivatives. Indeed, in the language of \cite{Min08}, $\pi$ being strongly $\rho$-reduced implies that $\pi$ is $\rho$-reduced but not the other way around. However, the goal of the above definition was more to mimic the definition of being reduced respectively saturated with respect to a segment, see \cite[§7]{LapMin25} for a definition. If we are in the banal setting, then a representation $\pi$ is strongly $\rho$-reduced if and only if for all segments $\De=[c,0]_\rho$, $\pi$ is reduced with respect to $\De$ in the language of \cite{LapMin25}.
\end{rem}
\section{Intertwining operators}\label{S:inter}
We recall in this section the relevant notions of $\ell$-modular intertwining operators of \cite[§7]{Dat05}, see also \cite[§3]{Dro24}.
Let $P,Q$ be parabolic subgroups of $G_n$ with the same Levi-component $M$, unipotent parts $U_P$ and $U_Q$, and $\sigma$ a representation of $M$.
 We refer to \cite[§7]{Dat05} and \cite[IV]{Waldspurger2003Plancherel}, see also \cite[§3]{Dro24}, for a more precise treatment of the following definition.
In \cite[§7]{Dat05} the author defines a map 
\[M_{\sigma,Q,P}\colon \id_{Q}^{G_n}(\sigma\otimes\psi)\ra \id_P^{G_n}(\sigma\otimes\psi),\]
where $\psi\colon M\ra \fl(T)$ is a suitable generic character.
This map is uniquely defined by the property that if $f\in \id_{Q}^{G_n}(\sigma\otimes\psi)$ with $\mathrm{supp}(f)\cap QP$ compact mod $Q$, 
\[M_{\sigma,Q,P}(f)(1_n)=\int_{U_{\overline{Q}}\cap U_P}f(u)\,\mathrm{d}u,\]
where the Haar-measure on $U_{\overline{Q}}\cap U_P$ comes from a fixed Haar-measure on $\mathrm{F}$. This choice of Haar-measure will always introduce an ambiguity, since it implies that these intertwiners are defined up to a scalar. We will at the end of this chapter fix a such choice once and for all.
As in \cite[§3]{Dro24}, we find a minimal $\Lambda(\sigma,Q,P)\in \NN$ such that $(T-1)^{\Lambda(\sigma,P',P)}M_{\sigma,Q,P}(f)$ has no pole at $T=1$ for all $f\in \id_{Q}^{G_n}(\sigma\otimes\psi)$ with no pole at $T=1$.
Finally, if $\sigma$ is irreducible, or a subquotient of representations induced from irreducible representations, \[M_{\sigma,\overline{P},P}\circ M_{\sigma,P,\overline{P}}\in \fl[T,T^{-1}]^\times.\] We denote the order of the pole or zero of this scalar by $\alpha(\sigma,P)$. We call $M_{\sigma,Q,P}$ \emph{regular} if $\Lambda(\sigma,Q,P)=0$ and all the intertwining operators that will be crucial to us will turn out to be regular.
Finally, if $P=P_\alpha,\,\alpha=(\alpha_1,\ldots,\alpha_k)$ we make the following definition. For $w\in S_k$, let $P_w$ be the standard parabolic subgroup corresponding to  $w\alpha=(\alpha_{w(1)},\ldots,\alpha_{w(k)}).$ Then there exists a permutation matrix $w'\in G_n$ such that $P_w$ is conjugated to a parabolic subgroup $P_w'$ with Levi-factor $M_\alpha$ and we take $w'$ to be such a permutation of minimal length. We define
\[M_{\sigma,w,P}\coloneq (T-1)^{\Lambda(\sigma,P_w',P)}M_{\sigma,P_w',P}\lvert_{T=1}\circ \ad(w') \colon \id_{P_w}^{G_n}\ad(w')(\sigma)\ra \id_P^{G_n}\sigma\]
and $\Lambda(\sigma,w,P)\coloneq \Lambda(\sigma,P_w',P)\in\NN$. 
If $w$ is the maximal element in $S_k$, \emph{i.e.} $P_w'=\overline{P}$ and $\sigma=\sigma_1\otimes\ldots\otimes \sigma_k$, we write
\[M_{\sigma_1,\ldots,\sigma_k}\coloneq M_{\sigma,w,P},\,\Lambda(\sigma_1,\ldots,\sigma_k)\coloneq \Lambda(\sigma,w,P),\,\alpha(\sigma_1,\ldots,\sigma_k)\coloneq \alpha(\sigma,P).\]
\begin{theorem}\label{T:PInt}
    The above objects satisfy the following properties.
\begin{enumerate}
    \item If $\sigma'$ is a subrepresentation of $\sigma$, then $\Lambda(\sigma,w,P)\ge \Lambda(\sigma',w,P)$ and
    \[\restr{M_{\sigma,w,P}}{\id_{P_w}^{G_n}\ad(w')(\sigma')}=\begin{cases}
        M_{\sigma',w,P}&\text{ if }\Lambda(\sigma,w,P)=\Lambda(\sigma',w,P),\\0&\text{ otherwise.}
    \end{cases}\]
    A similar statement holds for quotients of $\sigma$.
    \item Assume that $\sigma$ is irreducible. Then $\Lambda(\sigma,w,P)$ vanishes if and only if $M_{\sigma,w,P}(f)(1_n)$ does vanish for all $f\in \id_{P_w}^{G_n}\ad(w)(\sigma)$ supported on \[\bigcup_{w'\in \overline{\mathfrak{w}},\, w'\notin \mathfrak{w}}\mathfrak{w}\] and does not vanish on the set of $f\in \id_{P_w}^{G_n}\ad(w)(\sigma)$ supported on
    \[\bigcup_{w'\in \overline{\mathfrak{w}}}\mathfrak{w},\]
    where in both cases $\mathfrak{w}$ ranges over the orbits in $P_w\backslash G/P$.
    \item If $w$ is the identity, $\Lambda(\sigma,w,P)=0$ and $M_{\sigma,w,P}=\id_P^G(1_\sigma)$.
    \item[] For the next 3 points assume that $w$ is the maximal element in $S_k$.
    \item We have that $\alpha(\sigma,P)$ is independent of $P$ and
    \[\fd(\sigma,w,P)\coloneq \Lambda(\sigma,w,P)+\Lambda(\mathrm{Ad}(w')(\sigma),w,\mathrm{Ad}(w')(\overline{P}))+\alpha(\sigma,P)\ge 0\]
    \item If $\sigma$ is an irreducible representation of $M$, 
    \[M_{\mathrm{Ad}(w')(\sigma),w,\mathrm{Ad}(w')(\overline{P})}\circ M_{\sigma,w,P}\] is a scalar. It is non-zero if and only if $\fd(\sigma,w,P)=0$.
    \item If $\sigma'$ is a subquotient of $\sigma$, then $\alpha(\sigma,P)=\alpha(\sigma',P)$.
    \item Let $\sigma=\sigma_1\otimes\sigma_2\otimes\sigma_3$ be irreducible and $P=P_{n_1,n_2,n_3}$. Then
    \[\alpha(\sigma_1\times\sigma_2\otimes\sigma_3,P_{n_1+n_2,n_3})=\alpha(\sigma_1,\sigma_3,P_{n_1,n_3})+\alpha(\sigma_2,\sigma_3,P_{n_2,n_3}).\]
\item[] From now on we assume that $P=P_{d,\ldots,d}$, $\sigma=\sigma'\otimes\ldots\otimes\sigma'$ is irreducible, and note that in this case $w'=w_d$ from \Cref{S:reptheor}.
    \item Let $v\in S_k$ and assume that for all $w$ of length $1$ we have that $M_{\sigma,w,P}$ is an isomorphism and $\Lambda(\sigma,w,P)=0$. Then
    $M_{\sigma,w,P}\circ M_{\sigma,v,P}=M_{\sigma,vw,P}$ and $\Lambda(\sigma,w,P)=0$ for all $w$.
    \item Assume that $\len(w)=1$ and set $Q=P\cup P_w'$ with Levi-subgroup $M$. Then
    $M_{\sigma,w,P}=\id_Q^{G_n}(M_{\sigma,w,M\cap P})$.
\end{enumerate}
\end{theorem}
\begin{proof}
    All properties except (8) can be found in \cite[§7]{Dat05}, see also \cite[§3]{Dro24} or \cite[§4]{Dro25}.
    For (8), we proceed as follows by induction on $\len(w)$.
    If $n=2$, the claim follows from (6) and (9), acting as our base case. Moreover, it is easy to see that it is enough to prove the claim for $l(v)=1$. We argue by induction on $l(w)$. If $l(vw)< l(w)$, we know by the induction hypothesis already that $M_{\sigma,vw,P}\circ M_{\sigma,v,P}=M_{\sigma,w,P}$ and hence the claim follows from (9) and the assumption on $M_{\sigma,v,P}$.
     It thus suffices to prove the claim in the case $l(vw)=l(v)+1$. But this is shown in \cite[Proposition 7.8]{Dat05}.
\end{proof}
\begin{lemma}\label{L:int1}
    Let $\rho\in \cus$ and assume that $o(\rho)=1$. Then $M_{\rho,\rho}$ is an isomorphism and not a scalar.
\end{lemma}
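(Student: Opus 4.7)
The intertwiner $M := M_{\rho,\rho}\colon \rho\oltimes\rho \to \rho\times\rho$ is non-zero by the very definition of the normalized intertwining operator. Since $o(\rho)=1$, Lemma \ref{L:cus} shows that both $\rho\oltimes\rho$ and $\rho\times\rho$ are reducible (of length $2$ when $\ell\neq 2$ and of length $3$ when $\ell=2$), so neither conclusion of the lemma is automatic.

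For the non-scalar assertion, I realize $\rho\oltimes\rho=\id_{\op}^{G_{2d}}(\rho\otimes\rho)$ concretely as a space of $(\rho\otimes\rho)$-valued functions on $G_{2d}$ and use the (normalized) integral formula $M(f)(1)=\int_{U} f(w_d^{-1}u)\,du$, where $U$ is the unipotent radical of $P=P_{d,d}$ and $w_d$ is the representative of the swap fixed in \Cref{S:reptheor}. Pick a non-zero $v\in\rho\otimes\rho$ and let $f$ be the unique element supported on the closed coset $\op\subset G_{2d}$ with $f(1)=v$. Since $U\subseteq P$ and $w_d^{-1}\notin P$, the product $w_d^{-1}u$ sits in the open double coset for every $u\in U$, and in particular outside $\op$, so $f(w_d^{-1}u)=0$ throughout and $M(f)(1)=0$. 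A non-zero scalar multiple of any canonical identification $\rho\oltimes\rho\to\rho\times\rho$ would instead send $f$ to a function whose value at $1$ is proportional to $v\neq 0$, so $M$ cannot be such a scalar.

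For the isomorphism assertion, I appeal to Theorem \ref{T:PInt}(5): the composition $M^{\mathrm{rev}}\circ M\in\ed(\rho\oltimes\rho)$ is automatically a scalar $\lambda\in\fl$, non-zero precisely when the invariant $\fd(\rho\otimes\rho,w,P)$ of Theorem \ref{T:PInt}(4) vanishes, and in that case $M$ is invertible. It therefore suffices to show $\lambda\neq 0$. In the case $\ell\neq 2$, Lemma \ref{L:cus} gives $\rho\oltimes\rho=\pi_1\oplus\pi_2$ semisimple with non-isomorphic irreducible summands; Schur forces $M$ and $M^{\mathrm{rev}}$ to preserve this isotypic decomposition and act by scalars $c_i,c_i'$ on each summand, with $c_1c_1'=c_2c_2'=\lambda$. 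Assuming for contradiction $\lambda=0$, at least one of $c_i,c_i'$ vanishes in each pair, so WLOG $M$ vanishes on some $\pi_i$. A refined version of the integral argument, applied to functions supported on $\op$ and their projections onto the isotypic components, then forces $M$ or $M^{\mathrm{rev}}$ to be identically zero, contradicting non-vanishing.

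The main obstacle is the case $\ell=2$: here the length-$3$ composition series of $\rho\oltimes\rho$ has isomorphic socle and cosocle, so $\ed(\rho\oltimes\rho)$ contains a non-trivial nilpotent element and both $M$ and $M^{\mathrm{rev}}$ could in principle have a nilpotent component in addition to a scalar one. Ruling out the scenarios where the scalar parts vanish requires carefully tracking the support-vanishing argument through the filtration and combining it with Theorem \ref{T:PInt}(2) applied to the irreducible $\rho\otimes\rho$, in order to pin down the $\id$-coefficient of $M$ in this non-semisimple endomorphism algebra.
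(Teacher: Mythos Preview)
Your attempt avoids the one input that makes the paper's proof work, and the workarounds you propose do not close the gaps.

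\textbf{The integral formula needs $\Lambda=0$.} Your non-scalar argument invokes $M(f)(1)=\int_U f(w_d^{-1}u)\,du$ for $f$ supported on the closed cell. That formula is precisely what holds when the operator is \emph{regular}, i.e.\ when $\Lambda(\rho,\rho)=0$; for $\Lambda>0$ the operator $M_{\rho,\rho}$ is defined as a leading term and the naive integral expression is not available. Since $o(\rho)=1$ means $\rho\cong\rho\lvert-\lvert$, you are exactly at a potential pole, so regularity cannot be assumed. In the paper's argument this is handled by the external computation $\alpha(\rho,\rho)=0$ from \cite{Sil80} (see also \cite[\S 8.3]{Dat05}): if $M$ were a scalar then $\Lambda>0$ by \Cref{T:PInt}(2), and since a scalar $M$ is an isomorphism one gets $\fd=0$ from \Cref{T:PInt}(5), whence $\alpha=\fd-\Lambda-\Lambda'<0$, contradicting $\alpha=0$. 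So the paper deduces $\Lambda=0$ rather than assuming it.

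\textbf{The isomorphism step has a genuine hole.} In your $\ell\neq2$ argument you reach the point where, say, $M$ kills one summand $\pi_i$. You then assert that ``a refined version of the integral argument \ldots\ forces $M$ or $M^{\mathrm{rev}}$ to be identically zero''. I do not see how: the vanishing on closed-cell functions tells you nothing about the \emph{other} summand $\pi_j$ unless you know how $\pi_1,\pi_2$ sit relative to the Bruhat filtration, and that is exactly the delicate information you have not supplied. For $\ell=2$ you essentially concede that the argument is not complete. The paper sidesteps all of this: once $\Lambda=0$ is known, $\fd=\Lambda+\Lambda'+\alpha=0+\Lambda'+0\ge 0$ together with \Cref{T:PInt}(4),(5) forces $\fd=0$ and hence $M_{\rho,\rho}$ is an isomorphism, uniformly in $\ell$.

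In short, the missing idea is the numerical input $\alpha(\rho,\rho)=0$; with it both conclusions drop out of \Cref{T:PInt} in two lines, and without it your direct approach does not go through.
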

\begin{proof}
Assume $M_{\rho,\rho}$ is a scalar which is by \Cref{T:PInt}(2) equivalent to $\Lambda(\rho,\rho)>0$. Moreover, by \Cref{T:PInt}(5) this implies $\fd(\rho,\rho)=0$ which in turn implies $\alpha(\rho,\rho)<0$. But by \cite[1.6]{Sil80}, see also \cite[§ 8.3]{Dat05}, $\alpha(\rho,\rho)=0$. Thus the map is not a scalar.
We also just showed that $\Lambda(\rho,\rho)=0$, implying that $\fd(\rho,\rho)=0$ and hence $M_{\rho,\rho}$ is an isomorphism.
\end{proof}
\begin{lemma}\label{L:basecase}
    Let $\rho,\rho'\in \cus$ with $o(\rho)>1$. Then 
    \[\Lambda(\rho,\rho')=\begin{cases}
        1&\text{ if }\rho'\cong \rho\\ 0&\text{ otherwise,}
    \end{cases}\]
    and
            \[\alpha(\rho,\rho')=\begin{cases}
        1&\text{ if }o(\rho)>2, \,\rho'\cong \rho\lvert-\lvert,\\ 2&\text{ if }o(\rho)=2,\, \rho'\cong \rho\lvert-\lvert,\\
        -2&\text{ if } \rho'\cong \rho,\\
        0& \text{ otherwise.}
    \end{cases}\]
\end{lemma}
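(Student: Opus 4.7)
I would split the proof according to the position of $\rho'$ on the cuspidal line $\ZZ[\rho]$, using Theorem \ref{T:PInt}(2) to handle the pole orders $\Lambda$, Theorem \ref{T:PInt}(5) to translate compositions of intertwiners into information on $\fd$, Lemma \ref{L:cus} for the reducibility structure of $\rho\times\rho'$, and—crucially—the formula for $\alpha$ from \cite[1.6]{Sil80} together with its modular avatar in \cite[\S 8.3]{Dat05}, exactly as invoked in the proof of \Cref{L:int1}.

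\textbf{Step 1: the $\alpha$-invariant.} Silberger's formula expresses $\alpha(\rho,\rho',P)$ as a $q$-adic valuation attached to the ratio of normalizing $L$-factors of the standard intertwining operator, and this depends only on the position of $[\rho']$ in $\ZZ[\rho]$. For $o(\rho)>1$ the formula vanishes except at the three special points: it gives $-2$ at $\rho'\cong\rho$ (the irreducibility point between the two reducibility points at $\rho|-|^{\pm 1}$), $1$ at $\rho'\cong\rho|-|$ when $o(\rho)>2$, and $2$ when $o(\rho)=2$ (in which case $\rho|-|\cong\rho|-|^{-1}$ so the two reducibility contributions collapse onto a single point). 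This gives the entire second display of the lemma.

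\textbf{Step 2: the pole order $\Lambda$ for $\rho'\not\cong\rho$.} Here $M_{\rho,\rho'}$ is a non-zero map between either two irreducibles or two length-two modules with well-understood socle-cosocle structure from Lemma \ref{L:cus}; in all these cases the defining intertwining integral converges at the identity (its kernel on $\bar w\setminus w$ matches what Theorem \ref{T:PInt}(2) requires), so $\Lambda(\rho,\rho')=0$.

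\textbf{Step 3: the pole order $\Lambda$ for $\rho'\cong\rho$.} Since $o(\rho)>1$ the reducibility of $\rho\times\chi\rho$ happens only for $\chi\cong|-|^{\pm 1}$, so $\rho\times\rho$ is irreducible and $M_{\rho,\rho}$ is a scalar. The lower bound $\Lambda(\rho,\rho)\ge 1$ follows formally from the inequality $\fd(\rho,\rho)=2\Lambda(\rho,\rho)+\alpha(\rho,\rho)\ge 0$ of Theorem \ref{T:PInt}(5) combined with $\alpha(\rho,\rho)=-2$ from Step 1. For the matching upper bound I would use Theorem \ref{T:PInt}(2): when $o(\rho)>1$, the only zero of the normalizing factor $1-q^{-s}$ that one meets at $s=0$ mod $\ell$ is simple, which translates into the operator having a simple pole and no worse, hence $\Lambda(\rho,\rho)\le 1$.

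The main obstacle is pinning down $\Lambda(\rho,\rho)$ precisely. The lower bound is formal, but the upper bound genuinely uses the explicit shape of the normalizing $L$-factor and the defining integral at the identity. Once this is controlled, all other cases follow by purely formal manipulations: in the reducible cases $\rho'\cong\rho|-|^{\pm 1}$ one checks (via socle/cosocle analysis using Lemma \ref{L:cus}) that $M_{\rho',\rho}\circ M_{\rho,\rho'}$ vanishes, which combined with $\Lambda(\rho,\rho')=\Lambda(\rho',\rho)=0$ and $\fd\ge 0$ is consistent with the values of $\alpha$ produced in Step 1; conversely in the fully irreducible cases the composition is a non-zero scalar, forcing $\fd=0$ and hence $\alpha=0$, matching the ``otherwise'' clause.
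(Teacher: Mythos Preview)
Your overall strategy coincides with the paper's: both compute $\alpha$ by citing \cite[1.6]{Sil80} and \cite[\S 8]{Dat05}, and both dispose of $\Lambda(\rho,\rho')=0$ for $\rho'\not\cong\rho$ via \Cref{T:PInt}(2). The difference is in Step~3, and there your argument is more complicated than necessary and not cleanly justified by the tools you cite.

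You observe that $\rho\times\rho$ is irreducible and that $M_{\rho,\rho}$ is a scalar, but then you abandon this and instead try to squeeze $\Lambda(\rho,\rho)$ between a lower bound (from $\fd\ge 0$) and an upper bound (from a claimed simple zero of a normalizing factor). The upper bound is the weak point: \Cref{T:PInt}(2) only characterizes when $\Lambda=0$, not when $\Lambda\le 1$, and the ``simple zero of $1-q^{-s}$'' reasoning requires unpacking Dat's construction in a way that is neither recorded in \Cref{T:PInt} nor spelled out in your sketch. The paper instead uses the observation you already made: since $M_{\rho,\rho}$ is a nonzero endomorphism of an irreducible representation, it is an isomorphism; hence $M_{\rho,\rho}\circ M_{\rho,\rho}$ is a nonzero scalar, which by \Cref{T:PInt}(5) forces $\fd(\rho,\rho)=0$ exactly. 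Combined with $\alpha(\rho,\rho)=-2$ this gives $\Lambda(\rho,\rho)=1$ on the nose, with no separate upper-bound argument needed. Your final paragraph of consistency checks is correct but superfluous once $\alpha$ is taken from Silberger.
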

\begin{proof}
Regarding $\alpha$, the claim follows again from \cite[1.6]{Sil80}, see also \cite[8.4]{Dat05}.
Since $\rho\times \rho$ is irreducible in the case $o(\rho)>1$, it follows that $M_{\rho,\rho}$ has to be an isomorphism. Thus $\fd(\rho,\rho)=0$ and hence $\Lambda(\rho,\rho)=1$.
If $\rho'\ncong \rho$, $\Lambda(\rho,\rho')=0$ by \Cref{T:PInt}(2).
\end{proof}
\begin{lemma}\label{L:int2}
    Let $\De=[1,o(\rho)]_\rho$.
    Then $M_{\Z(\De),\Z(\De)}$ is a morphism different from a scalar and $\Lambda(\Z(\De),\Z(\De))=0$.
\end{lemma}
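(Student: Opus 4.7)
I would follow the template of \Cref{L:int1}: derive a contradiction from the assumption that $M_{\Z(\Delta), \Z(\Delta)}$ is a scalar, and then read off $\Lambda(\Z(\Delta), \Z(\Delta)) = 0$ from the equivalence in \Cref{T:PInt} (2).

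The first task is to compute $\alpha(\Z(\Delta), \Z(\Delta))$. By \Cref{T:msZ} (5), $\Z(\Delta)$ embeds into the product of cuspidals $\rho\as \times \rho\as^2 \times \cdots \times \rho\as^{o(\rho)}$, so $\Z(\Delta) \otimes \Z(\Delta)$ is an irreducible Levi-subquotient of
\[\tau \coloneq (\rho\as \times \cdots \times \rho\as^{o(\rho)}) \otimes (\rho\as \times \cdots \times \rho\as^{o(\rho)}).\]
By \Cref{T:PInt} (6), $\alpha(\Z(\Delta), \Z(\Delta)) = \alpha(\tau)$. Iterating the additivity (7) in the first factor, combined with its natural analogue in the second (available via the same formalism applied to the opposite parabolic), the computation reduces to
\[\alpha(\Z(\Delta), \Z(\Delta)) = \sum_{i,j = 1}^{o(\rho)} \alpha(\rho\as^i, \rho\as^j),\]
whose terms are read off from \Cref{L:basecase}: the self-pairings $(i = j)$ each contribute $-2$, and the forward-shift pairings $(j \equiv i+1 \pmod{o(\rho)})$ contribute $+1$ (or $+2$ when $o(\rho) = 2$), with all remaining contributions zero.

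Second, mimicking \Cref{L:int1}: suppose $M_{\Z(\Delta), \Z(\Delta)}$ were a scalar $\lambda \cdot \mathrm{id}$ with $\lambda \neq 0$. Then \Cref{T:PInt} (2) forces $\Lambda > 0$; invertibility of $M$ makes $M \circ M' \neq 0$, so by \Cref{T:PInt} (5) we obtain $\fd = \Lambda + \Lambda' + \alpha = 0$. Combined with $\Lambda, \Lambda' \geq 0$ from \Cref{T:PInt} (4), this yields $\Lambda + \Lambda' = -\alpha$, which contradicts $\Lambda > 0$ precisely when $\alpha = 0$. This closes the argument cleanly when $o(\rho) \leq 2$, where the sum above telescopes to zero. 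For $o(\rho) > 2$ the dichotomy is not resolved by $\alpha$ alone, and I would instead use \Cref{T:PInt} (1) to identify $M_{\Z(\Delta), \Z(\Delta)}$ with the restriction of the block-swap intertwiner on the full cuspidal product, decompose the latter via (8)--(9) into length-one pieces, and exploit the cyclic structure of the cuspidal line together with the aperiodicity of $\Delta + \Delta$ to exhibit two non-isomorphic constituents of $\Z(\Delta) \times \Z(\Delta)$ that the intertwiner exchanges non-trivially; this rules out any scalar.

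Finally, with $M_{\Z(\Delta), \Z(\Delta)}$ non-scalar, the converse direction of \Cref{T:PInt} (2) immediately gives $\Lambda(\Z(\Delta), \Z(\Delta)) = 0$.

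\textbf{Main obstacle.} The delicate case is $o(\rho) > 2$, where $\alpha$ itself does not vanish and the \Cref{L:int1}-style contradiction is not automatic. Producing the non-trivial permutation of Zelevinsky constituents by $M$ — equivalently, showing that $\End(\Z(\Delta) \times \Z(\Delta))$ contains an element not proportional to the identity and that the canonically normalized intertwiner realizes it — is the technical heart of the proof and requires a careful Jordan--Hölder analysis in the $\fl$-modular Zelevinsky classification.
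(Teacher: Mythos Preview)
Your strategy matches the paper's, but the computation of $\alpha$ contains a slip that creates a phantom obstacle. You count only the forward shifts $j \equiv i+1$; however $\alpha$ depends only on the Levi representation and is invariant under the Weyl swap (this is part of \Cref{T:PInt}(4), reflecting the symmetry of the $j$-function in \cite{Dat05}), so $\alpha(\rho\as^i, \rho\as^{i-1}) = \alpha(\rho\as^{i-1}, \rho\as^i) = 1$ as well for $o(\rho) > 2$. Equivalently, just apply \Cref{L:basecase} with the roles of $\rho$ and $\rho'$ exchanged; the $o(\rho)=2$ value $+2$ in that lemma is precisely the coalescence of the two $+1$'s when the forward and backward shifts coincide. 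Including the backward shifts, the sum is $-2\,o(\rho) + 2\,o(\rho) = 0$ for every $o(\rho) > 1$. With $\alpha = 0$ uniformly, your \Cref{L:int1}-style contradiction closes in all cases, and the Jordan--H\"older argument you flag as the ``main obstacle'' is not needed.

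A smaller point: your final line invokes ``$M$ not a scalar $\Rightarrow \Lambda = 0$'' as the converse direction of \Cref{T:PInt}(2), but the iff there is between $\Lambda = 0$ and a support condition on $f \mapsto M(f)(1_n)$, not literally ``non-scalar''. For cuspidal $\rho$ these coincide because $r_{\deg\rho,\deg\rho}(\rho \times \rho)$ has only the open and closed pieces; for $\Z(\Delta)$ the Geometric Lemma filtration of $r_{\deg\Delta,\deg\Delta}(\Z(\Delta)^{\times 2})$ is longer, and the paper supplies the missing check: among all its subquotients, only the open copy of $\Z(\Delta) \otimes \Z(\Delta)$ and the closed Frobenius quotient admit a nonzero map to $\Z(\Delta) \otimes \Z(\Delta)$, so if $\Lambda > 0$ kills the open contribution (which is what (2) says) the Frobenius adjoint of $M$ must factor through the closed piece, forcing $M$ to be a scalar after all.
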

\begin{proof}
The case $o(\rho)=1$ is \Cref{L:int1}, hence we can assume that $o(\rho)>1$.
    Using the above and \Cref{T:PInt} we can show that $\alpha(\Z(\De),\Z(\De))=0$. Thus $\fd(\Z(\De),\Z(\De))$ is $0$ if and only if $\Lambda(\Z(\De),\Z(\De))$ is $0$ if and only if $M_{\Z(\De),\Z(\De)}$ is an isomorphism.
    Moreover, by \Cref{T:PInt}(2), any of the above points would imply that $M_{\Z(\De),\Z(\De)}$ is not a scalar.
  
   Finally, if $\Lambda(\Z(\De),\Z(\De))>0$, we have that the map obtained from $M_{\Z(\De),\Z(\De)}$ by Frobenius reciprocity, $M'\colon r_{\deg(\De),\deg(\De)}(\Z(\De)^{\times 2})\ra \Z(\De)\otimes\Z(\De)$ vanishes on the subrepresentation $\Z(\De)\otimes\Z(\De)$ coming from the Geometric Lemma. But the Geometric Lemma also shows that there exists only one other subquotient admitting a map to $\Z(\De)\otimes \Z(\De)$, namely the quotient
   \[r_{\deg(\De),\deg(\De)}(\Z(\De)^{\times 2})\sra \Z(\De)\otimes\Z(\De)\] coming from Frobenius reciprocity. But this would imply that $M_{\Z(\De),\Z(\De)}$ is a scalar, something that we already excluded.
\end{proof}
\begin{corollary}\label{L:directsum}
    Assume $\ell\neq 2$. Then
    \[\Z(\De)\times\Z(\De)=\Z([1,2o(\rho)]_\rho)\oplus \Z(\De+\De).\]
\end{corollary}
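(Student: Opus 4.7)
The plan is to use the intertwining operator from \Cref{L:int2} to produce an eigenspace decomposition of $\Z(\De)\times\Z(\De)$, and then identify each eigenspace by exhibiting explicit irreducible subrepresentations and bounding the total length via a Jacquet-module calculation.

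By \Cref{L:int2}, the operator $M:=M_{\Z(\De),\Z(\De)}$ is a non-scalar isomorphism of $\Z(\De)\times\Z(\De)$. Since $\Z(\De)\otimes\Z(\De)$ is irreducible as a Levi representation, \Cref{T:PInt}(5) applied to the open orbit yields that $M\circ M$ is a scalar, which must be nonzero as $M$ is invertible. Because $\ell\neq 2$, we can rescale $M$ so that $M^2=\id$, and the two idempotents $\tfrac{1}{2}(\id\pm M)$ give a direct sum decomposition
\[
\Z(\De)\times\Z(\De)=V_+\oplus V_-,
\]
with both $V_{\pm}$ nonzero since $M$ is not a scalar.

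Next, I would exhibit two non-isomorphic irreducible subrepresentations, one in each summand. The representation $\Z(\De+\De)$ embeds into $\Z(\De)\times\Z(\De)$ by the Zelevinsky classification in \Cref{T:msZ}. For the long-segment representation, one computes using \Cref{L:rese} with $u=o(\rho)$, together with the identification $\rho\as^{o(\rho)+i}\cong\rho\as^i$ (so that $[o(\rho)+1,2o(\rho)]_\rho$ is the segment $\De$),
\[
r_{\deg(\De),\deg(\De)}\bigl(\Z([1,2o(\rho)]_\rho)\bigr)=\Z(\De)\otimes\Z(\De).
\]
Frobenius reciprocity then produces an embedding $\Z([1,2o(\rho)]_\rho)\hookrightarrow\Z(\De)\times\Z(\De)$. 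As the two irreducible subrepresentations are non-isomorphic, they are stable under $M$ and lie in distinct eigenspaces; say $\Z(\De+\De)\subseteq V_+$ and $\Z([1,2o(\rho)]_\rho)\subseteq V_-$.

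It remains to verify that $\Z(\De)\times\Z(\De)$ has length exactly two, which will force $V_{\pm}$ to be irreducible and equal to the respective embedded subrepresentations. Applying the Geometric Lemma to $P=Q=P_{\deg(\De),\deg(\De)}$, the double coset space has size two, and the induced filtration on $r_{\deg(\De),\deg(\De)}(\Z(\De)\times\Z(\De))$ has exactly two layers, each isomorphic (up to a character twist) to $\Z(\De)\otimes\Z(\De)$. Since any irreducible subquotient $\tau$ of $\Z(\De)\times\Z(\De)$ has cuspidal support $2\,\mathrm{cusp}(\De)$ which admits a non-trivial splitting through the Levi, Frobenius/Bernstein reciprocity applied to this splitting forces $r_{\deg(\De),\deg(\De)}(\tau)\neq 0$. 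Hence the length of $\Z(\De)\times\Z(\De)$ is bounded by the length of its Jacquet module, namely two; combined with the two distinct embeddings above, the length equals two exactly.

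The main obstacle is the last step: ruling out a hypothetical third constituent $\tau$ with vanishing $r_{\deg(\De),\deg(\De)}(\tau)$. Since such a $\tau$ cannot be cuspidal (its cuspidal support is not irreducible), at least one of $r_{P}(\tau)$ or $\overline{r_{P}}(\tau)$ is nonzero, and the same length-count argument applied to the opposite parabolic (using that $\overline{r_{P}}(\Z([1,2o(\rho)]_\rho))=\Z(\De)\otimes\Z(\De)$ as well, by the duality $(r_P(-))^{\vee}=\overline{r_P}((-)^{\vee})$ together with \Cref{L:rese}) completes the argument.
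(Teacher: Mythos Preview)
Your eigenspace decomposition via $M=M_{\Z(\De),\Z(\De)}$ is correct and is exactly how the paper begins. The gap is in your length bound. For $P=Q=P_{k,k}$ with $k=\deg(\De)$, the double coset space $P\backslash G_{2k}/P$ is in bijection with $(S_k\times S_k)\backslash S_{2k}/(S_k\times S_k)$, which has $k+1$ elements, not two. Using \Cref{L:rese}, the layers of the Geometric Lemma filtration of $r_{k,k}(\Z(\De)\times\Z(\De))$ that survive are indexed by $j\in\{0,1,\ldots,o(\rho)\}$, and for $0<j<o(\rho)$ the $j$-th layer is
\[
\bigl(\Z([1,j]_\rho)\times\Z([j+1,o(\rho)]_\rho)\bigr)\otimes\bigl(\Z([j+1,o(\rho)]_\rho)\times\Z([1,j]_\rho)\bigr),
\]
which is nonzero and not isomorphic to $\Z(\De)\otimes\Z(\De)$. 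Hence as soon as $o(\rho)>1$ the Jacquet module has length strictly greater than two and your inequality gives nothing. The opposite-parabolic variant in your last paragraph suffers the same over-count. (When $o(\rho)=1$ your argument does go through, but that case is already \Cref{L:cus}.)

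The paper does not attempt a direct length count. What the Geometric Lemma \emph{does} give, and this is observed in the proof of \Cref{L:int2}, is that among all layers only the two extreme ones $j=0$ and $j=o(\rho)$ admit $\Z(\De)\otimes\Z(\De)$ as a subquotient; by Frobenius reciprocity this yields $\dim_{\fl}\ed(\Z(\De)\times\Z(\De))=2$, and hence $\dim_{\fl}\ed(V_{+})=\dim_{\fl}\ed(V_{-})=1$. The paper then invokes \cite[Theorem~6.4.1]{Dro24} to see that the only possible irreducible constituents are $\Z([1,2o(\rho)]_\rho)$ and $\Z(\De+\De)$, the latter with multiplicity one; the endomorphism constraint then forces each $V_{\pm}$ to be irreducible. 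If you want to avoid that external citation you need some other device to rule out extra copies of $\Z([1,2o(\rho)]_\rho)$; the Jacquet-module length alone will not do it.
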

\begin{proof}
   Since $M_{\Z(\De),\Z(\De)}$ is a non-zero morphism squaring to a scalar and $\ell\neq 2$, $\Z(\De)\times \Z(\De)=\pi_1\oplus\pi_2$ with $\pi_i\neq 0$. Moreover, we saw during the above proof that $\ed(\Z(\De),\Z(\De))$ is two dimensional and hence $\ho(\pi_1,\pi_2)= 0$. We know by \cite[Theorem 6.4.1]{Dro24} that the only irreducible subquotients of $\Z(\De)\times \Z(\De)$ are of the form $\Z([1,2o(\rho)]_\rho)$ or $\Z(\De+\De)$, the latter of which appears with multiplicity one. It is therefore easy to see that $\pi_1$ and $\pi_2$ have to be irreducible and of the desired form.
\end{proof}
Let $\De=[1,o(\rho)]_\rho$.
For the rest of the paper, we normalize $M_{\Z(\De),\Z(\De)}$ as follows. In the case $\ell\neq 2$, $\Z(\De)\times \Z(\De)=\Z([1,2o(\rho)]_\rho)\oplus \Z(\De+\De)$ is semi-simple. We demand that $M_{\Z(\De),\Z(\De)}^{2}=\fo_{\Z(\De)\times \Z(\De)}$ and that the kernel of $\fo_{\Z(\De)\times \Z(\De)}-M_{\Z(\De),\Z(\De)}$ equals $\Z([1,2o(\rho)]_\rho)$.

If $\ell=2$, and hence $o(\rho)=1$, we saw that $M_{\rho,\rho}$ is an isomorphism, which is not a scalar.
We demand that $M_{\rho,\rho}^{ 2}=1$, and hence $M_{\rho,\rho}-\fo_{\rho\times\rho}$ is a non-zero morphism, which squares to $0$. We note at this point that the image of $M_{\rho,\rho}-\fo_{\rho\times\rho}$ has to be $\Z([0,1]_\rho)$ by \Cref{L:cus}.
\section{Representations and their Endomorphisms}\label{S:move}
In this section we study representations of the form 
\[\Z(\De)^{\times n},\, \De=[1,o(\rho)]_\rho,\, n\in \mathbb{N}.\]
For the rest of the section we fix a cuspidal representation $\rho$ with $\rho^\lor\cong \rho\lvert-\lvert$. Moreover, for $\pi\in\rep$, we will consider $\ed(\pi)$ not just as a vector-space, but rather as an algebra with multiplication given by composition.

To ease notation, we will write $d=o(\rho)$ and $[a,b]_\rho=[a,b]$. Moreover, we fix $\De\coloneq [1,d]$ and set $\Pi_n\coloneq \Z(\De)^{\times n}$. Note that $\Pi_n$ is self-dual, since $\Z(\De)^\lor\cong \Z(\De)$.
By \Cref{T:PInt}(8) and (9) together with \Cref{L:int1}, \Cref{L:int2}, we can define the map
    \[T\colon \fl[S_n]\ra \ed(\Pi_n)\] sending $w\in S_n\mapsto T(w)\coloneq M_{\Z(\De)^{\otimes n}, w,P_{d,\ldots,d}}$. Note that $M_{\Z(\De)^{\otimes n}, w,P_{d,\ldots,d}}$ is only defined up to a non-zero scalar, which we pin down by choosing for a simple transposition $s$ the corresponding intertwining operator as in the end of the last section.
\begin{corollary}\label{C:important}
The map
    \[T\colon\fl[S_n]\ra \ed(\Pi_n)\]
    is an isomorphism of algebras.
\end{corollary}
\begin{proof}
     By \Cref{T:PInt}(2) the $T(w)$'s are seen to be linearly independent via Frobenius reciprocity. We conclude that $T$ is injective.
    Moreover, it is easy to see from Frobenius reciprocity and the Geometric Lemma that 
    \[\dim_\fl \ed(\Pi_n)\le n!.\]
    Thus the above map is an isomorphism.
\end{proof}
\begin{lemma}\label{C:dual}
    The natural isomorphism $(-)^\lor\colon \ed(\Pi_n)\ra \ed(\Pi_n^\lor)\cong \ed({\Pi_n})$ corresponds under the above morphism $T$ to the map $w\mapsto w^{-1}$.
\end{lemma}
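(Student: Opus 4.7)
The proof strategy is to reduce the claim to a local computation on $\Z(\De)\times\Z(\De)$ via two successive reductions. First, both $f\mapsto f^\lor$ (post-composed with the self-duality identification $\ed(\Pi_n^\lor)\cong\ed(\Pi_n)$) and $w\mapsto w^{-1}$ are anti-involutions of their respective algebras, since composition is reversed by duality and by inversion in $S_n$, and each squares to the identity. As $S_n$ is generated by the simple reflections $s_1,\ldots,s_{n-1}$ and each satisfies $s_i=s_i^{-1}$, it suffices to verify $T(s_i)^\lor=T(s_i)$ for every $i$.

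For the second reduction, \Cref{T:PInt}(9) says that $T(s_i)$ is the parabolic induction, from the parabolic with Levi $G_r^{i-1}\times G_{2r}\times G_r^{n-i-1}$, of the local intertwining operator $M_{\Z(\De),\Z(\De)}$ on the $G_{2r}$-slot (with identities on all other slots). The natural self-duality $\Pi_n^\lor\cong\Pi_n$ is built tensor-factor-wise from $\Z(\De)^\lor\cong\Z(\De)$, with induction from the opposite parabolic identified with induction from the standard one via the $S_n$-symmetry of $\Z(\De)^{\otimes n}$. Combined with the compatibility of contragredient and normalized parabolic induction, the claim reduces to showing
\[M_{\Z(\De),\Z(\De)}^\lor=M_{\Z(\De),\Z(\De)}\]
as endomorphisms of $\Z(\De)\times\Z(\De)$ under the induced self-duality.

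I would then carry out the local verification in two cases. If $\ell\neq 2$, \Cref{L:directsum} gives the decomposition $\Z(\De)\times\Z(\De)=\Z([1,2o(\rho)]_\rho)\oplus\Z(\De+\De)$ into two non-isomorphic self-dual irreducibles (self-duality using $\rho^\lor\cong\rho\as$ and $\rho\as^{o(\rho)}\cong\rho$ to see $[1,2o(\rho)]_\rho^\lor=[1,2o(\rho)]_\rho$ and $\De^\lor=\De$). Hence $\ed(\Z(\De)\times\Z(\De))\cong\fl\times\fl$ has two minimal idempotents $P_1,P_2$, which cannot be interchanged by any algebra (anti-)automorphism since that would force the summands to be isomorphic; therefore $(-)^\lor$ fixes both idempotents, and so fixes $M_{\Z(\De),\Z(\De)}=-P_1+P_2$. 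If $\ell=2$ then $o(\rho)=1$, so $\ed(\rho\times\rho)\cong\fl[x]/(x^2)$ is two-dimensional with basis $\{1,N\}$ where $N\coloneq M_{\rho,\rho}-1$; the dual $M_{\rho,\rho}^\lor$ is again a non-scalar involution with $(M_{\rho,\rho}^\lor-1)^2=0$, so $M_{\rho,\rho}^\lor-1=eN$ for some $e\in\fl^\times$, and applying $(-)^\lor$ twice forces $e^2=1$, which yields $e=1$ in characteristic $2$.

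The main obstacle is the compatibility in the second paragraph: pinning down precisely which self-duality $\Pi_n^\lor\cong\Pi_n$ is used, and verifying that under this identification $T(s_i)^\lor$ is genuinely the parabolic induction of $M_{\Z(\De),\Z(\De)}^\lor$. Once this compatibility is established, the two local case analyses are short and essentially linear-algebraic.
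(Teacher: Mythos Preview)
Your approach is essentially the paper's: reduce to simple reflections via the anti-homomorphism property (the paper phrases this as an appeal to \Cref{T:PInt}(8)), then use \Cref{T:PInt}(9) to pass to $\Z(\De)\times\Z(\De)$, and finally invoke the normalizations fixed after \Cref{L:directsum}. Your local verification is more explicit than the paper's one-line ``follows immediately from the normalizations,'' and your $\ell=2$ argument goes through because $\ed(\rho\times\rho)\cong\fl[y]/(y^2)$ is commutative, so the double application of $(-)^\lor$ (which a priori is only conjugation by some element) is genuinely the identity; the obstacle you flag about compatibility of the self-duality with parabolic induction is exactly the point the paper sweeps under the rug.
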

\begin{proof}
    It suffices to check for any $w\in S_n$ that \[M_{\Z(\De)^{\otimes n}, w,P_{d,\ldots,d}}^\lor=M_{\Z(\De)^{\otimes n}, w^{-1},P_{d,\ldots,d}}.\] 
    Let $s_{i_1}\cdots s_{i_k}$ be a reduced expression of $w$.
    By \Cref{T:PInt}(8), we can write
    \[M_{\Z(\De)^{\otimes n}, w,P_{d,\ldots,d}}^\lor=M_{\Z(\De)^{\otimes n}, {s_{i_k}},P_{d,\ldots,d}}^\lor\circ\ldots \circ M_{\Z(\De)^{\otimes n}, {s_{i_1}},P_{d,\ldots,d}}^\lor \]
    and 
    \[M_{\Z(\De)^{\otimes n}, w^{-1},P_{d,\ldots,d}}=M_{\Z(\De)^{\otimes n}, {s_{i_k}},P_{d,\ldots,d}}\circ\ldots \circ M_{\Z(\De)^{\otimes n}, {s_{i_1}},P_{d,\ldots,d}}.\]
    It thus suffices to prove the claim for simple reflections $s_i$.
    By \Cref{T:PInt}(9) it suffices to prove the claim for $n=2$ and $s$ the unique reflection in $S_2$.
    The claim then follows from the discussion after \Cref{L:directsum}.
\end{proof}
Let us make the following definition. For $\pi$ a subquotient of $\Pi_n$, we note that by considering central characters and the Geometric Lemma, $r_{d\alpha}(\pi)=\pi_1\oplus\pi_2$, where all irreducible subquotients of $\pi_1$ are subquotients of $\Pi_{\alpha_1}\otimes\ldots\otimes \Pi_{\alpha_k}$ and $\pi_2$ does not contain any such subquotients. We denote 
\[\rs_\alpha(\pi)\coloneq \pi_1.\]
Let $\alpha=(1,\ldots,1)$. As a corollary of the proof of \Cref{C:important}, we obtain the following.
\begin{corollary}
    Let $\pi$ be a subquotient of $\Pi_n$. Then $\rs_{1,\ldots,1}(\pi)$ is semi-simple.
\end{corollary}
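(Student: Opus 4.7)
The plan is to reduce to the case $\pi=\Pi_n$ and then verify that $\rs_{1,\ldots,1}(\Pi_n)$ is semi-simple by showing its socle already fills it up. Since the decomposition $r_{d\alpha}(\pi)=\pi_1\oplus \pi_2$ by central characters is functorial, $\rs_{1,\ldots,1}$ is exact, being a direct summand of the exact functor $r_{d,\ldots,d}$. Hence for any subquotient $N/M$ of $\Pi_n$, the representation $\rs_{1,\ldots,1}(N/M)$ embeds as a subquotient of $\rs_{1,\ldots,1}(\Pi_n)$, and subquotients of semi-simple representations are semi-simple.

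For $\pi=\Pi_n$, I would first determine the length of $\rs_{1,\ldots,1}(\Pi_n)$ via the Geometric Lemma applied to $r_{d,\ldots,d}\circ \id_{P_{d,\ldots,d}}^{G_{nd}}(\Z(\De)^{\otimes n})$. The Levi $G_d^n$ has equal-sized blocks, so every $w\in S_n$ normalizes it; consequently the subgroup $V'$ appearing in the Geometric Lemma is trivial, and the $w$-graded piece collapses to $\ad(w)(\Z(\De)^{\otimes n})$ twisted by a modulus character. Using \Cref{L:rese} together with the periodicity $\rho\as^{o(\rho)}\cong\rho$, each of the $n!$ graded pieces is isomorphic to $\Z(\De)^{\boxtimes n}$. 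In particular $\rs_{1,\ldots,1}(\Pi_n)$ has length $n!$, and every composition factor is isomorphic to $\Z(\De)^{\boxtimes n}$.

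Next I would count the socle multiplicity $\dim_\fl\ho(\Z(\De)^{\boxtimes n},\rs_{1,\ldots,1}(\Pi_n))$. Matching central characters identifies this with $\dim_\fl\ho(\Z(\De)^{\boxtimes n},r_{d,\ldots,d}(\Pi_n))$. Applying the duality $(r_\alpha(-))^\vee\cong \ol{r}_\alpha((-)^\vee)$ followed by Bernstein's second adjunction, and using the self-dualities of $\Pi_n$ and $\Z(\De)^{\boxtimes n}$ together with the identification $\Z(\De)\oltimes\cdots\oltimes\Z(\De)\cong \Pi_n$ (valid since $\ol{\alpha}=\alpha$ for equal-sized blocks), this Hom-space becomes $\ed(\Pi_n)$, of dimension $n!$ by \Cref{C:important}. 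Since $\rs_{1,\ldots,1}(\Pi_n)$ has length $n!$ and the socle multiplicity of its unique simple constituent already equals $n!$, the socle coincides with the whole representation, proving semi-simplicity.

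The main obstacle is the first step, where one must verify that the modulus twists $\delta_1,\delta_2$ and the adjoint action $\ad(w)$ appearing in the Geometric Lemma do not change the isomorphism class of the graded pieces. This is precisely where the finite order of the cuspidal line enters: the relevant shifts of $\Z(\De)$ are absorbed by $\rho\as^{o(\rho)}\cong\rho$, ensuring that all $n!$ graded pieces are genuinely isomorphic to $\Z(\De)^{\boxtimes n}$ rather than to various twists thereof.
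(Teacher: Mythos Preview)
Your argument is correct and follows the same strategy as the paper: reduce to $\pi=\Pi_n$ by exactness, bound the length of $\rs_{1,\ldots,1}(\Pi_n)$ by $n!$ via the Geometric Lemma, and match this against $\dim_\fl\ed(\Pi_n)=n!$ from \Cref{C:important}. The paper reads off the \emph{cosocle} multiplicity directly from Frobenius reciprocity $\ed(\Pi_n)\cong\ho(r_{d,\ldots,d}(\Pi_n),\Z(\De)^{\otimes n})$, whereas you take the slightly longer route of dualizing and invoking second adjunction to get the \emph{socle} multiplicity; both yield the same conclusion.

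Two small imprecisions are worth flagging. First, unless $\deg(\De)=1$ the double cosets $P_{d,\ldots,d}\backslash G_{nd}/P_{d,\ldots,d}$ are parametrized by $(S_d)^n\backslash S_{nd}/(S_d)^n$, which is strictly larger than $S_n$; only the $n!$ block-permutation cosets have $V'$ trivial. The remaining pieces involve proper Jacquet modules of $\Z(\De)$, and in each $G_d$-factor their cuspidal support differs from that of $\Z(\De)$, so they do not contain $\Z(\De)^{\otimes n}$ as a subquotient and hence lie in the complementary summand $\pi_2$. Your length count for $\rs$ is therefore correct, but the justification should acknowledge and discard these extra pieces. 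Second, your final paragraph misidentifies the obstacle: for the block-permutation cosets $w\in S_n$ the modulus character $\delta$ is already trivial and $\ad(w)$ merely permutes identical tensor factors, so neither \Cref{L:rese} nor the periodicity $\rho\as^{o(\rho)}\cong\rho$ is actually needed at that step.
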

\begin{proof}
    We saw in the proof of \Cref{C:important} that $\rs_{1,\ldots,1}(\Pi_n)$ is semi-simple of length $n!$. By exactness of the Jacquet functor, the same is true for any subquotient.
\end{proof}
\begin{corollary}\label{L:quot}
    Let $\pi$ be a subrepresentation of $\Pi_n$ and $\tau$ the corresponding quotient. Then we have short exact sequences
    \[0\ra \ho(\tau,\Pi_n)\ra \ho(\Pi_n,\Pi_n)\ra \ho(\pi,\Pi_n)\ra 0,\]
    \[0\ra \ho(\Pi_n,\pi)\ra \ho(\Pi_n,\Pi_n)\ra \ho(\Pi_n,\tau)\ra 0.\]
\end{corollary}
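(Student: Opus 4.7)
The plan is to prove the first short exact sequence directly and then obtain the second via the self-duality $\Pi_n^\lor \cong \Pi_n$. For the first, left-exactness of $\ho(-,\Pi_n)$ is automatic, so the content is the surjectivity $\ho(\Pi_n,\Pi_n) \sra \ho(\pi,\Pi_n)$: every morphism $\pi \ra \Pi_n$ must extend to an endomorphism of $\Pi_n$.

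The key reduction uses Frobenius reciprocity. Let $\alpha$ be the composition of $\deg(\Pi_n)$ with $n$ entries each equal to $\deg(\Pi_1)$, so that $\Pi_n = \id_{P_\alpha}^{G_{\deg(\Pi_n)}}(\Pi_1^{\otimes n})$. Since $r_\alpha$ is left adjoint to $\id_{P_\alpha}$, one has, naturally in $X \in \rep$, $\ho(X,\Pi_n) = \ho(r_\alpha(X),\Pi_1^{\otimes n})$. Because $\Pi_1^{\otimes n}$ has cuspidal support concentrated on the line $\ZZ[\rho]$, every such map factors through the $\rho$-line summand $\rs_{1,\ldots,1}(X)$ of $r_\alpha(X)$, yielding a natural identification
\[\ho(X,\Pi_n) \;\cong\; \ho(\rs_{1,\ldots,1}(X),\Pi_1^{\otimes n}).\]
This identification is the main conceptual step; everything after is formal.

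Next, I would apply $\rs_{1,\ldots,1}$ to the short exact sequence $0 \ra \pi \ra \Pi_n \ra \tau \ra 0$. As $r_\alpha$ is exact and $\rs_{1,\ldots,1}$ is cut out from $r_\alpha$ by a central-character condition, $\rs_{1,\ldots,1}$ is itself exact, so we obtain an exact sequence $0 \ra \rs_{1,\ldots,1}(\pi) \ra \rs_{1,\ldots,1}(\Pi_n) \ra \rs_{1,\ldots,1}(\tau) \ra 0$. This sequence splits because $\rs_{1,\ldots,1}(\Pi_n)$ is semi-simple by the preceding corollary. Applying $\ho(-,\Pi_1^{\otimes n})$ then yields a short exact sequence, which by the identification above is precisely the first claimed sequence. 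The second sequence follows by duality: $\Pi_n \cong \Pi_n^\lor$ gives $\ho(X,\Pi_n) \cong \ho(\Pi_n, X^\lor)$, so the first sequence applied to the dual short exact sequence $0 \ra \tau^\lor \ra \Pi_n \ra \pi^\lor \ra 0$ recovers the second sequence for $0 \ra \pi \ra \Pi_n \ra \tau \ra 0$.
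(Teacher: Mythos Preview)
Your proof is correct and follows essentially the same route as the paper: Frobenius reciprocity reduces $\ho(-,\Pi_n)$ to $\ho(\rs_{1,\ldots,1}(-),\Pi_1^{\otimes n})$, and the semi-simplicity of $\rs_{1,\ldots,1}(\Pi_n)$ (hence of its subobjects) makes the resulting sequence split, giving surjectivity. The only cosmetic difference is that you deduce the second sequence by dualizing, whereas the paper's one-line proof treats both sequences symmetrically via the same semi-simplicity argument; both are equally valid.
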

\begin{proof}
    We apply Frobenius reciprocity and use that $\rs_{1,\ldots,1}(\pi), \rs_{1,\ldots,1}(\tau)$ and $\rs_{1,\ldots,1}
(\Pi_n)$ are semi-simple.
\end{proof}
Let us denote by $\md_L(\Pi_n)$ the category whose objects are pairs $(\pi,i)$, where $\pi$ is a quotient of $\Pi_n$ and $i\colon \pi\hra \Pi_n$ is an embedding. Similarly, we denote by $\md_R(\Pi_n)$ the category whose objects are pairs $(\pi,p)$, where $\pi$ is a subrepresentation of $\Pi_n$ and $p\colon \Pi_n\ra \pi$ is a surjection.
In both cases we take morphisms to be morphisms between the underlying representations preserving the inclusion respectively surjection.
Note that any irreducible subrepresentation or quotient of $\Pi_n$ gives rise to an element in $\md_L(\Pi_n)$ and $\md_R(\Pi_n)$. Indeed, since $\Pi_n^\lor\cong \Pi_n$ and any subquotient of $\Pi_n$ is self-dual by \Cref{T:msZ} and \cite[Theorem 6.4.1]{Dro25}, the claim follows.

For $(\pi,i)\in \md_L(\Pi_n)$ we denote by $(\pi,i)^\lor$ the dual representation in $\md_R(\Pi_n)$ and vice versa.

Recall now the categories $\md_*^n$ of \Cref{S:symmetric}.
We then define functors for $*\in \{L,R\}$ via
    \[\im_*\colon \md_*^n\ra \md_*(\Pi_n),\, M\mapsto \mathrm{Im}(f).\] Here $f$ is the generator of $M$ and we consider $f$ via the isomorphism $T$ as an element in $\ed(\Pi_n)$. If $*=L$ We then denote by $\im(f)$ the subspace of $\Pi_n$ given by the image of $f$ under the above isomorphism, together with its natural inclusion into $\Pi_n$. On the other hand, if $*=R$, we consider $\im(f)$ as a quotient of $\Pi_n$, together with the natural projection.
    \[\ho_*\colon \md_*(\Pi_n)\mapsto \md_*^n,\, \pi\mapsto \begin{cases}
        i(\ho(\Pi_n,\pi))&\text{ if }*=L,\\
                p^*(\ho(\pi,\Pi_n))&\text{ if }*=R.
    \end{cases}\]
Replacing $n$ with a composition $\alpha$ gives rise to analogous functors.    
\begin{prop}\label{P:movimen}
    Let $*\in \{L,R\}$. Then $\im_*$ and $\ho_*$ are well defined, mutually inverse, $\id_\alpha(\im_*)=\ho_*(\id_{r\alpha})$. Moreover, $\im_L(\widetilde{M})=\im_R(M)^\lor$ and vice versa for $\ho_*$.
\end{prop}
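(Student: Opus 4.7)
The plan is to establish the four claims in sequence using the algebra isomorphism $T$ of \Cref{C:important}, the duality \Cref{C:dual}, the short exact sequences in \Cref{L:quot}, and the self-duality of every subquotient of $\Pi_n$ (from \Cref{T:msZ} and \cite[Theorem 6.4.1]{Dro25}).

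For well-definedness of $\ho_*$, the subset of $\ed(\Pi_n)\cong\fl[S_n]$ produced by the definition is generated by the single endomorphism $i\circ p$ (with $p\colon\Pi_n\sra\pi$ and $i\colon\pi\hra\Pi_n$), and that this is a cyclic submodule of the appropriate side follows directly from \Cref{L:quot}. For $\im_*$, given $M\in\md_L^n$ with two generators $f,f'$, write $f'=gf$ and $f=hf'$; this yields $\ker T(f)=\ker T(f')$, so $\im T(f)\cong\Pi_n/\ker T(f)$ is canonically a quotient of $\Pi_n$. The subrepresentation $\im T(f)\subseteq\Pi_n$ is shown to be intrinsic to $M$ by combining both short exact sequences of \Cref{L:quot}: it corresponds to the left ideal $T(M)\subseteq\ed(\Pi_n)$, which visibly depends only on $M$.

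For mutual inversion: $\ho_*\circ\im_*(M)=M$ holds since, under $T$, both sides correspond to the same left ideal of $\ed(\Pi_n)$, namely the endomorphisms factoring through $\im T(f)$, an identification supplied by combining both short exact sequences of \Cref{L:quot}. The reverse composition $\im_*\circ\ho_*(\pi,i)=(\pi,i)$ is symmetric, amounting to the observation that $i\circ p$ is a canonical generator of $\ho_*(\pi,i)$ whose image under $T$ recovers the given object exactly. For the compatibility $\id_\alpha(\im_*)=\ho_*(\id_{r\alpha})$, the main input is \Cref{T:PInt}(10), which realizes each intertwining operator for a simple reflection as a parabolic induction from the appropriate Levi; combined with transitivity of parabolic induction and the compatibility of $T$ with the natural embedding $\fl[S_\alpha]\hra\fl[S_n]$, this reduces the statement to the mutual inversion just established. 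Finally, $\im_L(\widetilde{M})=\im_R(M)^\lor$ follows from \Cref{C:dual}, since the anti-involution $w\mapsto w^{-1}$ defining $\widetilde{\cdot}$ matches the dualization $(-)^\lor$ on $\ed(\Pi_n)$, and this matching exchanges subrepresentations with quotients.

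The principal obstacle is the well-definedness of $\im_*$: showing that the a priori choice-dependent subspace $\im T(f)\subseteq\Pi_n$ is intrinsic to the ideal $M$ requires invoking both short exact sequences in \Cref{L:quot} together, which in turn rely essentially on the self-duality of $\Pi_n$ and its subquotients. Once this intrinsic description is secured, the remaining assertions reduce to direct computations or formal functorialities.
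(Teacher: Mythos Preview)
Your proposal is correct and follows essentially the same route as the paper's proof: both rely on \Cref{L:quot} for well-definedness and mutual inversion, and on \Cref{C:dual} for the duality statement. Your treatment is considerably more detailed than the paper's, which dispatches well-definedness of $\im_*$ without comment, calls the induction compatibility ``obvious,'' and cites \Cref{C:dual} for the last claim in a single sentence; your explicit analysis of the independence of $\im T(f)$ from the choice of generator, and your reduction of the induction compatibility to \Cref{T:PInt}, fill in steps the paper leaves implicit. One small correction: the property of \Cref{T:PInt} you invoke (that the intertwining operator for a simple reflection is parabolically induced from the Levi) is item (9), not (10)---the theorem has only nine items.
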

\begin{proof}
    We only treat the case $*=L$, the other one follows similarly. Since $\pi$ is a quotient of $\Pi_n$, any surjection $\Pi_n\sra \pi$ gives a generator of $i(\ho(\Pi_n,\pi))$ by \Cref{L:quot}
    This also implies that \[\im_L(\ho_L(\pi,i))=(\pi,i).\]
    The claim that $\ho_L(\im_L(M))=M$ follows immediately from \Cref{L:quot}.
    Finally, the compatibility with induction is also obvious and the compatibility with taking duals follows from \Cref{C:dual}.
\end{proof}
We denote by $\Z(\lambda)\coloneq \im_L(D_\lambda)$. 

Let $\lambda=(\lambda_1,\ldots,\lambda_k)$ be a partition of $n$. We associate to $\lambda$ a multisegment $\fm_\rho(\lambda)\coloneq [1,\lambda_1d]_\rho+\ldots+[1,\lambda_kd]_\rho$ and note that by \cite[Theorem 6.4.1]{Dro25} every irreducible subquotient of $\Pi_n$ is of the form $\Z(\fm(\lambda))$ for some partition of $n$.
\begin{lemma}\label{L:classmatch}
    Let $\lambda$ be an $\ell$-restricted partition of $n$. Then
    \[\Z(\lambda)\cong \Z(\fm_\rho(\lambda)).\]
\end{lemma}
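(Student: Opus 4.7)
The strategy is to use the equivalence of categories established in \Cref{P:movimen} to transport the irreducibility of $D_\lambda$ to $\Z(\lambda)$, and then identify $\Z(\lambda)$ with $\Z(\fm(\lambda))$ via the classification of irreducible subquotients of $\Pi_n$ together with an inductive argument.

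First I show that $\Z(\lambda) = \im_L(D_\lambda)$ is irreducible. Every subrepresentation $\sigma \subseteq \Pi_n$ is also a quotient of $\Pi_n$, since $\Pi_n$ and its irreducible subquotients are self-dual (by \Cref{T:msZ} and \cite[Theorem 6.4.1]{Dro25}); thus $\sigma$ naturally lies in $\md_L(\Pi_n)$. Under the equivalence $\ho_L\colon \md_L(\Pi_n) \to \md_L^n$ of \Cref{P:movimen}, any proper nonzero subrepresentation of $\Z(\lambda)$ would correspond to a proper nonzero submodule of $\ho_L(\Z(\lambda)) = D_\lambda$, contradicting the simplicity of $D_\lambda$. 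By \cite[Theorem 6.4.1]{Dro25}, the irreducible $\Z(\lambda)$ must then be isomorphic to $\Z(\fm(\nu))$ for a uniquely determined partition $\nu$ of $n$, defining a bijection $\Phi\colon\pa_n \to \pa_n$ with $\Z(\lambda) \cong \Z(\fm(\Phi(\lambda)))$.

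The remaining step is to verify that $\Phi = \mathrm{id}$, which I would prove by induction on $n$. For the base case of a single-part partition $\lambda = (n)$, one must show that $\im_L(D_{(n)}) \cong \Z([1, nr])$. Since $D_{(n)}$ is one-dimensional and generated by a suitable symmetrizer $e \in \fl[S_n]$, its image under $\im_L$ is the image of the corresponding combination of intertwining operators $T(e) = \sum_{w \in S_n} \pm M_w \in \ed(\Pi_n)$. Using the formulas of \Cref{T:PInt}, in particular items (6), (8), and (9), together with \Cref{L:int2} and the normalization fixed after \Cref{L:directsum}, one identifies this image with $\Z([1, nr])$. For the inductive step with $\lambda = (\lambda_1, \ldots, \lambda_k)$, $k > 1$, use the factorization $\Pi_n = \Pi_{\lambda_1} \times \Pi_{n - \lambda_1}$ and the compatibility of $\im_L$ with parabolic induction coming from \Cref{P:movimen} to realize $\Z(\lambda)$ as a subrepresentation of $\Z([1, \lambda_1 r]) \times \Z(\fm((\lambda_2, \ldots, \lambda_k)))$, and then apply \Cref{T:msZ}(4) to conclude that this subrepresentation must be $\Z(\fm(\lambda))$.

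\textbf{Main obstacle.} The principal technical challenge is the base case: the explicit identification $\im_L(D_{(n)}) \cong \Z([1, nr])$. This requires carefully controlling the ``global symmetrizer'' $T(e) = \sum_w \pm M_w$, bootstrapping from the pairwise computations of \Cref{L:int2} and the normalizations after \Cref{L:directsum} in a way that is not an immediate formal consequence of the abstract framework of \Cref{P:movimen}.
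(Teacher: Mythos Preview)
Your irreducibility argument is sound and in fact clarifies a point the paper leaves implicit. The difficulties lie elsewhere.

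\textbf{Base case.} You frame this as computing the image of the full symmetrizer $T(e)=\sum_w M_w$, which is indeed delicate. The paper avoids this entirely: it checks that the unique embedding $\iota\colon \Z([1,nr])\hra \Pi_n$ satisfies $T(s_i)\circ\iota=\iota$ for every simple reflection $s_i$. By the normalization fixed after \Cref{L:directsum}, $1\times\cdots\times M_{\Z(\De),\Z(\De)}\times\cdots\times 1$ acts as the identity on the subspace $\Z(\De)^{\times(i-1)}\times\Z([1,2r])\times\Z(\De)^{\times(n-i-1)}$, and $\iota$ factors through that subspace by \Cref{L:rese} and Frobenius reciprocity. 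One generator at a time; no global symmetrizer needed. (Incidentally, for the trivial module the generator is $\sum_w w$ with no signs.)

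\textbf{Inductive step.} There are two genuine gaps.
First, to realize $\Z(\lambda)$ inside $\Z([1,\lambda_1 r])\times \Z(\fm(\lambda'))$ via the induction compatibility of \Cref{P:movimen}, you need an inclusion $D_\lambda\hra \id_{(\lambda_1,n-\lambda_1)}(D_{(\lambda_1)}\otimes D_{\lambda'})$ inside $\fl[S_n]$. This is a nontrivial modular branching statement; it is not among the facts recorded in \Cref{S:symmetric} and does not follow from \Cref{P:movimen}. Second, even granting that inclusion, \Cref{T:msZ}(4) only says $\Z(\fm(\lambda))$ occurs once as a subquotient of $\Z([1,\lambda_1 r])\times \Z(\fm(\lambda'))$; it does not identify the socle, so you cannot conclude that the irreducible subrepresentation you have found is $\Z(\fm(\lambda))$.

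\textbf{How the paper closes the induction.} Rather than inducting on $n$, the paper inducts along the dominance order and uses only the standard chain $D_\lambda\twoheadleftarrow Sp_\lambda\hra M_\lambda$. Via \Cref{P:movimen} and the base case, $\im_L(M_\lambda)=\Z([1,\lambda_1 r])\times\cdots\times\Z([1,\lambda_k r])$, and $\Z(\lambda)$ is shown to occur there as a subquotient (through $\im_R(\widetilde{Sp_\lambda})$ and duality). Every irreducible constituent of that product is some $\Z(\fm(\mu))$ with $\mu$ comparable to $\lambda$; if $\mu\neq\lambda$ then by the inductive hypothesis $\Z(\mu)=\Z(\fm(\mu))$, so $\Z(\lambda)\cong\Z(\mu)$ would force $D_\lambda\cong D_\mu$ (since, as you observed, the isomorphism class of $\ho_L(\pi,i)\cong\ho(\Pi_n,\pi)$ depends only on $\pi$), a contradiction. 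This sidesteps both of your gaps: no branching for $D_\lambda$ is needed, and no socle computation is required.
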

\begin{proof}
    We start with $\lambda=(n)$. We start by showing that the unique inclusion $\Z(\fm(n))\hra \Pi_n$ is preserved after composing with $1_{\Z(\De)}\times\ldots\times M_{\Z(\De),\Z(\De)}\times \ldots \times 1_{Z(\De)}$. By the normalization we chose, \[1_{\Z(\De)}\times\ldots\times M_{Z(\De),\Z(\De)}\times \ldots \times 1_{Z(\De)}\] acts trivially on $\Z(\De)\times \ldots\times \Z([1,2d]_\rho)\times \ldots\times \Z(\De)$ and $\Z([1,nd])$ lies in this subspace by \Cref{L:rese} and Frobenius reciprocity.
    Now by definition $\hom_L(\Z(\fm(n))\hra \Pi_n)$ is thus by \Cref{P:movimen} associated to the object of $\md_L^n$ invariant under all elementary transpositions and hence has to correspond to the trivial representation with its unique inclusion. 
    By \Cref{P:movimen} the claim then follows for $\lambda=(n)$.
    We now assume we have proven the claim for all $\mu\le \lambda$. It suffices now to show that $\Z(\lambda)$ is a subquotient of $\im_L(M_\lambda)$. It would follow that $\Z(\lambda)=\Z(\fm(\mu))$ for $\mu\le \lambda$. If $\mu\neq \lambda$, it would follow that $D_\mu\cong D_\lambda$, a contradiction. Since $\im_L(Sp_\lambda)$ is contained in $\im_L(M_\lambda)$ it suffices to show that $\Z(\lambda)$ appears in $\im_L(Sp_\lambda)$. However, this follows again by \Cref{P:movimen}, since $D_\lambda$ is a quotient of $Sp_\lambda$ and hence we have
    \[\Z(\lambda)^\lor\hra\im_L(Sp_\lambda)^\lor=\im_R(Sp_\lambda^*).\] Thus the claim follows.
\end{proof}
\begin{rem}
    The above can be used to give a new proof that if $o(\rho)=1$, $\Z(\ell\cdot [0,0]_\rho)$ is cuspidal by using that there exists no irreducible representation of $S_\ell$ parametrized $(1,\ldots,1)$. Indeed, if it were not cuspidal, we could embed it into $\Pi_n$ and hence it corresponds under $\mathrm{Hom}_L$ to an element in $\md_L^n$, whose underlying representation is irreducible. But then we can apply \Cref{L:classmatch} to finish the claim.
\end{rem}
Next we consider the case of restriction respectively applying the Jacquet functor.
\begin{lemma}\label{L:rest}
    Let $\ain{k}{1}{n}$. Then \[\rs_{k,n-k}(\Pi_n)\cong \binom{n}{k}\cdot \Pi_k\otimes \Pi_{n-k}.\]
    Moreover, this isomorphism can be chosen such that the image of $T(w)$ in
    \[\ho(\rs_{k,n-k}(Z(\De)^n),\Pi_k\otimes\Pi_{n-k})\cong \restr{\fl[S_n]}{\fl[S_k]\times \fl[S_{n-k}]}\]
     under adjointness corresponds to the element $w$.
\end{lemma}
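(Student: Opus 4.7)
The plan is to apply the Geometric Lemma (\cite{BerZel77} Lemma 2.11) to the composition $r_{kr,(n-k)r}\circ \id_{P_{r,\ldots,r}}^{G_{nr}}$, and then to extract $\rs_{k,n-k}(\Pi_n)$ as the summand whose irreducible subquotients match those of $\Pi_k\otimes\Pi_{n-k}$. The Geometric Lemma equips $r_{kr,(n-k)r}(\Pi_n)$ with a filtration indexed by $2\times n$ matrices $M=(m_{ij})$ with row sums $(kr,(n-k)r)$ and every column summing to $r$. For such a matrix, \Cref{L:rese} shows that column $j$ contributes the Jacquet piece $\Z([1,u_j]_\rho)\otimes\Z([u_j+1,r]_\rho)$ with $u_j=m_{1j}/\deg(\rho)$, and the overall subquotient is obtained by parabolic induction of these factors back up to $G_{kr}\times G_{(n-k)r}$.

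The key observation is that only matrices whose columns all lie in $\{(r,0),(0,r)\}$ produce contributions to $\rs_{k,n-k}$. Indeed, if $0<u_j<r$ for some $j$, the segment $[1,u_j]_\rho$ has $e_\rho$-value $u_j\not\equiv 0\bmod r$, so by \Cref{L:end} the resulting subquotient of the first Levi component cannot appear among the subquotients of $\Pi_k=\Z(\De)^{\times k}$, whose irreducible subquotients are all parametrized (again by \Cref{L:end}) by multisegments of segments with $b_\rho=1,\,e_\rho=0$. The ``good'' matrices are in bijection with $k$-subsets of $\{1,\ldots,n\}$, yielding $\binom{n}{k}$ filtration pieces, each naturally isomorphic to $\Pi_k\otimes\Pi_{n-k}$ (no Jacquet is performed on the $\Z(\De)$'s and the modulus-character correction is trivial). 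Splitting is then forced by dimensions: by Frobenius reciprocity and \Cref{C:important},
\[\dim_\fl\ho(\rs_{k,n-k}(\Pi_n),\Pi_k\otimes\Pi_{n-k})=\dim_\fl\ed(\Pi_n)=n!,\]
while a filtration of length $\binom{n}{k}$ with graded pieces $\Pi_k\otimes\Pi_{n-k}$ (and $\ed(\Pi_k\otimes\Pi_{n-k})\cong\fl[S_{k,n-k}]$ of dimension $k!(n-k)!$, again by \Cref{C:important}) can contribute a Hom-space of dimension at most $\binom{n}{k}\cdot k!(n-k)!=n!$, with equality forcing the filtration to split.

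For the second assertion, the indexing of the $\binom{n}{k}$ summands of $\rs_{k,n-k}(\Pi_n)$ by ``good'' matrices corresponds, via the bijection with $k$-subsets, to the minimal-length $S_{k,n-k}$-coset representatives $v\in S_n$. Writing $w=vu$ with $u\in S_{k,n-k}$, property (8) of \Cref{T:PInt} yields the factorization $T(w)=T(v)\circ T(u)$. Property (9) of \Cref{T:PInt} shows that $T(u)$ is the parabolic induction of length-one intertwiners internal to the Levi $G_{kr}\times G_{(n-k)r}$, so under Frobenius it acts on each summand $\Pi_k\otimes\Pi_{n-k}$ exactly as $u$ via the identification $\ed(\Pi_k\otimes\Pi_{n-k})\cong\fl[S_k]\otimes\fl[S_{n-k}]\cong\fl[S_{k,n-k}]$ of \Cref{C:important}. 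Meanwhile $T(v)$ mixes the two Levi blocks: tracking the supports of intertwining operators through the Geometric Lemma filtration shows that the Frobenius image of $T(v)$ is supported on the $v$-th summand of $\rs_{k,n-k}(\Pi_n)$ and restricts there to an isomorphism onto the target $\Pi_k\otimes\Pi_{n-k}$, while vanishing on the other summands. Composing, $T(w)=T(v)\circ T(u)$ corresponds under the adjunction to the element $w$ in the $S_{k,n-k}$-coset decomposition $\fl[S_n]\cong\binom{n}{k}\cdot\fl[S_{k,n-k}]$. The main technical obstacle is aligning the indexing of the Geometric Lemma summands with the chosen coset representatives, which is essentially a careful bookkeeping through the intertwining-operator normalizations of \Cref{T:PInt}(8)--(9).
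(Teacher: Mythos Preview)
Your argument is correct and runs along the same lines as the paper's: both use the Geometric Lemma to see that $\rs_{k,n-k}(\Pi_n)$ is an extension of $\binom{n}{k}$ copies of $\Pi_k\otimes\Pi_{n-k}$, and both use the intertwiners $T(v)$ for minimal $S_{k,n-k}$-coset representatives $v$ to handle the second assertion. The one genuine difference is in how the filtration is split. You split abstractly by a dimension count---Frobenius reciprocity forces $\dim\ho(\rs_{k,n-k}(\Pi_n),\Pi_k\otimes\Pi_{n-k})=n!$, which saturates the upper bound $\binom{n}{k}\cdot k!(n-k)!$ and hence forces every short exact sequence in the filtration to split---and only afterwards try to align the resulting summands with the $T(v)$'s. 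The paper instead \emph{defines} the splitting by the $\binom{n}{k}$ surjections coming from the $T(v)$'s themselves (these are linearly independent by \Cref{T:PInt}(2), hence jointly give an isomorphism onto $\binom{n}{k}\cdot\Pi_k\otimes\Pi_{n-k}$ by a length count).

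The payoff of the paper's ordering is that the second assertion becomes tautological: once the isomorphism is \emph{defined} so that $T(v)$ is the $v$-th projection, the factorization $T(w)=T(v)T(u)$ with $u\in S_{k,n-k}$ immediately gives the claimed identification. Your phrasing that ``the Frobenius image of $T(v)$ is supported on the $v$-th summand \ldots\ while vanishing on the other summands'' is slightly imprecise as stated, since until a splitting is fixed there are no ``other summands'' on which to vanish; what is actually true (and what you need) is that the Frobenius image of $T(v)$ does not vanish on the $v$-th \emph{filtration step}, which is exactly the content of regularity in \Cref{T:PInt}(2). Rephrasing your final paragraph to take the $T(v)$'s as the definition of the splitting removes this ambiguity and brings your argument in line with the paper's.
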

\begin{proof}
    The Geometric Lemma shows that $\rs_{k,n-k}(\Pi_n)$ has length $\binom{n}{k}$ and all subquotients are isomorphic to the $\Pi_k\otimes \Pi_{n-k}$. It thus suffices to choose $\binom{n}{k}$ linearly independent morphisms
    \[\rs_{k,n-k}(\Pi_n)\sra \Pi_k\otimes \Pi_{n-k}.\]
    However these are given precisely by $T(w)$ where $w$ is a minimal element in one of the $\binom{n}{k}$ $S_{k,n-k}$-cosets in $S_n$, see \Cref{S:symmetric}. These choices also assure that our choices line up in the required sense.
\end{proof}
As a corollary we can prove as \Cref{P:movimen} the following.
\begin{corollary}\label{C:rescom}
    Let $(\pi,p)\in \md_{R}(\Pi_n)$ and $\ain{k}{1}{n}$.
    There exist functorial bijections between the sets
    \[\{(\tau,p'):\,p'\colon \rs_{(k,n-k)}(\pi)\sra \tau, \text{ there exists }i\colon \tau\hra \Pi_k\otimes \Pi_{n-k}\}\]
    and
    \[\{(M,\xi):\,M\hra \restr{\ho_R(\pi,p)}{\fl[S_k]\times \fl[S_{n-k}]},\, \xi\text{ a generator of }M\}\]
    given by $(\tau,p')\mapsto (p'\circ p)(\ho(\tau,\Pi_k\otimes \Pi_{n-k}))$
    and 
    \[(M,\xi)\mapsto \im(f).\]
\end{corollary}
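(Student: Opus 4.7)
The plan is to mimic the proof of \Cref{P:movimen}, replacing the bijection $\md_*^n \leftrightarrow \md_*(\Pi_n)$ with its relative version after applying the Jacquet functor $r_{k,n-k}$. The key input is \Cref{L:rest}, which identifies $\rs_{k,n-k}(\Pi_n) \cong \binom{n}{k} \cdot \Pi_k \otimes \Pi_{n-k}$ in such a way that the space of morphisms from it to $\Pi_k\otimes\Pi_{n-k}$ is naturally identified, via Frobenius reciprocity, with $\restr{\fl[S_n]}{\fl[S_k]\times\fl[S_{n-k}]}$; under this identification the element $T(w)\in \ed(\Pi_n)$ restricts to $w$. I would further note that, since $\rs_{k,n-k}(\Pi_n)$ is semi-simple isotypic by \Cref{L:rest}, the functor $r_{k,n-k}$ is exact on the short exact sequence defining $\pi$ and preserves the relevant Hom-dimensions, so one obtains an analogue of \Cref{L:quot} in this setting.

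For the forward direction, given $(\tau,p')$ with $p'\colon \rs_{k,n-k}(\pi)\sra \tau$ and a chosen embedding $i\colon \tau \hra \Pi_k\otimes\Pi_{n-k}$, I would precompose with $\rs_{k,n-k}(p)\colon \rs_{k,n-k}(\Pi_n)\sra \rs_{k,n-k}(\pi)$ to obtain an inclusion
\[\ho(\tau,\Pi_k\otimes\Pi_{n-k}) \hra \ho(\rs_{k,n-k}(\Pi_n),\Pi_k\otimes\Pi_{n-k}) \cong \restr{\fl[S_n]}{\fl[S_k]\times\fl[S_{n-k}]}.\]
Using the analogue of \Cref{L:quot} applied to $p'\circ \rs_{k,n-k}(p)$, the image $M\coloneq (p'\circ \rs_{k,n-k}(p))^*\ho(\tau,\Pi_k\otimes\Pi_{n-k})$ is a submodule of $\restr{\ho_R(\pi,p)}{\fl[S_k]\times\fl[S_{n-k}]}$. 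The distinguished element $\xi$ corresponding to $i\in \ho(\tau,\Pi_k\otimes\Pi_{n-k})$ is a generator of $M$ since $i$ is an embedding and $\tau$ surjects onto its image in $\Pi_k\otimes \Pi_{n-k}$.

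In the reverse direction, a pair $(M,\xi)$ with $\xi$ a generator of $M\hra \restr{\ho_R(\pi,p)}{\fl[S_k]\times\fl[S_{n-k}]}$ gives, under the identification of \Cref{L:rest}, a morphism $f\colon \rs_{k,n-k}(\Pi_n)\ra \Pi_k\otimes\Pi_{n-k}$ that factors through $\rs_{k,n-k}(\pi)$ via $\rs_{k,n-k}(p)$, since $\xi\in \ho_R(\pi,p)$. Let $\tau\coloneq \im(f)$, viewed as a quotient of $\rs_{k,n-k}(\pi)$ via the induced surjection $p'$, equipped with the tautological inclusion $\tau\hra\Pi_k\otimes\Pi_{n-k}$. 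That these two assignments are mutually inverse follows exactly as in \Cref{P:movimen}: the equality $\im_L\circ\ho_L=\id$ there becomes the statement that $\im(f)$ together with its tautological embedding recovers $(\tau,p')$, and $\ho_L\circ\im_L=\id$ becomes the statement that the set of morphisms from $\im(f)$ to $\Pi_k\otimes \Pi_{n-k}$ pulled back through $p'\circ \rs_{k,n-k}(p)$ recovers $M$, which again uses the exactness supplied by the semi-simplicity of $\rs_{1,\ldots,1}(\Pi_n)$.

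The main obstacle I expect is bookkeeping rather than conceptual: one must keep straight the distinction between a module $M$ and its fixed generator $\xi$ (necessary on the algebraic side because different generators produce different embeddings $\tau\hra \Pi_k\otimes\Pi_{n-k}$), and verify that the bijections are compatible with morphisms in both categories, i.e.\ that they are genuinely functorial. Once the translation via \Cref{L:rest} is made precise, functoriality is immediate from the functoriality of $r_{k,n-k}$ and of Hom, but writing down the correspondence on morphisms carefully is the most delicate part of the argument.
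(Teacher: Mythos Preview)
Your proposal is correct and follows essentially the same approach as the paper: both reduce the bijection to the identification of $\ho(\rs_{k,n-k}(\pi),\Pi_k\otimes\Pi_{n-k})$ with $\restr{\ho_R(\pi,p)}{\fl[S_k]\times\fl[S_{n-k}]}$ via \Cref{L:rest}, and then argue exactly as in \Cref{P:movimen}. The paper's proof is in fact a one-line sketch stating only that this identification is the sole non-trivial point and that it follows from the choices made in \Cref{L:rest}; you have simply unpacked this in detail, including the analogue of \Cref{L:quot} and the verification that the two assignments are mutually inverse.
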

    Again we understand $\im(f)$ to come equipped with the natural surjection or inclusion.
\begin{proof}
    The only non-trivial part is to see that
    $\ho(\rs_{k,n-k}(\pi),\Pi_k\otimes \Pi_{n-k})$ matches up with
    $\restr{\ho_R(\pi,p)}{\fl[S_k]\times \fl[S_{n-k}]}$.
    But this follows immediately the choices made in \Cref{L:rest}.
\end{proof}
Let $\lambda$ be an $\ell$-restricted partition of $m\le n$ and $\pi$ an irreducible subrepresentation of $\Pi_n$. Then we denote by $\rst_\lambda^l(\pi)$ the maximal representation $\Pi$ such that $\rs_{n-m,m}(\pi)\sra \Pi\otimes \Z(\lambda)$ and $\rst_\lambda^r(\pi)$ the maximal representation $\Pi$ such that $\rs_{m,n-m}(\pi)\sra \Z(\lambda)\otimes \Pi$.
\begin{lemma}\label{L:restr}
Let $p,n,m\in \NN$ with $p\le n,m$ and $\pi,\sigma$ two subrepresentations of $\Pi_n$ and $\Pi_m$. Let moreover $\lambda$ be an $\ell$-restricted partition of $n-p$, $\lambda'$ an $\ell$-restricted partition of $m-p$, and $f\colon \rst_\lambda^l(\pi)^\lor\ra\rst_{\lambda'}^r(\sigma)$ a morphism. Then the image of $f$ is of the form $\Pi_1\oplus\ldots\oplus \Pi_k$ with $\Pi_{p}\sra\Pi_i\hra \Pi_{p}$ for all $\ain{i}{1}{k}$.
\end{lemma}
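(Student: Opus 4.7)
The plan is to translate the statement into the algebraic side using the equivalences of \Cref{P:movimen} and the extension properties of \Cref{L:quot}, after enlarging the picture to accommodate direct sums of $\Pi_p$.

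First, using \Cref{L:rest} together with the exactness of the Jacquet functor applied to the inclusions $\pi\hra\Pi_n$ and $\sigma\hra\Pi_m$, one checks that $\rst_\lambda^l(\pi)^\lor$ is a subrepresentation of a finite direct sum $A$ of copies of $\Pi_p$, and dually that $\rst_{\lambda'}^r(\sigma)$ is a quotient of some $B$ that is again a finite direct sum of copies of $\Pi_p$. Concretely, the decomposition $\rs_{p,n-p}(\Pi_n)\cong\binom{n}{p}\cdot\Pi_p\otimes\Pi_{n-p}$ allows one to pull back the $\Z(\lambda)$-isotypic piece on the right factor through $\pi\hra\Pi_n$ and obtain the desired embedding for $\rst_\lambda^l(\pi)^\lor$; the argument for $\rst_{\lambda'}^r(\sigma)$ is dual.

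Second, I would lift $f$ to a morphism $\tilde f\colon A\to B$ making the obvious square with $f$ commute, using a direct-sum version of \Cref{L:quot}. The two short exact sequences in that corollary express that $\ho(\Pi_p,-)$ and $\ho(-,\Pi_p)$ are exact on subrepresentations and quotients of $\Pi_p$, and this property passes to direct sums block-wise; so given the inclusion $\rst_\lambda^l(\pi)^\lor\hra A$ and the surjection $B\sra\rst_{\lambda'}^r(\sigma)$, one constructs $\tilde f$ by lifting coordinate by coordinate.

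Third, the image $\im(\tilde f)\subseteq B$ is simultaneously a subrepresentation of $B$ and a quotient of $A$. Projecting onto each individual $\Pi_p$-summand of $B$ decomposes $\im(\tilde f)$ as $\Pi_1\oplus\ldots\oplus\Pi_k$, where each $\Pi_i$ is a subrepresentation of a single copy of $\Pi_p$; invoking the self-duality of subquotients of $\Pi_p$ recalled before \Cref{P:movimen} together with \Cref{P:movimen} itself, each $\Pi_i$ is also a quotient of $\Pi_p$, giving $\Pi_p\sra\Pi_i\hra\Pi_p$. The image of $f$ is then recovered as the image of $\im(\tilde f)$ under the surjection $B\sra\rst_{\lambda'}^r(\sigma)$. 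The main obstacle will be guaranteeing that this last projection preserves the direct-sum decomposition rather than collapsing or merging factors; I expect this will require refining the lift $\tilde f$ by a minimality argument on the number of summands $a$ and $b$, together with the block decomposition of $\ed(\Pi_p)\cong\fl[S_p]$ transported to subquotients of direct sums of $\Pi_p$ via \Cref{P:movimen}.
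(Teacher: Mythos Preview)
Your approach is the paper's in outline --- both use \Cref{L:rest} to sandwich everything between direct sums of $\Pi_p$ --- but you have introduced an error and an unnecessary complication.

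The error is in your step 3: projecting $\im(\tilde f)\subseteq B=\bigoplus\Pi_p$ onto the individual $\Pi_p$-summands of $B$ does \emph{not} produce a direct-sum decomposition of $\im(\tilde f)$. Already the diagonal copy of $\Pi_p$ inside $\Pi_p\oplus\Pi_p$ projects isomorphically onto each factor while being a single indecomposable. The paper never argues via coordinate projections; it just asserts that the image of a map $N\cdot\Pi_p\to N'\cdot\Pi_p$ is ``of the desired form''.

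The complication is the lift $\tilde f$. With your orientation (source $\hra A$ and $B\sra$ target), even granting that $\tilde f$ exists, $\im(f)$ is only the projection to the target of $\tilde f(\text{source})\subseteq\im(\tilde f)$, and there is no reason this should inherit any structure from $\im(\tilde f)$; the minimality argument you propose does not repair this. The paper's two-line proof avoids the lift entirely: it records the surjections $N\cdot\Pi_p\sra\rst_\lambda^l(\pi)$ and $N'\cdot\Pi_p\sra\rst_{\lambda'}^r(\sigma)$ coming from \Cref{L:rest} and concludes directly that $\im(f)$ equals the image of a morphism $N\cdot\Pi_p\to N'\cdot\Pi_p$. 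A clean way to justify that last step (and simultaneously the ``desired form'' assertion) is to observe that $\im(f)$ is a subquotient of a direct sum of copies of $\Pi_p$, and that in this full subcategory $\Pi_p$ is both projective and injective --- this is exactly what \Cref{L:quot} encodes. Hence $\im(f)$ is automatically a quotient of some $M\cdot\Pi_p$ and a subobject of some $M'\cdot\Pi_p$, so it is literally the image of the composite $M\cdot\Pi_p\sra\im(f)\hra M'\cdot\Pi_p$; no comparison with a lifted $\tilde f$ is required.
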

\begin{proof}
    From \Cref{L:rest} it follows that there exist suitable $N\in \NN$ and a surjective map
    \[N\cdot \Pi_{p}\sra \rst_\lambda^l(\pi)\] and similarly for $\rst_{\lambda'}^r(\sigma)$. But then the image of $f$ is the image of a morphism \[N\cdot \Pi_{p}\ra N'\cdot \Pi_{p},\] which is of the desired form.
\end{proof}
Let $\pi_1,\pi_2,\in \irr_n$. Then there is a natural identification of $\ho(\pi_1,\pi_2^\lor)$ with $\ho(S(G_n),\pi_1^\lor\otimes\pi_2^\lor)$ sending a morphism $f$ to the morphism sending a compactly supported function $\phi$ to the functional acting on $v\otimes w$ via
\[\int_{G_n}\phi(g)f(\pi_1(g)v)(w)\, \mathrm{d}g.\]
This identification depends of course on a choice of Haar-measure.
We denote by $\Sigma_n(\pi_1)\subseteq \pi_1^\lor\otimes\pi_1$ the image corresponding to the identity.
\begin{theorem}\label{T:sec4}
    Let $p\le m, n\in \NN$ and let $\Z(\mu),\Z(\mu')$ be two subrepresentations of $\Pi_n$ and $\Pi_m$. Then
    \[\dim_\fl\ho(\id_{P_{d(p,n-p)}\times P_{d(m-p,p)}}(\fo_{d(n-p)}\otimes \fo_{d(m-p)}\otimes S(G_{pd})),\Z(\mu)\otimes \Z(\mu'))=\]\[=\dim_\fl\ho(\id_{P_{d(p,n-p)}\times P_{d(m-p,p)}}(\fo_{d(n-p)}\otimes \fo_{d(m-p)})\otimes \Sigma_{pd}(\Pi_p),\Z(\mu)\otimes \Z(\mu'))=\]\[=\dim_\fl\ho(\fl[\pp_{n,m}^r],D_\mu\otimeslr D_{\mu'}).\]
\end{theorem}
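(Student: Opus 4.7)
I plan to prove the two equalities separately. Both use Bernstein reciprocity to convert the Homs out of the parabolic inductions into Homs into the Jacquet module $\overline{r}(\Z(\mu) \otimes \Z(\mu'))$, after which the Geometric Lemma, \Cref{L:rest}, \Cref{L:restr}, and \Cref{C:rescom} combine to transfer the representation-theoretic computation to a symmetric-group computation, using the algebra isomorphism $T\colon \fl[S_p] \cong \ed(\Pi_p)$ from \Cref{C:important}.

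\textbf{Second equality.} Applying Bernstein reciprocity to the middle term gives a Hom into $\overline{r}_{p,n-p}(\Z(\mu)) \otimes \overline{r}_{m-p,p}(\Z(\mu'))$. Projecting onto the $\fo$-isotypic summand on the inner $G_{r(n-p)} \times G_{r(m-p)}$-factors cuts out a specific part of the Jacquet module; by \Cref{L:rest} and \Cref{L:restr}, the outer $G_{pr} \times G_{pr}$-factors of this part sit inside sums of copies of $\Pi_p \otimes \Pi_p$. Applying \Cref{C:rescom} then translates the Hom out of $\Sigma_{pr}(\Pi_p)$ into a Hom of $\fl[S_p]$-bimodules built from the restrictions of $\ho_R(\Z(\mu)) \leftrightarrow D_\mu$ and $\ho_R(\Z(\mu')) \leftrightarrow D_{\mu'}$ under the equivalence \Cref{P:movimen}. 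On the symmetric-group side, the realization $\fl[\pp_{n,m}^p] \cong \id_{(p,n-p),(m-p,p)}(\fo_{n-p} \otimes \fl[S_p] \otimes \fo_{m-p})$ recalled in \Cref{S:symmetric}, together with Frobenius reciprocity, produces the identical expression once the middle $\fo$-isotypic projections are performed.

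\textbf{First equality.} The canonical surjection $S(G_{pr}) \sra \Sigma_{pr}(\Pi_p)$ coming from the identity element of $\ho(\Pi_p, \Pi_p)$ induces an injection on Hom spaces; I would show it is bijective. After Bernstein reciprocity, only $\Pi_p \otimes \Pi_p$-type subquotients on the outer Levi factors of the Jacquet module can contribute (by cuspidal-support and block considerations combined with \Cref{L:restr}). On each such component, the identification $\ho(S(G_{pr}), \Pi_p \otimes \Pi_p) \cong \ho(\Pi_p, \Pi_p) = \ed(\Pi_p) \cong \fl[S_p]$ has dimension $p!$, and the same dimension is attained via $\ho(\Sigma_{pr}(\Pi_p), \Pi_p \otimes \Pi_p)$ by post-composing the inclusion with $T(w)$ for $w \in S_p$. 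Equality of dimensions then forces the injection to be an isomorphism.

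\textbf{Main obstacle.} The delicate step will be the Geometric Lemma bookkeeping. One must isolate from the filtration of $\overline{r}_{p,n-p}(\Z(\mu)) \otimes \overline{r}_{m-p,p}(\Z(\mu'))$ the specific subquotient whose inner factors are $\fo$-isotypic and whose outer factors are of $\Pi_p \otimes \Pi_p$-type, and then verify via \Cref{C:rescom} that the resulting $\fl[S_p]$-bimodule structure matches the one obtained by restricting $D_\mu \otimes D_{\mu'}$ to $\fl[S_p \times S_{n-p} \times S_{m-p} \times S_p]$ and projecting to $\fo$-isotypic on the middle factors. Tracking throughout which $\fl[S_p]$-action comes from which factor is the principal bookkeeping burden, and is where the combination of \Cref{L:rest} (normalising the identification $\rs_{p,n-p}(\Pi_n) \cong \binom{n}{p} \Pi_p \otimes \Pi_{n-p}$ compatibly with $T$) and \Cref{C:rescom} is essential.
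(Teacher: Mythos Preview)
Your approach is essentially the one the paper takes: second adjointness, the identification $\ho(S(G_{pr}),\pi\otimes\pi')\cong\ho(\pi^\lor,\pi')$, then \Cref{L:restr} and \Cref{C:rescom}. One organisational difference is worth noting. For the first equality you argue via an injection plus a dimension count on $\Pi_p\otimes\Pi_p$-components; the paper instead observes that after second adjointness and the $S(G_{pr})$-identification the first Hom is literally $\ho(\rst_{(n-p)}^l(\Z(\mu))^\lor,\rst_{(m-p)}^r(\Z(\mu')))$, and then \Cref{L:restr} says the image of any such map is a direct sum of pieces $\Pi_i$ with $\Pi_p\sra\Pi_i\hra\Pi_p$, so every map already factors through $\Sigma_{pr}(\Pi_p)$. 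This bypasses your dimension comparison entirely and avoids the Geometric Lemma bookkeeping you flag as the main obstacle: no filtration needs to be analysed, because the functor $\rst$ and the definition of $\rs$ have already isolated the relevant summand.
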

\begin{proof}
    We apply on the right side the second adjointness, the above observation, to obtain that it is equal to
    \[\dim_\fl\ho(\rst_\lambda^l(\Z(\mu)),\rst_{\lambda'}^r(\Z(\mu'))).\] The first equality follows now by \Cref{L:restr}. But then we apply \Cref{C:rescom} to obtain the desired second equality.
\end{proof}
\section{Theta correspondence in type II}\label{S:finalbig}
In this section we study several aspects of the theta correspondence.
\subsection{Geometry of $\mt_{n,m}$}
Let $n,m\in \NN$ and consider the space $\mt_{n,m}$ of $n\times m$ matrices over $\Ff$ on which $G_n\times G_m$ acts by conjugation. For simplicity we will assume $n\le m$. Define for $\ain{r}{0}{n}$ the $G_n\times G_m$ orbits $\mt_{n,m}^r\coloneq \{M\in\mt_{n,m}:\rk(M)=r\}$ and their closures
\[\mt_{n,m}^{\le r}\coloneq \{M\in\mt_{n,m}:\rk(M)\le r\}\] as well as their complement
\[\mt_{n,m}^{\ge r+1}\coloneq \{M\in\mt_{n,m}:\rk(M)\ge r+1\}.\]
We thus obtain the well-known filtration of $S(\mt_{n,m})$ by the rank
\[0\subseteq S(\mt_{n,m}^{\ge n})\subseteq S(\mt_{n,m}^{\ge n-1})\subseteq \ldots\subseteq S(\mt_{n,m}^{\ge 0})=S(\mt_{n,m}),\]
with quotients
\[0\ra S(\mt_{n,m}^{\ge r+1})\ra S(\mt_{n,m}^{\ge r})\ra S(\mt_{n,m}^{r})\ra 0.\]
Next we fix the element \[\eta_r\coloneq \begin{pmatrix}
    1_r&0\\0&0
\end{pmatrix}\]
with stabilizer the inverse image of $\Delta G_r$ under the natural surjection $P_{r,n-r}\times \ol{P_{r,m-r}}\sra G_r\times G_r$.
One thus obtains the well-known isomorphism
\[S(\mt_{n,m}^{\ge r})\cong \sigma_{n,m}^r\coloneq \id_{P_{r,n-r}\times  \overline{P_{r,m-r}}}^{G_n\times G_m}(\xi_{r,n,m}\otimes S(G_r)),\]
where \[\xi_{r,n,m}\coloneq \abs{r}{r-n}\otimes \abs{n-r}{r}\otimes\abs{r}{m-r}\otimes \abs{m-r}{-r}\] is a character on $G_r\times G_{n-r}\times G_{r}\times G_{m-r}$.

Let $\ain{t}{0}{n}$ and consider the open $P_{n-t,t}\times \overline{P_{m-t,t}}$-subset of $\mt_{n,m}$ given by
\[\mt_{n,m,t}\coloneq \left\{\Xi(u,v,m,g):g\in G_t, u\in \mt_{m-t,t},v\in\mt_{t,n-t},m\in\mt_{n-t,m-t}\right\},\]
where \[\Xi(u,v,m,g)\coloneq \begin{pmatrix}
    1&0\\v&1
\end{pmatrix}\begin{pmatrix}
    m&0\\0&g
\end{pmatrix}\begin{pmatrix}
    1&u\\0&1
\end{pmatrix}.\]
    We have 
    \[r_{P_{n-t,t}\times \overline{P_{m-t,t}}}(S(\mt_{n,m,t}))\cong (\abs{n-t}{t}\otimes \abs{n-t}{-t})S(M_{n-t,m-t})\otimes (\abs{t}{t-n}\otimes \abs{t}{m-t})S(G_t)\]
via the map sending the equivalence class of a map $f$ on the left side to 
\[(m,g)\mapsto \int_{\mt_{m-t,t}\times \mt_{t,n-t}}f(\Xi(u,v,m,g))\, \mathrm{d}u \mathrm{d}v\]
for any nontrivial choice of Haar measures.
The following was proved in the complex case, however the exact same arguments work in the modular case.
\begin{theorem}[{\cite[§5.1,5.3]{Dro25II}}]\label{T:reduc3}
    Let $\pi_1,\pi_2,\tau_1,\tau_2\in\irr$ such that $\pi_2$ is $\as^{\frac{n-1}{2}}$-reduced and $\tau_2$ is $\as^{\frac{m-1}{2}}$-reduced.

    If there exists a non-zero morphism
    \[S(\mt_{n,m})\ra \pi_1\times \pi_2\otimes \tau_1^\lor\oltimes\tau_2^\lor,\] then $\deg(\pi_2)=\deg(\tau_2)=t$, $\as^{\frac{m-n}{2}}\pi_2\cong \tau_2$ and the map 
    \[r_{P_{n-t,t}\times \overline{P_{m-t,t}}}(S(\mt_{n,m,t}))\hra  r_{P_{n-t,t}\times \overline{P_{m-t,t}}}(S(\mt_{n,m}))\ra \pi_1\otimes\pi_2\otimes \tau_1^\lor\otimes\tau_2^\lor\] obtained by Frobenius reciprocity is nonzero.
\end{theorem}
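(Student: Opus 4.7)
The strategy is to pass the morphism to a Jacquet module via Frobenius reciprocity, then analyze its source via the rank filtration of $S(\mt_{n,m})$ and the Geometric Lemma, using the reducedness hypothesis to eliminate all contributions except the desired one.

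Set $s=\deg(\pi_2)$ and $s'=\deg(\tau_2)$. Applying Frobenius reciprocity on both factors of the target ($r_\alpha$ is left adjoint to $\id_{P_\alpha}$, and symmetrically $\ol{r_\alpha}$ is left adjoint to $\id_{\ol{P_\alpha}}$), the hypothesized morphism corresponds to a non-zero morphism
\[\Phi\colon r_{P_{n-s,s}\times \ol{P_{m-s',s'}}}(S(\mt_{n,m}))\to \pi_1\otimes\pi_2\otimes\tau_1^\lor\otimes\tau_2^\lor.\]
Since the Jacquet functor is exact, the rank filtration $0\subseteq S(\mt_{n,m}^{\ge n})\subseteq\ldots\subseteq S(\mt_{n,m})$ refines to one on the source of $\Phi$, and $\Phi$ must be non-zero on at least one graded piece $r_{P_{n-s,s}\times \ol{P_{m-s',s'}}}(S(\mt_{n,m}^r))$.

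For each $r$ I would apply the Geometric Lemma to $\sigma_{n,m}^r = \id_{P_{r,n-r}\times \ol{P_{r,m-r}}}(\xi_{r,n,m}\otimes S(G_r))$. Its subquotients are indexed by a finite set of double cosets, and each is of the form $\id_{(\cdots)}\bigl((\text{character twist})\otimes(\text{Jacquet piece of }S(G_r))\bigr)$, where the twist is built explicitly from $\xi_{r,n,m}$, the normalizing modulus characters, and the pullback $\ad(w)$ for a double-coset representative $w$. The reducedness hypothesis now does the work: a careful comparison of central characters on the $\pi_2$-factor shows that every non-generic pair $(r,w)$ introduces a factor of $\as^{(n-1)/2}$ on some initial subblock, so that a non-zero contribution to $\pi_2$ would yield, via Frobenius reciprocity, a non-zero morphism $\as^{(n-1)/2}\otimes \pi_2'\to \pi_2$ for some $\pi_2'$, contradicting $\as^{(n-1)/2}$-reducedness. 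The symmetric argument on the $G_m$-side using $\as^{(m-1)/2}$-reducedness of $\tau_2$ rules out the remaining non-generic pieces.

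Thus only the piece with $r = s = s' = t$ coming from the open double coset survives, and this piece is exactly
\[r_{P_{n-t,t}\times \ol{P_{m-t,t}}}(S(\mt_{n,m,t}))\cong (\abs{n-t}{t}\otimes \abs{n-t}{-t})S(\mt_{n-t,m-t})\otimes (\abs{t}{t-n}\otimes \abs{t}{m-t})S(G_t).\]
Reading off the character twists on the $S(G_t)$-factor and invoking the identification $\ho(S(G_t),\pi\otimes \sigma^\lor)\cong \ho(\pi,\sigma)$, non-vanishing of the restriction of $\Phi$ forces the claimed isomorphism $\as^{(m-n)/2}\pi_2\cong \tau_2$ through the central-character calculation. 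The non-vanishing of the resulting map on $r_{P_{n-t,t}\times \ol{P_{m-t,t}}}(S(\mt_{n,m,t}))$ is then automatic, since it is precisely the surviving piece of $\Phi$. The main obstacle is the previous paragraph: systematically enumerating the Geometric Lemma subquotients and verifying in each non-generic case that the character twist genuinely triggers a reducedness contradiction. This is the bookkeeping carried out in the complex setting in \cite[\S 5.1, 5.3]{Dro25II}; modularly the only subtle point is that the induction and Jacquet functors remain exact and the Geometric Lemma applies, both of which hold because $\ell\nmid q$.
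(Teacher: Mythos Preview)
Your proposal is correct and follows exactly the approach the paper invokes: the paper does not supply its own proof here but simply cites \cite[\S 5.1, 5.3]{Dro25II} and asserts that the complex-case argument (Frobenius reciprocity, rank filtration, Geometric Lemma, elimination of non-generic pieces via the reducedness hypothesis) carries over verbatim to the $\ell$-modular setting since the relevant functors remain exact. Your sketch is precisely that argument.
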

\begin{corollary}\label{T:reduc1}
    Let $\pi\in \irr_n,\,\pi'\in\irr_m$ and set $\rho\coloneq \as^{\frac{n-1}{2}},\rho'\coloneq \as^{\frac{m-1}{2}}$. Then
    \[\dim_\fl\ho(S(\mt_{n,m}),\pi\otimes\pi'^\lor)\neq 0\] only if \[\D_{\rho}(\pi)\as^{\frac{m-n}{2}}\cong \D_{\rho'}(\pi').\]
    In this case set $t\coloneq \deg(\D_\rho(\pi))$. We then have an equality
    \[\dim_\fl\ho(S(\mt_{n,m}),\pi\otimes\pi'^\lor)=\dim_\fl\ho(S(\mt_{n-t,m-t}),\as^{-\frac{t}{2}}\rho(\pi)\otimes\as^{\frac{t}{2}}{\rho'}(\pi')^\lor).\]
\end{corollary}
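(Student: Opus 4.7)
My approach would be to deduce the corollary from Theorem~\ref{T:reduc3} by packaging the decomposition provided by Lemma~\ref{L:reduced}. First, I would apply Lemma~\ref{L:reduced} to embed $\pi\hookrightarrow \rho(\pi)\times \D_\rho(\pi)$, which expresses $\pi$ as a subrepresentation with $\D_\rho(\pi)$ strongly $\rho$-reduced, hence $\rho$-reduced in the sense needed for Theorem~\ref{T:reduc3}. To get a matching embedding $\pi'^\lor\hookrightarrow \tau_1^\lor\oltimes \tau_2^\lor$ on the $\pi'^\lor$-side, I would dualize the analogous decomposition for $\pi'$: starting from $\pi'\hookrightarrow \rho'(\pi')\times\D_{\rho'}(\pi')$ and passing to contragredients converts $\times$ into $\oltimes$, producing the required embedding with $\tau_2\cong \D_{\rho'}(\pi')$ playing the role of the $\rho'$-reduced factor. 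Composing any non-zero $S(\mt_{n,m})\to \pi\otimes\pi'^\lor$ with these embeddings puts us in the hypothesis of Theorem~\ref{T:reduc3}, which immediately yields $t\coloneq \deg(\D_\rho(\pi))=\deg(\D_{\rho'}(\pi'))$ and the isomorphism $\as^{(m-n)/2}\D_\rho(\pi)\cong \D_{\rho'}(\pi')^\lor$.

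For the dimension equality, I would use the second conclusion of Theorem~\ref{T:reduc3}: the morphism factors through $r_{P_{n-t,t}\times\ol{P_{m-t,t}}}(S(\mt_{n,m,t}))$ via Frobenius reciprocity. Plugging in the explicit formula for this Jacquet module as a twisted tensor product of $S(\mt_{n-t,m-t})$ and $S(G_t)$, the Hom space breaks into two factors. The $S(G_t)$ factor pairs with $\D_\rho(\pi)\otimes \D_{\rho'}(\pi')^\lor$, which by the established isomorphism is $\D_\rho(\pi)\otimes \as^{-(m-n)/2}\D_\rho(\pi)^\lor$; the corresponding $G_t\times G_t$-equivariant Hom from $S(G_t)$ is one-dimensional, spanned by the matrix coefficient map. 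The remaining factor is an intertwining space from $S(\mt_{n-t,m-t})$ into $\rho(\pi)\otimes \rho'(\pi')^\lor$ twisted by the characters from $\xi_{t,n,m}$; collecting the exponents produces precisely the normalizations $\as^{-t/2}$ and $\as^{t/2}$ in the target.

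The step requiring the most care is establishing that the dimensions agree, not merely an inequality. The subtle point is that the embedding $\pi\hookrightarrow \rho(\pi)\times \D_\rho(\pi)$ from Lemma~\ref{L:reduced} is uniquely characterized, as is the dual one for $\pi'^\lor$, so any morphism $S(\mt_{n,m})\to\pi\otimes\pi'^\lor$ admits a canonical lift to a morphism into $(\rho(\pi)\times\D_\rho(\pi))\otimes(\D_{\rho'}(\pi')^\lor\oltimes\rho'(\pi')^\lor)$, and conversely every morphism into the larger target lands inside $\pi\otimes\pi'^\lor$ because of the socle/cosocle properties. Combined with the one-dimensionality of the matrix-coefficient Hom on $S(G_t)$, this yields the desired bijection between Hom spaces and hence the equality of dimensions. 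The main obstacle I would expect is the careful bookkeeping of the $\as^{(n-t)/2}$-type characters appearing in $\xi_{t,n,m}$ and verifying that, after accounting for the matrix-coefficient pairing on the $S(G_t)$ factor, they recombine into exactly the advertised twists $\as^{-t/2}$ and $\as^{t/2}$.
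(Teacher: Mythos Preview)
Your deduction of the necessary condition and of the inequality $\dim\ho(S(\mt_{n,m}),\pi\otimes\pi'^\lor)\le \dim\ho(S(\mt_{n-t,m-t}),\ldots)$ from Theorem~\ref{T:reduc3} is the same as the paper's and is correct: the non-vanishing on the open piece gives an injective linear map from the left Hom-space to the right one.

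The gap is in the reverse inequality. Your claim that ``every morphism into the larger target lands inside $\pi\otimes\pi'^\lor$ because of the socle/cosocle properties'' does not follow from anything established. Although $\pi$ is the socle of $\rho(\pi)\times\D_\rho(\pi)$ by Corollary~\ref{C:soc}, the source $S(\mt_{n,m})$ is far from semisimple, and there is no general reason a morphism from it into a representation must land in the socle. Similarly, even granting that claim, you would still need to identify $\ho(S(\mt_{n,m}),\text{big target})$ with the product of the small Hom and the one-dimensional matrix-coefficient Hom; Frobenius reciprocity gives you a Hom out of the full Jacquet module $r_P(S(\mt_{n,m}))$, not merely out of the open-piece summand $r_P(S(\mt_{n,m,t}))$, and the former is not obviously exhausted by the latter.

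The paper proves the reverse inequality constructively. Given $T\colon S(\mt_{n-t,m-t})\to\pi_1\otimes\tau_1$ (with the appropriate twists) and the matrix-coefficient map $S\colon S(G_t)\to\pi_2\otimes\tau_2$, it writes down an explicit integral formula producing a map $\id_P(T)\colon S(\mt_{n,m})\to\pi_1\times\pi_2\otimes\tau_1\times\tau_2$, and a companion $\id_{\ol P}(T)$ landing in $\pi_1\oltimes\pi_2\otimes\tau_1\oltimes\tau_2$. It then composes with the intertwining operators $M'\colon\tau_1\times\tau_2\to\tau_1\oltimes\tau_2$ and $M\colon\pi_1\oltimes\pi_2\to\pi_1\times\pi_2$, checks via an explicit integral that these compositions are non-zero (this uses $\Lambda=0$ and \Cref{T:PInt}(2)), and verifies that the two resulting maps agree. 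Since by Corollary~\ref{C:soc} the images of $M$ and $M'$ are exactly $\pi$ and $\pi'^\lor$, the common map lands in $\pi\otimes\pi'^\lor$. Finally, one recovers $T$ from the construction, so $T\mapsto(1\otimes M')\circ\id_P(T)$ is injective, giving the missing inequality. This use of intertwining operators to force the image into the socle is the substantive step your sketch omits.
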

\begin{proof}
    The condition of non-vanishing plus the upper bound of the left side by the right side is an immediate consequence of \Cref{T:reduc3}. We write $\pi_1={\rho}(\pi),\pi_2=\D_{\rho}(\pi), \tau_1={\rho'}(\pi')^\lor,\tau_2=\D_{\rho'}(\pi')^\lor$.
    To show the other direction let $T\colon (\abs{n-t}{t}\otimes \abs{n-t}{-t})S(\mt_{n-t,m-t})\ra \pi_1\otimes\tau_1$ be a non-zero morphism.
    We also fix a non-zero morphism $S\colon (\abs{t}{t-n}\otimes \abs{t}{m-t}S(G_t))\ra\pi_2\otimes\tau_2$, which is unique up to a scalar.
    Then there exists a morphism
    \[\id_P(T)\colon S(M_{n,m})\ra\pi_1\times\pi_2\otimes \tau_1\times \tau_2\] given by
    \[f\mapsto \left((g_1,g_2)\mapsto \int_{\mt_{n-t,t} }T\otimes S\left((m,g)\mapsto f\left(g_1^{-1}\Xi(u,0,m,g)g_2\right)\right)\, \mathrm{d}u\right).\]
We recall the intertwining operator
\[M_{\tau_2,\tau_1}\colon \tau_1\times\tau_2\ra\tau_2\times \tau_1\]
defined in \Cref{S:inter}.
As in the proof of \Cref{L:reduced}, we can show that $\Lambda(\tau_2,\tau_1)=0$. We now let $w$ be a representative of the open $P_{\deg(\tau_2),\deg(\tau_1)}\times P_{t,m-t}$-orbit on $G_n$ as in \Cref{S:inter}. Twisting $\tau_2\times \tau_1$ and $M'$ by conjugating with $w$, we obtain a morphism $M'\colon \tau_1\times\tau_2\ra\tau_1\oltimes\tau_2$, which by a natural analog of \Cref{C:soc} has $\pi'=\soc(\tau_2\times\tau_1)$ as its image.

Next we claim the composition $(1\otimes M')\circ\id_P(T)$ is non-zero. Since $\Lambda(\tau_2,\tau_1)=0$, it suffices by \Cref{T:PInt}(2)
 to show that the image of $\id_P(T)$ contains on the $G_m$-part at least one function whose support is contained in $P_{m-t,t}\ol{P_{m-t,t}}$ and whose integral over the unipotent part of $\ol{P_{m-t,t}}$ does not vanish.
But by construction of $\id_P(T)$, this is the same as computing
\[\int_{\mt_{m-t,t}\times \mt_{t,n-t}}(T\otimes S)\left(f(\Xi(u,v,m,g))\right)\, \mathrm{d}u\, \mathrm{d}v,\]
which can easily be seen not to vanish for suitable $f$. Namely, on the $\mt_{m-t,t}\times \mt_{t,n-t}$-part, one can take a compactly supported function $\phi$ such that the integral over $\mt_{m-t,t}\times \mt_{t,n-t}$ does not vanish and for the $G_t\times G_{m-t}$-part we can find $\phi'$ such that $T\otimes S(\phi')$ does not vanish. Then $f(\Xi(u,v,m,g))\coloneq \phi(u,v)\phi'(m,g)$ has the desired property.
Moreover, we see that the map obtained by Frobenius reciprocity 
\[r_{P_{n-t,t}\times \overline{P_{m-t,t}}}(S(\mt_{n,m,t}))\hra  r_{P_{n-t,t}\times \overline{P_{m-t,t}}}(S(\mt_{n,m}))\ra \pi_1\otimes\pi_2\otimes \tau_1\otimes\tau_2\]
does not vanish and after composing with the isomorphism we mentioned before \Cref{T:reduc3}, recovers the maps $T$ and $S$.
Moreover, the image of $(1\otimes M')\circ\id_P(T)$ is contained in \[\pi_1\times \pi_2\otimes \pi'^\lor\subseteq \pi_1\times\pi_2\otimes\tau_1\oltimes\tau_2.\]
But on the other hand, we can also consider the morphism
    \[\id_{\ol{P}}(T)\colon S(M_{n,m})\ra\pi_1\oltimes\pi_2\otimes \tau_1\oltimes \tau_2\] given by
    \[f\mapsto \left((g_1,g_2)\mapsto \int_{\mt_{t,n-t} }T\otimes S\left((m,g)\mapsto f\left(g_1^{-1}\Xi(0,v,m,g)g_2\right)\right)\, \mathrm{d}v\right).\]
As above, we twist the intertwining operator $M_{\pi_1,\pi_2}$ by a suitable element in the general linear group to obtain a map
\[M\colon\pi_1\oltimes\pi_2\ra\pi_1\times\pi_2.\]
We see analogously to above that 
\[(M\otimes 1)\circ \id_{\ol{P}}(T)\]
is non-zero, the map obtained by Frobenius reciprocity also does not vanish on \[r_{P_{n-t,t}\times \overline{P_{m-t,t}}}(S(\mt_{n,m,t}))\] and moreover agrees with the one obtained from the first construction.
Thus \[(M\otimes 1)\circ \id_{\ol{P}}(T)=(1\otimes M')\circ\id_P(T).\]
Finally, we obtain from \Cref{C:soc} that the image of $(M\otimes 1)\circ \id_{\ol{P}}(T)$ is contained in \[\pi\otimes \tau_1\oltimes\tau_2\subseteq \pi_1\times\pi_2\otimes\tau_1\oltimes\tau_2\] by the same argument. We conclude that it has to have image $\pi\otimes\pi'^\lor$. Finally, we also saw in the course of the construction how to recover $T$ and $S$, thus we obtained the desired lower bound.
\end{proof}
\subsection{Fourier-theoretic aspects}\label{S:fourier}
Recall the Fourier transform
\[\hat{(-)}\colon S(\mt_{n,m})\ra S(\mt_{n,m}).\] In order to define it, we fix an auxiliary smooth additive character $\psi\colon \Ff\ra\fl$ and a Haar-measure $\mathrm{d} x$ on $\mt_{n,m}$. Define then
\[\hat{f}(y)=\int_{\mt_{n,m}}f(x)\psi(\mathrm{tr}(x^ty))\,\mathrm{d} x.\]
By a change of variables, the Fourier-transform gives for any $\pi\in\irr_n,\,\pi'\in\irr_m$ the following equality.
\begin{prop}\label{T:reduc2}
Let $\pi_1\in\irr_n,\pi_2\in\irr_m$. Then
\[\dim_\fl\ho(S(\mt_{n,m}),\pi_1\otimes\pi_2)=\dim_\fl\ho(S(\mt_{n,m}),(\as^{m}\otimes \as^{-n})\pi_1\cc\otimes \pi_2\cc).\]
\end{prop}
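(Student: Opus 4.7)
The plan is to build an explicit linear bijection between the two Hom-spaces using the Fourier transform $\mathcal{F}\colon S(\mt_{n,m})\to S(\mt_{n,m})$ together with a change-of-variables computation. The key step is to read off how the $G_n\times G_m$-action is twisted under $\mathcal{F}$; that twist then migrates to the target via the familiar trick of rewriting the equivariance condition.

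First I would compute the intertwining formula for $\mathcal{F}$. For $(g_1,g_2)\in G_n\times G_m$ acting by $((g_1,g_2)\cdot f)(x)=f(g_1^{-1}xg_2)$, I would substitute $u=g_1^{-1}xg_2$ in the defining integral of $\widehat{(g_1,g_2)\cdot f}(y)$. The linear map $u\mapsto g_1 u g_2^{-1}$ on $\mt_{n,m}$ has Jacobian $\lv{\det g_1}^{m}\lv{\det g_2}^{-n}$, since left multiplication by $g_1$ acts as $g_1$ on each of the $m$ columns and right multiplication by $g_2^{-1}$ as $g_2^{-1}$ on each of the $n$ rows. Combined with the identity $\mathrm{tr}((g_1 u g_2^{-1})^t y)=\mathrm{tr}(u^t\,{}^tg_1\, y\,{}^tg_2^{-1})$, this yields
\[\widehat{(g_1,g_2)\cdot f}(y)=\lv{\det g_1}^{m}\lv{\det g_2}^{-n}\,\hat f({}^tg_1\, y\,{}^tg_2^{-1}),\]
and the transformation $y\mapsto {}^tg_1 y\,{}^tg_2^{-1}$ is precisely the standard action of $({}^tg_1^{-1},{}^tg_2^{-1})$ on $\hat f$.

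Next, for $\Phi\in\ho(S(\mt_{n,m}),\pi_1\otimes\pi_2)$ I would set $\Phi'\coloneq\Phi\circ\mathcal{F}^{-1}$. Unwinding the equivariance of $\Phi$ through the formula above and performing the substitution $(g_1,g_2)\mapsto({}^tg_1^{-1},{}^tg_2^{-1})$ in the resulting identity shows that $\Phi'$ is $G_n\times G_m$-equivariant for the target representation $(\as^{m}\pi_1\cc)\otimes(\as^{-n}\pi_2\cc)$. Since $\mathcal{F}$ is a $\fl$-linear bijection by Fourier inversion, so is $\Phi\mapsto\Phi'$, which yields the claimed equality of dimensions.

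The main (and only real) obstacle here is bookkeeping: one must verify the Jacobian computation and carefully track the interaction between the character twist $\lv{\det g_i}^{\pm}$ and the involution $g\mapsto{}^tg^{-1}$ so that the exponents $m$ and $n$ end up on the correct factors. No further representation-theoretic input beyond the definition of $(-)\cc$ is needed.
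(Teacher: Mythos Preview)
Your proposal is correct and is exactly the argument the paper has in mind: the paper's entire proof is the single sentence ``By a change of variables, the Fourier-transform gives \ldots'', and your computation is the standard unpacking of that sentence. The Jacobian/trace calculation and the resulting equivariance twist $(\as^{m}\otimes\as^{-n})\,\pi_1\cc\otimes\pi_2\cc$ are right, and Fourier inversion on $S(\mt_{n,m})$ over $\fl$ is available because $q$ is assumed invertible in $\fl$.
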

As a last piece of the puzzle, we construct the following maps. Recall that $d$ is the order of $q$ in $\fl$.
Assume $n=dn',m=dm'$ and let $r\le n'$.
Let $\alpha$ be the partition $(d,\ldots,d)$ of $dr$.
We consider the closed subspace
\[\mathfrak{P}_r=\left\{\begin{pmatrix}
    u&m\\0&0
\end{pmatrix}:u\in U_{\alpha}, m\in \mt_{dr,m-dr}\right\}.\]
We then define the maps
\[T_r\colon S(\mt_{n,m})\ra\id_{P_{\alpha,n-dr}}^{G_n}(\fo_{M_{\alpha,n-dr}})\otimes \id_{P_{\alpha,m-dr}}^{G_m}(\fo_{M_{\alpha,m-dr}})\]
by \[f\mapsto ((g_1,g_2)\mapsto\int_{\mathfrak{P}_r}f\left(g_1^{-1}\begin{pmatrix}
    u&m\\0&0
\end{pmatrix}g_2\right)\,\mathrm{d}u\,\mathrm{d}m)\]
for some Haar measures $\mathrm{d}u\,\mathrm{d}m$. It follows from the definition that 
$T_r$ vanishes on $S(\mt_{n,m}^{\ge r+1})$ and induces on
$S(\mt_{n,m}^{r})$ a map with image 
\[\id_{P_{n-rd,rd}\times \overline{P_{m-rd,rd}}}^{G_n\times G_m}(\fo_{n-rd}\otimes \fo_{m-rd}\otimes \Sigma_{rd}(\fo_d^{\times r})),\]
see also \Cref{T:sec4}
\subsection{Combining the approaches}\label{S:final}
We define a map
\[\sk\colon \irr_n\times\irr_m\ra \pa^\ell\times \pa^\ell\] uniquely defined by the following properties.
 Let $\pi\in\irr_n,\pi'\in\irr_m$ and $\rho=\as^{\frac{n-1}{2}},\,\rho'=\as^{\frac{m-1}{2}}$, $t=\deg(\D_\rho(\pi))$.
 Recall that in \Cref{S:move} we associate to an $\ell$-restricted partition $\lambda$ a multisegment $\fm_\rho(\lambda)\in\Ms_\rho(\fm)$.
\begin{enumerate}
    \item  If $\pi\cong \Z(\fm_\rho(\lambda))$, $\pi'\cong\Z(\fm_{\rho'}(\mu))^\lor$: \[\sk(\pi,\pi')= \begin{cases}
        (\lambda,\mu)&\text{if }n=m\mod d,\\0&\text{otherwise}.
    \end{cases}\]
    \item If $\D_{\rho}(\pi)\neq 0$ or $\D_{\rho'}(\pi'^\lor)\neq 0$ and $\as^{\frac{m-n}{2}}\D_{\rho'}(\pi)\cong \D_{\rho'}(\pi'^\lor)$: \[\sk(\pi,\pi')= \sk(\as^{-\frac{t}{2}}\rho(\pi),\as^{\frac{t}{2}}\rho'(\pi'^\lor)^\lor).\]
    \item If $\D_{\rho}(\pi)\neq 0$ or $\D_{\rho'}(\pi'^\lor)\neq 0$ and $\as^{\frac{m-n}{2}}\D_{\rho'}(\pi)\ncong \D_{\rho'}(\pi'^\lor)$: \[\sk(\pi,\pi')=\emptyset.\]
    \item If $\ell\neq 2$, $\pi$ is $\rho$-saturated and $\pi'^\lor$ is $\rho'$-saturated: \[\sk(\pi,\pi')= \sk(\as^{m}\pi^\lor, \as^{-n}\pi'^\lor).\]
\end{enumerate}
The above properties define $\sk$ uniquely. Indeed, using first (2) and (3), it suffices to show uniqueness in the case where $\pi$ is $\rho$-saturated and $\pi'^\lor$ is $\rho'$-saturated. Using (4), and then again (2) and (3), one then reduces to the case covered by (1).
\begin{theorem}
    Let $\pi\in\irr_n,\pi'\in \irr_m$. Then
    \[\dim_\fl\ho(S(\mt_{n,m}),\pi\otimes\pi')=d_{\sk(\pi,\pi')}.\]
\end{theorem}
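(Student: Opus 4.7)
The natural strategy is induction on $n+m$, following the three-step recursive definition of $\sk$. For Step~1, Corollary~\ref{T:reduc1} reduces $\dim_\fl \ho(S(\mt_{n,m}), \pi \otimes \pi')$ either to zero (when the compatibility $\as^{(m-n)/2}\D_{\rho}(\pi) \cong \D_{\rho'}(\pi'^\lor)^\lor$ fails) or to the same Hom-dimension with $n$ and $m$ replaced by $n-t$ and $m-t$, where $t = \deg \D_\rho(\pi)$. Both outcomes match the recursive definition of $\sk$ in Step~1, and since $n-t+m-t < n+m$ we may apply the inductive hypothesis. For Step~2, Proposition~\ref{T:reduc2} directly equates $\dim_\fl \ho(S(\mt_{n,m}), \pi \otimes \pi')$ with $\dim_\fl \ho(S(\mt_{n,m}), \as^{m}\pi\cc \otimes \as^{-n}\pi'\cc)$; after applying this Fourier twist we either land in the situation of Step~1 (and recurse via the previous case) or enter the base case Step~3.

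The heart of the proof is therefore Step~3. Here $\pi = \Z(\fm)$ and $\pi' = \Z(\fn)$ have all segments aligned so that their lengths are multiples of $d$. If $n \not\equiv m \pmod{d}$, examination of central characters forces the Hom-space to vanish, matching $d_{\sk(\pi,\pi')} = 0$. Otherwise, after a mild twist, $\pi$ and $\pi'$ become isomorphic to subrepresentations $\Z(\lambda) \hookrightarrow \Pi_{n/d}$ and $\Z(\mu) \hookrightarrow \Pi_{m/d}$ in the sense of Section~\ref{S:move}. We use the rank filtration
\[0 \subseteq S(\mt_{n,m}^{\ge \min(n,m)}) \subseteq \ldots \subseteq S(\mt_{n,m}^{\ge 0}) = S(\mt_{n,m}),\]
whose graded pieces are the representations $\sigma^r_{n,m}$. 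Applying $\ho(-, \pi \otimes \pi')$, the only ranks contributing are those of the form $r = dp$ (by cuspidal-support considerations), and Theorem~\ref{T:sec4} identifies the contribution at $r = dp$ precisely with $d^{p}_{\lambda,\mu}$, giving the upper bound $\sum_p d^{p}_{\lambda,\mu} = d_{\lambda,\mu}$.

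The lower bound, which I expect to be the main obstacle, requires exhibiting enough linearly independent morphisms $S(\mt_{n,m}) \to \pi \otimes \pi'$. For each admissible $p$ and each nonzero element of $\ho(\fl[\pp_{n/d,m/d}^{p}], D_\lambda\, {}_L\otimes_R D_\mu)$, the map $T_p$ from Subsection~\ref{S:fourier} produces a morphism from $S(\mt_{n,m})$ to a tensor product of two parabolically induced trivial representations; composing with the projection to $\pi \otimes \pi'$ furnished by the bijection in Corollary~\ref{C:rescom} yields the desired morphism. Linear independence across distinct values of $p$ follows from the key property that $T_p$ annihilates $S(\mt_{n,m}^{\ge dp+1})$ but induces a nonzero map on $S(\mt_{n,m}^{dp})$, so the resulting morphisms are supported in different strata of the associated graded of the rank filtration; within a fixed $p$, linear independence is immediate from the identification in Theorem~\ref{T:sec4}. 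Matching the upper and lower bounds completes the inductive step.
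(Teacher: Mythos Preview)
Your proposal follows essentially the same route as the paper's own proof: the reduction via Corollary~\ref{T:reduc1} (Step~1), the Fourier twist via Proposition~\ref{T:reduc2} (Step~2), and the base case handled by the rank filtration, Theorem~\ref{T:sec4}, and the maps $T_p$ of Section~\ref{S:fourier} (Step~3). Your account is in fact more detailed than the paper's, which is quite terse and simply cites these ingredients.

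Two small remarks. First, the phrase ``examination of central characters'' for the vanishing when $n\not\equiv m\pmod d$ is not quite the right mechanism; the paper deduces this instead from the rank filtration: a nonzero map forces $\sigma_{n,m}^r\to\pi\otimes\pi'$ for some $r$, and Frobenius reciprocity then pins down $b_\rho(\De)$ for $\De\in\fm$ and $e_{\rho'^\lor}(\Gamma)$ for $\Gamma\in\fn$ in terms of $r$, which combined with the Step~3 constraints yields $n\equiv m\pmod d$. Second, for the upper bound you should note that $\sigma_{n,m}^r$ carries the twist $\xi_{r,n,m}$ and uses $\overline{P}$ on the $G_m$ side, whereas Theorem~\ref{T:sec4} is stated without twist and with $P$; the Step~3 normalizations absorb this discrepancy, but it is worth saying so explicitly. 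Neither point affects the correctness of your overall strategy.
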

\begin{proof}
    The theorem follows from \Cref{T:reduc1}, \Cref{T:reduc2}, \Cref{T:sec4} and the maps $T_r$ constructed in \Cref{S:fourier}. The only non-trivial part is to see that in the last step the non-vanishing of the Hom-space implies $m=n\mod r$. This can be seen as follows. If the Hom-space is non-zero there exists $r$ with $\sigma_{n,m}^r\ra\pi\otimes \pi'$. This implies, in the language used in the description above, that for every segment $\De\in \fm$ we have $b_{\rho}(\De)=r-n+1$ and also for every segment $\Gamma\in \fn$ we have $e_{\rho'^\lor}(\Gamma)=m-r-1$. But then $m+1-n=r-n+1\mod d$ and $n-1-m=m-r-1\mod d$ and hence $m=n\mod d$.
\end{proof}
    Let us also remark on some properties of $\sk$ that are evident from the above construction.
\begin{enumerate}
    \item Assume that $\pi_1,\pi_2\in\irr_m$ such that the first components of $\sk(\pi,\pi_1),\, \sk(\pi,\pi_2)$ are different from $\emptyset$. Then the first components agree, and we will denote them by $\sk_m(\pi)$. If no $\pi'\in\irr_m$ exists such that the first component of $\sk(\pi,\pi')$ is different fromf $\emptyset$, we set $\sk_m(\pi)=\emptyset$. 
    \item If $m=m'\mod d$, $\sk_m(\pi)=\sk_{m'}(\pi)$.
    \item If $\pi$ is a subrepresentation of 
    $\fo_d^{\times n}$, and hence of the form $\Z(\fm(\lambda))$ for $\lambda\in \pa_n^\ell$, $\sk_m(\pi)=\lambda$ for $m=n\mod d$.
\end{enumerate}
\bibliographystyle{abbrv}
\bibliography{References.bib}
\end{document}